\documentclass[10pt,a4paper]{article}
\usepackage{typearea}
\usepackage{amsfonts}
\usepackage{amssymb}
\usepackage{amsthm}
\usepackage{amscd}
\usepackage[centertags]{amsmath}
\usepackage{eucal}
\usepackage{epsfig}
\usepackage[matrix,arrow,curve]{xy}
\usepackage[small,nohug,heads=vee]{diagrams}
\usepackage{xr}
\externaldocument{hamtheta}
\usepackage{comment}
\diagramstyle[labelstyle=\scriptstyle]
\CompileMatrices
\typearea{12}
\author{Andreas Klein}
\title{Hamiltonian spectral invariants, symplectic spinors and Frobenius structures II}


\newcommand{\R}{\mathbb{R}}

\newcommand{\Z}{\mathbb{Z}}

\newcommand{\C}{\mathbb{C}}
\newcommand{\N}{\mathbb{N}}
\newcommand{\PI}{\mathbb{P}}



\newtheorem{theorem}{Theorem}[section]
\newtheorem{Def}[theorem]{Definition}
\newtheorem{DefLem}[theorem]{Definition/Lemma}
\newtheorem{prop}[theorem]{Proposition}
\newtheorem{lemma}[theorem]{Lemma}
\newtheorem{folg}[theorem]{Corollary}
\newtheorem{propdef}[theorem]{Proposition/Definition}

\newtheorem{asslemma}[theorem]{Assumption/Lemma}

\setlength{\parindent}{0pt}
\setlength{\parskip}{5pt plus 2pt minus 1pt}
\sloppy
\begin{document}
\maketitle

\begin{abstract} In this article, we continue our study of the previously \cite{kleinham} introduced concept of 'Frobenius structures' and symplectic spectral invariants in the context of symplectic spinors. By studying mainly the case of $C^1$-small Hamiltonian mappings on symplectic manifolds $M$ admitting a metaplectic structure and a parallel $\hat O(n)$-reduction of its metaplectic frame bundle we derive how the construction of 'singularly rigid' resp. 'self-dual' pairs of irreducible Frobenius structures associated to this Hamiltonian mapping $\Phi$ leads to a Hopf-module structure on the set of irreducible Frobenius structures, which we label 'Dihedral Lagrangian Hopf module'. The spectral cover of the self-dual irreducible Frobenius structure in question here realizes the graph of $\Phi$. We then generalize this specific construction of a distinguished 'dual pair' and define abstractly conditions under which 'dual pairs' associated to a given $C^1$-small Hamiltonian mapping emerge, these dual pairs are essentially pairs $(s_1, J_1), (s_2, J_2)$ of closed sections of the cotangent bundle $T^*M$ (where only $s_1$ is assumed to be $C^1$-small) and (in general singular) compatible almost complex structures on $M$ satisfying certain integrability conditions involving a certain notion of Koszul bracket, connecting different levels of the Taylor expansions of the sections $(s_1, s_2)$. In the second part of this paper, we translate these characterizing conditions for general 'dual pairs' of Frobenius structures associated to a $C^1$-small Hamiltonian system into the notion of {\it matrix factorization}. We propose an algebraic setting involving modules over certain fractional ideals of function rings on $M$ in which we prove that the set of 'dual pairs' in the above sense and the set of matrix factorizations associated to these modules stand in bijective relation. We prove, in the real-analytic case, a Riemann Roch-type theorem relating a certain Euler characteristic arising from a given matrix factorization in the above sense to (integral) cohomological data on $M$ using Cheeger-Simons-type differential characters, derived from a given pair $(s_1, J_1), (s_2, J_2)$. We propose extensions of these techniques to the case of 'geodesic convexity-smallness' of $\Phi$ and to the case of general Hamiltonian systems on $M$.

\end{abstract}

\tableofcontents

\section{Interlude}\label{interlude}

In this interlude we will discuss several concepts in an (in part) slightly informal way that are of importance in this and subsequent papers (\cite{kleinham3}, \cite{kleinlag}, \cite{klein4}).

\subsection{Semiclassics, functoriality and Frobenius structures}\label{semic}
In this section we will briefly describe several (relatively simple) examples of phenomena that can be understood as functorial relations between certain categories of Frobenius structures, that is the category with objects (in general weak, possibly non-standard) Frobenius structures $(M, \Omega, \mathcal{L}, \omega)$ in the sense of Definition \ref{frobenius} on a fixed symplectic manifold $(M,\omega)$ and with morphisms being defined as symplectic diffeomorphisms of $(M, \omega)$ that are covered by homomorphisms of the family of representations of commutative algebras $\mathcal{A}\subset {\rm End}(\mathcal{L})$ associated to the respective $(\Omega, \mathcal{L})$, arise in the context of semiclassical quantization and symplectic fixed point problems. The semiclassical viewpoint will be discussed in more detail in a subsequent paper (\cite{klein4}), the categorical viewpoint will be examined more closely in joint work with S. Krysl.
\subsubsection{Hamiltonian diffeomorphisms $C^1$-close to the identity and (self-)duality}
Let $(M, \omega)$ be a symplectic manfold of dimension $2n$ and fix a compatible almost complex structure $J$ on $TM$ and assume in the following that we have chosen a symplectic connection satisfying $\nabla J=0$ (note that we do not require $\nabla$ to be torsion-free, cf. Section \ref{spinorsconn}). Consider the symplectic manifold $(\hat M, \hat \omega)=(M\times M, (-\omega)\oplus \omega)$ together with the compatible almost complex structure $\hat J=(-J \oplus J)$. Then as is well-known (\cite{salamon}) a neighbourhood $\mathcal{N}(\Delta)$ of the diagonal $\Delta \subset M\times M$ (being Lagrangian in $\hat M$) is symplectomorphic to a neighbourhood $\mathcal{U}_0$ of the zero section $M$ of the cotangent bundle $T^*M$, the latter equipped with the standard symplectic form $\Omega_0=d\lambda$, $\lambda$ being the canonical $1$-form on $T^*M$. Note that the symplectomorphism $\phi: \mathcal{U}_0 \rightarrow \mathcal{N}(\Delta)$ is essentially determined by $J$, so
\begin{equation}\label{trivdiag}
\phi_p: (\mathcal{U}_0)_p\subset T^*_pM \rightarrow \mathcal{N}(\Delta), \ \alpha_p \mapsto {\rm exp}_{p}(\alpha_p^{*}),\ p\in M,
\end{equation}
where ${\rm exp}:T\hat M \rightarrow \hat M$ is the exponential map associated to $(\hat \omega, \hat J)$ and $(\cdot)^*:T^*M \rightarrow TM$ is the duality given by $\omega$, note that $TM\simeq T\Delta$. Assume now we have a symplectic diffeomorphism $\Phi:M \rightarrow M$ that is $C^1$-close to the identity, then its graph ${\rm gr}_{\Phi}\subset \mathcal{N}(\Delta)\subset M \times M$ is Lagrangian wrt $\hat \omega$ and its preimage under $\phi$ is a Lagangian submanifold $l=\phi^{-1}({\rm gr}_{\Phi})\subset \mathcal{U}_0\subset T^*M$ that intersects the zero section $M\subset T^*M$ exactly at the fixed points of $\Phi$. Since $l$ is $C^1$-close to the zero section $M$, it is the graph of a closed one form, so a section $s_l: M\rightarrow T^*M$ satisfying $ds_l=0$ and we are thus in the situation of Theorem \ref{genclass}, that is assuming that $c_1(M)= 0\ {\rm mod}\ 2$ allowing the choice of a metaplectic structure $P$ on $M$ and in addition assuming the reducibility of $P$ to $\hat O(n)$ (note that this condition is not neccessary, cf. Theorem \ref{genclass}), for instance by the presence of a $\nabla$-invariant Lagrangian polarization of $TM$ on $M$ (cf. Proposition \ref{higgs}) we can associate a (standard, in general singular, in general weak and non-rigid, unless $\Phi$ is Hamiltonian, cf. below) Frobenius structure $(\Omega, \mathcal{L}, \omega_0)$ over $M$ in the sense of Definition \ref{frobenius} to the symplectomorphism $\Phi:M \rightarrow M$ being $C^1$-close to the identity, that is with the notation of Proposition \ref{higgs} resp. the proof of Theorem \ref{genclass} we have for the pair $(\mathcal{L},\mathcal{A})$ of complex lines and  commutative algebras over $M$
\[
(\mathcal{L},\mathcal{A})= P^J_{L,s_l}\times_{G_L^0,\tilde \mu_2}\mathcal{A}^0_2,
\]
where $\mathcal{A}^0_2=(\C\cdot f_{0,iI}, \mathcal{A}_2(\R^{2n}, iI))$ with the notation of Section \ref{coherent}, $s_l$, together with the given $\hat O(n)$-reduction $P^J_L$ of $P$, determines the implicit section $\hat s_l:M\rightarrow P_{G/G_L^0}\simeq (\pi_P^*(T^*M) \times_{Mp(2n, \R)} P)/G_L^0$ by setting $\hat s_l(x)= ((p,\tilde s_l), p).G_L^0$ for $x \in M$ if $s_l(x)=(p, \tilde s_l(x)),\ \tilde s_l(x) \in \R^{2n},\ p\in P$, $G^0_L=\{0\}\times \hat U_L(n)$, where $\hat U_L(n)=\hat U(n)\cap \hat {\mathfrak{P}}_L\simeq \hat O(n)$ and $\hat {\mathfrak{P}}_L$ is the preimage under $\rho:Mp(2n, \R)\rightarrow Sp(2n, \R)$ of the maximally parabolic subgroup of $Sp(2n,R)$ defined by $L=\R^n\times \{0\}$. Recall also that $G=H_n\times_\rho  Mp(2n, \R)$, $\hat P_{G/G^0_L}\simeq P_G/G_L^0$ and $P_G$ is as defined in (\ref{tangent})) ($P_G\simeq \hat P_G=P\times_{Mp(2n, \R), {\rm Ad}} G$) and we have chosen a metaplectic structure $P$ over $(\hat M, \hat \omega)$. $\hat P_G$ over $M$ is then by $s_l$ reduced to $G_L^0\simeq \hat O(n)$ which is denoted by $P^J_{L,s_l}$, thus the $\hat O(n)$-reduction $P^J_{L,s_l}$ of the $G$-bundle $\hat P_G$ is fixed by $\hat s_l$, the given almost complex structure $J$ on $T^*M$ and the $\nabla$-invariant polarization of $TM$ over $M$. Note that the above constructions, that is $\mathcal{L}$ and the section $s_l:M \rightarrow T^*M$, depend not only on the symplectomorphism $\Phi$ but intrinsically on the initially chosen almost complex structure $J$. Note also that alternatively in the sense of the discussion below Theorem \ref{genclass}, we can understand $\mathcal{L}$ as the image of the global section of the bundle $\mathcal{E}_{{\rm Gr}_1(\mathcal{W})}=P^J_L\times_{G_L^0, {\rm ev}_1\circ \tilde \mu_2\circ (i, i_{\mathcal{W}})}{\rm Gr}_1(\mathcal{W})$ given by $s_l$ as described in the proof of Theorem \ref{genclass}, where the implicit embedding is here $i_{\mathcal{W}}:{\rm Gr}_1(\mathcal{W})\rightarrow \mathcal{A}_2$. Note that $P^J_L$ here is the standard reduction of $P_G\simeq \hat P_G$ to $G_L^0=\{0\}\times_\rho \hat U(n)_L$. Note also the difference of the above construction to that exhibited below Corollary \ref{curvature} of associating a Frobenius structure to a closed (thus Lagrangian) section $l:N \rightarrow T^*N=M$, where $N$ is a general $n$-dimensional manifold, which required an embedding of ${\rm im}(l)=\mathcal{L}$ into $T^*M$ and extending the associated section of $T^*M$ to a map $\tilde l: U\subset M \rightarrow T^*M|U$, where $U$ is a neighbourhood of the zero section $N$ in $T^*N=M$.\\

Assume now that the symplectomorphism $\Phi:M\rightarrow M$ is not only $C^1$-close to the identity, but also Hamiltonian, that is, it is the time-$1$-map of an (eventually time dependent) Hamiltonian function $H:M\times [0,1]\rightarrow \R$. We want to discuss whether $(\Omega, \mathcal{L}, \omega_0)$ is (weakly) rigid and self-dual in the sense of 4. resp. 6. of Definition \ref{frobenius} given this condition. Analogously, we want to examine whether the Frobenius structure $(\Omega_l, \mathcal{L}_l, \omega_0)$ associated to a closed and {\it exact} Lagrangian section $l:N \rightarrow T^*N=M$ as described above resp. below Corollary \ref{curvature}, after restriction of the implicit map $\tilde l: U\subset M \rightarrow T^*M|U$, resp. the associated map $\hat s_{\tilde l}:U\subset M\rightarrow P_{G/G_L^0}$ to $i_N:N \subset U\subset M=T^*N$, is weakly rigid and self-dual. We will see that the answer to these questions is closely related to Kirillov's method that was treated in \cite{kirillov1}, \cite{kirillov2}. Note that in both cases discussed above, there exists an (essentially uniquely determined) generating function $S:M \rightarrow \R$ with the property that $s_l=dS$ on $M$. We will see below that the set of generating functions as associated to Hamiltonian diffeomorphisms and certain $1$-forms on the product $M\times M$ stands in close relationship to the set of 'self-dual' Frobenius structures associated to a Hamiltonian diffeomorphism (being $C^1$-close to the identity). In our special cases, any of these two above Frobenius structures, given the additional exactness of the implicit Lagrangian section $s_l: M\rightarrow T^*M$, is self-dual (and thus there exist sections that satisfy a 'time dependent Schroedinger equation', where the 'time parameter' here is the parameter space $M$) after restriction of $(\Omega, \mathcal{L})$ resp. $(\Omega_l, \mathcal{L}_l)$ to a Lagrangian submanifold $L\subset M$ which plays in some sense the role of a polarization in Kirillov's theory, while the generating function $S$ determines the 'phase'. In the case of the exact Lagrangian section $l:N \rightarrow T^*N=M$ of the example discussed below Corollary \ref{curvature}, this Lagrangian submanifold is of course tautologically given by $N$, in the case of the Lagrangian section $s_l: M\rightarrow T^*M$ associated to an exact Hamiltonian diffeomorphism (which is always $C^1$-close to the identity, for the time being), there is no such Lagrangian submanifold canonically given, which leads to the neccessity of the construction of certain notions of duality. To be more precise, the triple $(\Omega, \mathcal{L}, \omega_0)$ associated to the Hamiltonian diffeomorphism $\Phi$ as described above is not rigid, but a 'time dependent Schroedinger equation' characterizing 'dual pairs' and thus implying rigidity in the sense of Definition \ref{frobenius} will be satisfied if we tensor the former with a 'dual' irreducible standard Frobenius structure $(\hat \Omega, \hat {\mathcal{L}}, \omega_0)$ to be described below, while 'self-duality' is achieved by taking the tensor product of the latter two and a further pair of irreducible standard Frobenius structures induced by the involuation that twistes the factors in $M\times M$. Note that for our choice of $S$ and $\mathcal{L}$ in the case of a $C^1$-small Hamiltonian as above, the line bundles $\mathcal{L}$ and $\hat {\mathcal{L}}$ are essentially complex-conjugates to another, while below we will construct more complicated examples of (self-)'duality' using Kirillov's theory. Thus, to describe $\hat {\mathcal{L}}$ in the given case of a $C^1$-small Hamiltonian diffeomorphism more closely, consider the Frechet-dual $\mathcal{Q}'$ of the symplectic spinor bundle $\mathcal{Q}$ over $M$ (cf. Section \ref{spinorsconn}) associated to a given metaplectic structure $P$ on $M$ and note that $\mathcal{L} \subset \mathcal{Q}$ is $1$-dimensional and locally given by elements of the form $\C \cdot f_{h, T},\ h \in \R^{2n},\ T \in \mathfrak{h}$, that is by maps $s: \hat P \rightarrow G/\tilde G$, $\tilde G=G_L^0\subset \hat U(n)$ where $\hat P$ is the $G$-extension of $P$ defined (\ref{reduction}) inducing the map $\hat s:\hat P \rightarrow  \mathcal{A}_2$ defined in (\ref{equiv}). The $L^2$ inner product on $Q$ then associates canonically a one-dimensional subspace $\mathcal{L}'\subset \mathcal{Q}'$ to $\mathcal{L}$ spanned locally on open sets $U\subset M$ by maps of the type $g \mapsto <g,\C \cdot f_{h, T}>,\ g \in \mathcal{S}(\mathbb{R}^{n})$ so that locally $\hat {\rm pr}_1\circ \hat s(x) \in \C \cdot f_{h(x), T(x)}, x \in U$. Then $\mathcal{L}'$ we define as our candidate for the 'dual' $\hat {\mathcal{L}}$. Of course the Frobenius multiplication $\Omega:TM\rightarrow {\rm End}(\hat {\mathcal{L}})$ is then just given by
\[
(\hat \Omega(X)(\hat f))(g)=\hat f(\Omega^*(X)g), \ g \in \mathcal{S}(\mathbb{R}^{n}),\ \hat f \in \mathcal{L}',
\]
where $\Omega^*=A_1-i A_2$ if we decompose $\Omega$ in the sense of Definition \ref{frobenius} ($A_1, A_2$ are formally self-adjoint acting on $L^2(\R^n)$). We will call irreducible Frobenius structures $\mathcal{L}'\subset \mathcal{Q}'$ arising in this sense still (generalized) 'standard'. Of course, when extending the Schroedinger respresentation $\pi$ of the Heisenberg group, as introduced in (\ref{expl}) to the complex numbers, the above 'dual' representation is in local frames simply the image of $f_{0,T_0}$ under an appropriate element $g \in G$ using the complex conjugate representation $\overline \pi$ instead of $\pi$. We will regard this choice of 'dual' to $\mathcal{L}$ as some sort of standard dual, closely ressembling Weil's choice of 'standard character' in \cite{weil}, the reason for the ressemblance will be clear below. On the other hand, let 
\[
\iota:M \times M\rightarrow M\times M, \quad \iota(x,y)=(y,x)
\] 
be the involution that 'switches the factors' and consider the graph of $\Phi$ in $M\times M$, composed with $\iota$, so ${\rm gr}^\iota_{\Phi}=\iota\circ {\rm gr}_{\Phi}\subset \mathcal{N}(\Delta)\subset M \times M$. Furthermore, consider $\hat J^\iota=\iota^*\hat J$. Then we can repeat the above constructions for $({\rm gr}_{\Phi}^\iota,\hat J^\iota) $ instead of $({\rm gr}_{\Phi}, \hat J)$ (note that $\hat \omega$ remains fixed on $M\times M$) and arrive at two Frobenius structures $(\Omega^\iota, \mathcal{L}^\iota, \omega_0)$ and $(\hat \Omega^\iota, \hat {\mathcal{L}}^\iota, \omega_0)$. In this situation, we can state:
\begin{theorem}\label{hopf}
Let $\Phi:M\rightarrow M$ be a $C^1$-small Hamiltonian diffeomorphism on $(M, \omega_0)$. Then we can associate to $(\Phi, M, \omega_0)$ four irreducible (in general singular), standard (in the above generalized sense) Frobenius structures $(\Omega, \mathcal{L})$, $(\hat \Omega, \hat {\mathcal{L}})$, $(\Omega^\iota, \mathcal{L}^\iota)$ and $(\hat \Omega^\iota, \hat {\mathcal{L}}^\iota)$ in the sense of Definition \ref{frobenius} so that the two irreducible Frobenius structures 
\begin{enumerate}
\item $(\Omega^e, \mathcal{L}^e)=(\Omega \otimes {\bf 1} + {\bf 1}  \otimes \hat \Omega,\ \mathcal{L}\otimes \hat {\mathcal{L}})$, 
\item $(\Omega^{e, \iota}, \mathcal{L}^{e, \iota})=(\Omega^\iota \otimes {\bf 1} + {\bf 1}  \otimes \hat \Omega^\iota,\ \mathcal{L}^\iota\otimes \hat {\mathcal{L}}^\iota)$,
\end{enumerate}
are (in general singularly) rigid and a {\it dual pair} in the sense of 6. of Definition \ref{frobenius}. Furthermore, $(\Omega^{\mathfrak{e}}, \mathcal{L}^{\mathfrak{e}})= (\Omega^e \otimes {\bf 1} + {\bf 1}  \otimes \Omega^{e, \iota},\mathcal{L}^e\otimes {\mathcal{L}}^{e,\iota})$ defines an irreducible and {\it self-dual} Frobenius structure. The spectral cover of $(\Omega^{e}, \mathcal{L}^{e})$ coincides with ${\rm im}(s_l)$ and thus intersects the zero-section $M\subset T^*M$ exactly at the fixed points of $\Phi$.
\end{theorem}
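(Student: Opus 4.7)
The plan is to exploit three structural features in turn: (a) the generating function $S$ with $s_l=dS$ provided by the Hamiltonian hypothesis; (b) the explicit local description of $\mathcal{L}$ and its dual $\hat{\mathcal{L}}$ via the Schr\"odinger representation $\pi$ of the Heisenberg group and its complex conjugate $\overline{\pi}$; and (c) the involution $\iota$ on $M\times M$, satisfying $\iota^*\hat J=-\hat J$, as the symmetry interchanging the two ``halves'' of the construction.

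First I would produce the four irreducible (generalised) standard Frobenius structures explicitly. The pair $(\O,\mathcal{L})$ is the one already associated to $({\rm gr}_\Phi,\hat J)$ in the paragraphs preceding the theorem, built from $\hat s_l$; its ``dual'' $(\hat\O,\hat{\mathcal{L}})$ sits in the Frechet-dual of $\mathcal{Q}$ with pointwise multiplication $\hat\O^*(X)=A_1-iA_2$, so it is locally the $\overline{\pi}$-image of an $f_{0,T_0}$-generator. The $\iota$-versions arise verbatim from $({\rm gr}_\Phi^\iota,\iota^*\hat J)$ in place of $({\rm gr}_\Phi,\hat J)$.

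The core step is to verify the ``time-dependent Schr\"odinger equation'' characterising dual pairs in item 6 of Definition \ref{frobenius} for $(\O^e,\mathcal{L}^e)$. Pointwise at $p\in M$ the Frobenius action on $\mathcal{L}\otimes\hat{\mathcal{L}}$ is $\O(X)\otimes\id+\id\otimes\hat\O(X)$; evaluated on a section locally generated by $f_{h(x),T(x)}\otimes\overline{f_{h(x),T(x)}}$, the ``imaginary'' $iA_2$-pieces cancel and the surviving first-order real expression, whose principal symbol is $s_l$, can be integrated against the primitive $S$ to produce a local parallel section. This yields both rigidity of $(\O^e,\mathcal{L}^e)$ and the dual-pair condition; applying the same argument after exchanging $({\rm gr}_\Phi,\hat J)$ for its $\iota$-conjugate gives the corresponding statement for $(\O^{e,\iota},\mathcal{L}^{e,\iota})$, and irreducibility in each case is inherited from the irreducibility of $\pi$ (resp.\ $\overline{\pi}$) in the individual tensor factors.

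For the self-duality of the outer tensor product $(\O^{\mathfrak{e}},\mathcal{L}^{\mathfrak{e}})$ I would argue that $\iota$, through $\iota^*\hat J=-\hat J$, intertwines $(\O^e,\mathcal{L}^e)$ and $(\O^{e,\iota},\mathcal{L}^{e,\iota})$ up to complex conjugation, so that the four-fold tensor structure pairs each rigid pair with a copy of its conjugate --- the categorical analogue of Weil's ``standard character'' construction alluded to above --- whence self-duality in the sense of item 6 of Definition \ref{frobenius}. The spectral cover claim is then nearly tautological: the joint spectrum of $\{\O^e(X)\}_{X\in TM}$ at $p$ is, by the very definition of $\mathcal{L}$ through $\hat s_l$, precisely $s_l(p)\in T^*_pM$, so the spectral cover coincides with ${\rm im}(s_l)\subset T^*M$, meeting the zero section where $s_l=0$; by (\ref{trivdiag}) this locus is exactly ${\rm gr}_\Phi\cap\Delta=\mathrm{Fix}(\Phi)$.

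The principal technical obstacle is the cancellation-and-integration step above: showing that the $iA_2$-pieces in the metaplectic covariant derivative on $\mathcal{L}\otimes\hat{\mathcal{L}}$ cancel at the level of operators rather than merely of principal symbols, and that exactness $s_l=dS$ (not only the closedness $ds_l=0$) is what allows the resulting first-order real equation to be globally integrated on $M$. This is where the Hamiltonian --- rather than purely symplectic --- hypothesis enters essentially, and where the parallel with Kirillov's orbit method invoked in the preceding discussion is made precise.
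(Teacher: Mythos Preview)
Your architecture matches the paper's: build the four structures, observe that on $\vartheta\otimes\hat\vartheta$ the operator $\Omega^e(X)=\Omega(X)\otimes{\bf 1}+{\bf 1}\otimes\hat\Omega(X)$ acts by the real scalar $dS(X)$ (your ``$iA_2$-cancellation''), then bring in $\iota$. The paper does this by an explicit coordinate computation in the chart $\phi_p$ of (\ref{trivdiag}), writing down $\vartheta(s_l,S)$ via the Schr\"odinger representation and a specific $\delta\in\Gamma(\mathcal{Q}')$, and reducing the claim to two local identities (equation (\ref{cond1})).

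The genuine gap is the dual-pair step. Rigidity of $(\Omega^e,\mathcal{L}^e)$ --- that is, $\Omega^e=f^*(dz/z)$ for some global $f$ --- does follow from your cancellation plus $s_l=dS$; this is the first line of (\ref{cond1}). But ``dual pair'' in item~6 of Definition~\ref{frobenius} is a \emph{cross} condition: you must exhibit $\Omega^e=(\Theta^{e,\iota})^*(dz/z)$ with $\Theta^{e,\iota}=\langle\vartheta^\iota\otimes\hat\vartheta^\iota,\delta\otimes\delta\rangle$ built from sections of the \emph{$\iota$-twisted} bundle $\mathcal{L}^{e,\iota}$, and symmetrically. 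Your assertion that ``$\iota$ intertwines $(\Omega^e,\mathcal{L}^e)$ and $(\Omega^{e,\iota},\mathcal{L}^{e,\iota})$ up to complex conjugation'' does not deliver this. What the paper actually checks --- the second line of (\ref{cond1}) --- is the identity
\[
d\langle x_0-x_1,\,y_1-y_0\rangle \;+\; (\phi_p^{-1})^*dS \;=\; \iota^*(\phi_p^{-1})^*dS
\]
in the Darboux coordinates on $\Delta$. This is \emph{not} a consequence of exactness $s_l=dS$ alone; it encodes that $s_l$ is the $\phi^{-1}$-image of the graph of a bijective $C^1$-small symplectomorphism, so that $\iota$ genuinely exchanges the independent parameters $(x_1,y_0)$ and $(x_0,y_1)$ on ${\rm im}(s_l^\Delta)$ (see the Remark after Theorem~\ref{theorem2}: if bijectivity, $C^1$-smallness, or the symplectic property of $\Phi$ is dropped, this line fails). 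Without it, $\langle\vartheta^\iota\otimes\hat\vartheta^\iota,\delta\otimes\delta\rangle$ has no reason to have logarithmic derivative $dS$, and neither the dual-pair claim nor the self-duality of $(\Omega^{\mathfrak{e}},\mathcal{L}^{\mathfrak{e}})$ follows. You have correctly located where exactness enters, but you have not located where the \emph{diffeomorphism} property of $\Phi$ enters, and that is the missing half of the argument.
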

\begin{proof}
We discussed above that $\Phi$ defines a standard irreducible (in general singular) Frobenius structure $(\Omega, \mathcal{L}, \omega_0)$ on $M$ using Theorem \ref{genclass} resp. Proposition \ref{higgs}, by duality we thus get a standard irreducible Frobenius structure $(\hat \Omega, \hat {\mathcal{L}}, \omega_0)$. Let $\lambda \in \Omega^1(T^*M)$ be the canonical $1$-form on $T^*M$ and note that since $\Phi$ is Hamiltonian, $s_l^*\lambda \in \Omega^1(M)$, where $s_l:M\rightarrow T^*M$ is the section defined by $\Phi$ as described above, is exact and we have exactly $s^*_l\lambda=dS$ on $M$. Thus given a meta-unitary frame ($P$ being reduced reduced to $\hat U_L(n)$) over an open set $\tilde U\subset M$, that is a section $s_u: U\rightarrow P^J_L|U$, $s_u(p)$ defines in a nghbd $U_p$ of any $p \in \tilde U$, so $U:=U_p \subset \tilde U \subset M$ normal Darboux coordinates $\psi^\omega_{U,p}: U \rightarrow \R^{2n}\simeq T_pM$. Furthermore, identifying $T_pM\simeq T^*_pM$ by $\omega$, we recall the associated isomorphism $\phi_p:\mathcal{U}_0\cap \pi_{M}^{-1}(U)=:\mathcal{U}\subset T^*_pM \rightarrow \mathcal{N}(\Delta)$ of (\ref{trivdiag}) where $\pi_{M}:T^*M\rightarrow M$ has in the local coordinates $(q_1, q_2, p_1, p_2)$ of $\mathcal{U}\simeq U_0\subset \R^{2n}$, $U_0$ open, determined by dualizing (using $\omega$) the Darboux coordinate system on $U$ given by $\psi^\omega_{U,p}$, the form
\begin{equation}\label{difference}
\phi_p(q_1, q_2, p_1, p_2)=(x_1, y_0, y_1-y_0, x_0-x_1),
\end{equation}
where $(x_0,y_0, x_1,y_1) \in \R^{2n}$ (cf. Remark 9.24 in \cite{salamon}), furthermore we have  $\phi_p^*(\lambda|\pi_M^{-1}(U))= (y_1-y_0)dx_1+ (x_0-x_1)dy_0$, where we chose $x_1, y_0$ as coordinates on the diagonal of $\phi_p(\mathcal{U})\subset M\times M$. We claim that there are global sections $\delta \in \Gamma(\mathcal{Q}')$ and $\vartheta^\iota(s_l,S) \in \Gamma(\mathcal{L}^\iota)$ resp. $\hat \vartheta^\iota(s_l,S) \in \Gamma(\hat {\mathcal{L}}^\iota)$, so that with $\Omega^e=\Omega \otimes {\bf 1} + {\bf 1}  \otimes \hat \Omega \in {\rm End}(\mathcal{L}^e)$ we have 
\begin{equation}\label{duality2a}
\Omega^e=(\Theta^{e, \iota})^*(\frac{dz}{z}),\quad {\rm where}\quad  \Theta^{e,\iota}=<\frac{1}{i}\vartheta^\iota(s_l,S) \otimes \hat \vartheta^\iota(s_l,S), \delta\otimes \delta> \in C^\infty(M,\C^*),
\end{equation}
where the scalar product is defined here by interpreting $\hat {\mathcal{L}}$ as complex conjugate as described above, thus we can infer $\hat {\mathcal{L}}^\iota \subset \mathcal{Q}$ and setting pointwise $<a_1\otimes a_2, b_1\otimes b_2>= <a_1, b_2>\cdot <a_2, b_2>$ for $a_i, b_i \in \mathcal{Q}',\ i=1,2$. Recall that $s_l$ defines a section $\hat s_l:M\rightarrow P_{G/G_L^0}\simeq (\pi_P^*(T^*M) \times_{Mp(2n, \R)} P)/G_L^0$
and thus a section of $\mathcal{E}_{{\rm Gr}_1(\mathcal{W})}=P^J_L\times_{G_L^0, {\rm ev}_1\circ \tilde \mu_2\circ (i, i_{\mathcal{W}})}{\rm Gr}_1(\mathcal{W})$ where ${\rm Gr}_1(\mathcal{W})$ is defined by the subset of complex lines in $L^2(\R^n, \C)$ given by the set $\C \cdot f_{h, T},\ h \in H_n,\ T \in \mathfrak{h}$ and $\tilde \mu_2$ is the action of $\hat U_L(n)\simeq G_L^0\subset G$ on $\mathcal{A}_2\simeq G/G_0\cap G_U$ as introduced in the proof of Theorem \ref{genclass}. Note then that for any $p \in U\subset M$, and with the above constructed local section $\overline s_u:U\subset M\rightarrow P^J_L$, write a local representative of $\hat s_l$  as $\hat s_l(x)= ((\overline s_u(x),\tilde s_l(x)), \overline s_u(x)).G_L^0$ for $x \in U\subset M$ if $s_l(x)=[s_u(x), \tilde s_l(x)],\ \tilde s_l(x) \in \R^{2n},\ s_u(x) = \pi_R(\overline s_u(x))\in \pi_R(P_L^J),\ x \in U$, if $\pi_R:P\rightarrow R$ is the projection of $P$ onto the symplectic frame bundle $R$, we can define an assignment
\[
\vartheta(s_l, S): U \rightarrow \mathcal{E}_{{\rm Gr}_1(\mathcal{W})}, \quad (p, s_l(x), S(x)) \mapsto [\overline s_u(x), \tilde \mu_2 \left( (\tilde s_l(x), S(x), I), (\C \cdot f_{0, iI}, \mathcal{A}_2(V, J_0))\right)],\ x \in U,
\]
which by definition gives rise to the implicated global section $\vartheta(s_l, S):M\rightarrow \mathcal{L}$ (that we denote by the same symbol). Let $s_p: U\rightarrow P$ the local frame that is associated to the symplectic Darboux coordinate system induced by $s_u(p)$ ($p$ fixed) on $U$. Let $g:U \rightarrow Mp(2n, \R)$ so that $\overline s_u(x)=s_p(x).g(x),\ x\in U$. Pulling back $\vartheta(s_l, S)$ via $\phi_p^{-1}:\mathcal{U} \rightarrow \mathcal{N}(\Delta)$ to a section of $(\phi_p^{-1}|U)^*(\mathcal{L}|U)$ over $\Delta \cap \phi_p^{-1}(U)$, using (\ref{difference}) and writing $s^\Delta_l=(\phi_p)\circ s_l\circ \phi_p^{-1}|\Delta\cap \phi_p(\mathcal{U}): \Delta\cap \phi_p(\mathcal{U})\rightarrow \mathcal{N}(\Delta)$ we get setting $s^\Delta_l(x_1, y_0, 0, 0)=(x_1, y_0, y_1-y_0, x_0-x_1)$ and viewing $(x_1, y_0)$ as independent parameters on ${\rm im}(s^\Delta_l)$ resp. $U$ and thus $x_0, y_1$ as functions of $(x_1,y_0)$ (which is possible exactly because $\Phi$ is $C^1$-small) we infer for any $x \in \phi_p^{-1}(U)$ the expression
\[
\begin{split}
(\phi_p^{-1})^*&(\vartheta(s_l, S)|U)(x)= [(\phi_p^{-1})^*\overline s_u(x), \pi((\phi_p)_*\tilde s_l(x), (\phi_p^{-1})^*S(x))f_{0, iI}] \\
&=[(\phi_p^{-1})^*(s_p. g)(x),\pi((\phi_p)_*\tilde s_l(x), (\phi_p^{-1})^*S(x))f_{0, iI}]\\
&= [(\phi_p^{-1})^*(s_p)(x), e^{i\left((\phi_p^{-1})^*S(x)+ <x_0-x_1, z- (y_1-y_0)>\right)} L((\phi_p^{-1})^*g(x))(f_{0, iI}(z-{\rm pr}_{2}(\phi_p^{-1})^*\tilde s_l(x)))],
\end{split}
\]
where $z \in \R^n$, $x \in \phi_p(U)\subset \Delta$, ${\rm pr}_{2}:\R^{2n}\rightarrow \R^n,\ {\rm pr}_2(x,y)=y$. Note that $(\phi_p^{-1})^*\overline s_p:\Delta \cap \phi_p^{-1}(U) \rightarrow P$ (identifying $M \simeq \Delta$) does not take values in $P^J_L$ in general. Finally define $\delta = [s_u(x), \delta(0)\cdot e^{\frac{<{\rm pr}_{2}(\tilde s_l(x)), {\rm pr}_{2}(\tilde s_l(x))>}{2}} ] \in \Gamma(\mathcal{Q}'|U)$, note that by reduction of $P$ to $P_L^J\simeq \hat O(n)$, $\delta$ gives a globally well-defined section of $\mathcal{Q}'$ on $M$. Note that $d(\phi_p^{-1})^*S(x)|U)= (\phi_p^{-1})^*(s_l^*\lambda|U)=(y_1-y_0)dx_1+(x_0-x_1)dy_0$ (cf. \cite{salamon}, Remark 9.24) while $(\phi_p^{-1})^*(\Omega(a_j+b_j)\vartheta(s_l, S)|U)= (1+i)((y_1-y_0)_j+ i(x_0-x_1)_j)(\phi_p^{-1})^*\vartheta(s_l, S)|U), j=1, \dots, n$, where $a_i, b_i \in \R^{2n}$ as in Section \ref{coherent}. Defining a section $\hat \vartheta(s_l, S)\in \Gamma(\hat {\mathcal{L}})$ as the complex conjugate of $\vartheta(s_l, S)$, while $\vartheta^\iota(s_l,S)$ is the pullback of $\vartheta(s_l, S)$ under $\iota_\phi=\phi_p^{-1}\circ \iota\circ\phi|M: \mathcal{U}_0\cap M \rightarrow \mathcal{U}_0\cap M$, analogously for $\hat \vartheta(s_l, S)$ and $\hat \vartheta^\iota(s_l, S)$. Note that $\iota_\phi$ locally maps the independent variables $(x_1, y_0)$ on $U$ to $(x_0, y_1)$. Putting everything together and noting that with the above notations and choices the assertions of the theorem concerning duality of pairs are over any $U\subset M$ as above and in the above coordinates equivalent to the validity of 
\begin{equation} \label{cond1}
\begin{split}
(\Omega(X)\otimes {\bf 1}+{\bf 1}\otimes \hat \Omega(X))\vartheta(s_l, S)\otimes \hat \vartheta(s_l, S)|U&= (dS|U)(X)\vartheta(s_l, S)\otimes \hat \vartheta(s_l, S)|U\\
d(<x_0-x_1, y_1-y_0>)+ (\phi_p^{-1})^*(dS|U)(x)&= \iota^*(\phi_p^{-1})^*(dS|U)(x).
\end{split}
\end{equation}
for $X \in \Gamma(TU), x\in \phi_p(U)$, we arrive at the assertion (\ref{duality2a}). All remaining assertions of the theorem are proven in complete analogy.
\end{proof}
Of course we can pose the question if the above $4$-tuple of irreducible, standard Frobenius structures defines the only pair of 'dual pairs' of Frobenius structures defining a self-dual, irreducible Frobenius structure in the sense of Definition \ref{frobenius} that is associated to a given exact symplectomorphism $\Phi$ (being $C^1$-close to the identity). More precisely we aim first to classify the pairs of irreducible (in general weak) generalized standard Frobenius structures $(\Omega, \mathcal{L})$ resp. $(\hat \Omega, \hat {\mathcal{L}})$ in the sense of Definition \ref{frobenius} so that the spectral cover associated to $(\Omega^e, \mathcal{L}^e)=(\Omega \otimes {\bf 1} + {\bf 1}  \otimes \hat \Omega,\ \mathcal{L}_1\otimes \hat {\mathcal{L}_2})$ coincides with the image of the section $s_l: M\rightarrow T^*M$ with the above notation and $(\Omega, \mathcal{L})$ is (non-generalized) standard and coincides with the 'canonical' (or tautological) irreducible Frobenius structure $(\Omega, \mathcal{L})$ associated to $\Phi$ as defined above Theorem \ref{hopf} while $(\hat \Omega, \hat {\mathcal{L}})$ with $\hat {\mathcal{L}} \subset \mathcal{Q}'$ is generalized standard and irreducible (both in general singular). Moreover we demand that with $\iota: M \times M \rightarrow M\times M$ the involution defined above $(\Omega^e, \mathcal{L}^e)$ and $(\Omega^{e, \iota}, \mathcal{L}^{e,\iota})$ are a dual pair in the sense of Definition \ref{frobenius}. In a second step, we classify the irreducible, standard dual pairs in the above sense as a function of the underlying generating function $S$ (resp. the Hamiltonian diffeomorphism $\Phi$ $C^1$-close to the identity). We will see that classifying the dual pairs of Frobenius structures associated to the set of $\Phi$ in this sense leads to the problem of matrix factorizations on one hand and, as already mentioned, to Kirillov's orbit method on the other hand. To see this, note that as remarked above (\ref{cond1}) Theorem \ref{hopf} is in the coordinates introduced above locally equivalent to the specialization to $T=iI$ and $(\hat x_0, x_1, y_0, \hat y_1)=(x_0, x_1, y_0, y_1)$ of the following two local equations for $j=1, \dots, n$ (if $(\phi_p^{-1})^*(\hat \Omega(a_j+b_j)\hat \vartheta(s_l, S)|U)= (1+i)((\hat y_1-y_0)_j- i(\hat x_0-x_1)_j)(\phi_p^{-1})^*\vartheta(s_l, S)|U), j=1, \dots, n$ for $\vartheta(s_l, S) \in \Gamma(\hat {\mathcal{L}})$ while $d((\phi_p^{-1})^*S(x)|U)= (\phi_p^{-1})^*(s_l^*\lambda|U)=(y_1-y_0)dx_1+(x_0-x_1)dy_0$ and the action of $\Omega$ on $\vartheta(s_L, S)$ is as in the proof of Theorem \ref{hopf} above) while $T\in \mathfrak{h}$:
\begin{equation}\label{local}
\begin{split}
(y_1-y_0)_j- i(x_0-x_1)_j &+ (\hat y_1-y_0)_j+ \sum_{i=1}^n T_{ji}(\hat x_0-x_1)_i= 2(\phi_p^{-1})^*i_{a_j}(dS(x)|U),\\
i(y_1-y_0)_j+(x_0-x_1)_j &+ (\hat x_0-x_1)_j+ \sum_{i=1}^n \overline T_{ji}(\hat y_1-y_0)_i= 2(\phi_p^{-1})^*i_{b_j}(dS(x)|U)\\
d({\rm Im}(<\sum_{i=1}^n T_{ji}(\hat x_0-x_1)_i, \hat y_1-y_0>) &+ <x_0-x_1, y_1-y_0>)/2+ (\phi_p^{-1})^*(dS|U)(x)= \iota^*(\phi_p^{-1})^*(dS|U)(x).
\end{split}
\end{equation}
where $\overline T_{ji}$ denote the components of the complex conjugated matrix $\overline T$ if $T \in \mathfrak{h}$, while in the second line the evaluation on $b_j, j=1,\dots, n$ ($a_i,b_i, i=1, \dots, n$ are here as defined in Section \ref{coherent}) entails a relative sign change between the first two and the subsequent summands on the left hand side since $\Omega^*(J(\cdot))=-i \Omega^*(\cdot)$.
We will see that this local condition is sufficient and neccessary to determine the pairs of generalized standard, irreducible Frobenius structures that give dual pairs in the above described sense. Assume we have given the $\tilde G = \hat U(n)\subset G$-reduction $P_{\tilde G}$ of $P$ representing tautologously the standard $\tilde G$-reduction of the $G$-bundle $\hat P$ as given in (\ref{reduction}). Assume in addition we have given a global section $\hat s_l: M\rightarrow \hat P_{G/\tilde G}\simeq P_{G/\tilde G}\simeq \hat P/\tilde G$ associated to a closed section $s_l: M\rightarrow T^*M$ resp. a Hamiltonian diffeomorphism $\Phi$ and the choice of the $\omega$-compatible almost complex structure inducing $P_{\tilde G}$, where $P_G= (\pi_P^*(T^*M) \times_{Mp(2n, \R)} P)\simeq \hat P_G=P\times_{Mp(2n, \R), {\rm Ad}} G$ as explained below (\ref{tangentaction}). As above, using these isomorphisms write local representants of $\hat s_l$ as $\hat s_l(x)= ((\overline s_u(x),\tilde s_l(x)), \overline s_u(x)).\tilde G$ for $x \in U\subset M$ if $s_l(x)=[s_u(x), \tilde s_l(x)],\ \tilde s_l(x) \in \R^{2n},\ s_u(x)= \pi_R(\overline s_u(x))\in \pi_R(P_{\tilde G}),\ x \in U$, if $\pi_R:P\rightarrow R$ is the projection of $P$ onto the symplectic frame bundle $R$. Consider now $\phi: \mathcal{U}_0 \rightarrow \mathcal{N}(\Delta)$ as defined in (\ref{trivdiag}) and consider $P_G^\Delta= (\phi_p^{-1})^*P_G$, then $(\phi_p^{-1})^*\hat s_l: \phi_p(U) \rightarrow P_G^\Delta/\tilde G$ and if $s_p: U\subset M \rightarrow P$ is the symplectic Darboux frame induced by $s_u(p)$ for a fixed $p \in U$ on $U$ as in the proof of Theorem \ref{hopf}, we have with $P^\Delta= (\phi_p^{-1})^*P$ (analogously for $R$) that $(\phi_p^{-1})^*s_l(x)=[(s^\Delta_p(x), \tilde s_l(x).g(x)],\ \tilde s_l(x) \in \R^{2n},\ s^\Delta_u(x):=(\phi_p^{-1})^*s_u(x)= \pi_{R^\Delta}((\phi_p^{-1})^*\overline s_u(x))\in P^\Delta,\ x \in U$ analogously defined $s^\Delta_p(x)$ and $s^\Delta_u(x)=s^\Delta_p(x).g(x), x \in U$ for $g:U \subset M \rightarrow P^\Delta$. Denote by $\pi_{Mp}: G/\tilde G\rightarrow Mp(2n, \R)/\tilde G$ the projection to the subquotient. \\
Let now $\hat s_2: M\rightarrow \hat P_{G/\tilde G}\simeq P_{G/\tilde G}$ be another section of the bundle $\hat P_{G/\tilde G}$, with associated closed section $s_2(x):M \rightarrow T^*M$ given by the composition of $\hat s_2$ with $\tilde {\rm pr}_1: P_{G/\tilde G}\rightarrow T^*M$ (the latter as described above Theorem \ref{genclass}), then there exists an equivariant map $\overline T:P\rightarrow Mp(2n, \R)/\hat U(n)$ so that $\pi_{Mp}(\hat s_l).\overline T(p)=\pi_{Mp}(\hat s_2),\ p\in P$ if we identify $\hat s_l, \hat s_2:P_G\rightarrow G/\tilde G$ and $\pi_{Mp}(\hat s_l)$ with the identity section in $P/\hat U(n)=P\times Mp(2n, \R)/\hat U(n)$. Note also that we identify $\mathfrak{h}\simeq Mp(2n, \R)/\hat U(n)\simeq Sp(2n, \R)/U(n)$, writing $T:P\rightarrow Sp(2n, \R)/U(n)$ for the image of $\overline T$. Note that $\overline T$ is represented by projecting an automorphism (a gauge transformation, thus covering the identity on $M$) $g:P_G\rightarrow P_G$ satisfying $\hat s_l.g=\hat s_2$ and being represented by $\tilde g: P_G\rightarrow G$ to $Mp(2n, \R)/\tilde G$ and restricting its image to $P$. Then we can locally write with $s_u$ as above $\hat s_2(x)= ((\overline s_u(x),\tilde s_2(x)), \overline s_u(x)).\tilde G$, where $0=(\tilde s_2-T.\tilde s_2^0).\tilde G: U\subset M \rightarrow \R^{2n}$ if $\tilde s_2^0: U\rightarrow \R^{2n}$ so that $s_2= [s_u^0(x), \tilde s_2^0(x)], x  \in U$, where $s_u^0: U\subset M\rightarrow P_{\tilde G,2}:=s_2^*(\tilde E_G)$ (notation of the proof of Theorem \ref{genclass}) and we can thus write locally $s_2(x)=[s_u(x), T(\tilde s_2^0(x))],\ \tilde s_2^0(x) \in \R^{2n}$. Pulling back the local expressions for $\hat s_2, s_2$ to $\Delta$ and using the Darboux frames $s^\Delta_p(x) \in P^\Delta$ as above, we have nearly proven the following theorem, which we postpone after having established the following Lemma. Note that here and in the following in this article, we assume that $P$ is reducible to $\hat O(n)\subset \hat U(n)\subset MP(2n,\R)$, the corresponding reduction denoted as before by $P_{\tilde  G}$ with $\tilde G=\hat O(n)$. This is for instance the case if $M$ is equipped with a global Lagrangian distribution $\Lambda\subset TM$ which we will occassionally, but not throughout, also assume to be invariant by $\nabla$ thus giving a reduction of the associated connection $Z$ on $P$ to $\tilde Z:TP_{\tilde G}\rightarrow \widehat {\mathfrak{o}(n)}$, where $\widehat {\mathfrak{o}(n)}$ is the Lie algebra of $\hat O(n)$.
\begin{DefLem}\label{koszul}
Let $s_1, s_2 \in \Gamma(T^*M)$ and $s_1^*, s_2^* \in \Gamma (TM)$ be their $\omega$-duals. Let $\Lambda_{\tilde G} \in \Gamma({\rm Lag}(TM, \omega),U_i)$ be a global section of the Lagrangian Grassmannian of $(TM, \omega)$ inducing the chosen $\hat O(n)$-reduction $P_{\tilde G}$ of $P$ reducing the given $\hat U(n)$ reduction of $P$ induced by $J$. Then the $\C$-bilinear map, dependent on the chosen $\omega$-compatible structure $J$
\[
[\cdot, \cdot]_{J, \Lambda_{\tilde G}}: \Gamma(T^*M)^2 \rightarrow \Gamma(T^*M), \quad (s_1, s_2) \mapsto [s_1, s_2]_{J,  \Lambda_{\tilde G}}= \ d\left(\omega({\rm pr}_{\Lambda_{\tilde G}}(s_1^*), {\rm pr}_{J\Lambda_{\tilde G}}(s_2^*))\right),
\]
where ${\rm pr}_{\Lambda}: T^*U\rightarrow T^*U$ for any $\Lambda \in \Gamma({\rm Lag}(TM, \omega),M)$ denotes the projection onto $\Lambda$ according to the direct sum decomposition $TU=\Lambda \oplus J\Lambda$, is well-defined. We will call $[\cdot, \cdot]_{J, \Lambda_{\tilde G}}$ the (symplectic) Koszul bracket on $M$ associated to $J$ and $\Lambda_{\tilde G}$ (and frequently notationally suppress the dependency on $\Lambda_{\tilde G}$ in the following).
\end{DefLem}
\begin{proof}
Since $U(n)$ acts transitively on the set ${\rm Lag}(\R^{2n}, \omega_0)$ with isotropy group $U(n)$, we conclude that the fibre bundle ${\rm Lag}(TM, \omega)=R^J\times_{U(n)} {\rm Lag}(\R^{2n}, \omega_0)$, where $R^J$ is the given $U(n)$-reduction of the symplectic frame bundle $R$ over $M$, is isomorphic to  $R^J\times_{U(n)} U(n)/O(n)$. Now since $U(n)$ is the symmetry group of the pair $(\omega, J)$, we see that it is also the symmetry group of any local bracket $(s_1, s_2)\mapsto d(\rho_i\omega({\rm pr}_{\Lambda_i}(s_1^*), {\rm pr}_{J\Lambda_i}(s_2^*)), i \in I$ and acts transitively on the set of these local brackets, so all these coincide, which was the claim.
\end{proof}
\begin{theorem}\label{theorem2}
Given $(M, \omega)$ and a compatible almost complex structure $J$ and associated metric $g$, $P$ a metaplectic structure on $M$, to each pair of sections $\hat s_l, \hat s_2: M\rightarrow P_{G/\tilde G}$, where the associated $s_l: M\rightarrow T^*M$ is exact with $S:M\rightarrow \R$ its primitive and $s_2:M \rightarrow \R$ associated to $\hat s_2$ is closed, (notation as above) we can associate a map $T:P\rightarrow Sp(2n, \R)/U(n) \simeq \mathfrak{h}$ so that if $[\cdot, \cdot]_J$ is the symplectic Koszul bracket on $\Gamma(T^*M)$ associated to $J$ as defined in Definition \ref{koszul} while $[\cdot, \cdot]_{J_T}$ is the Koszul bracket corresponding to $J_T$ ($J_T$ is defined below (\ref{global})), we have that if $(\Omega, \mathcal{L})$ is the canonical standard irreducible Frobenius structure associated to $\hat s_l$ as above, $(\hat \Omega, \hat {\mathcal{L}})$ is the dual of the canonical standard irreducible Frobenius structure associated to $\hat s_2$ and $\iota$ is the involution defined above, then the pair $(\Omega^e, \mathcal{L}^e)$ and $(\Omega^{e, \iota}, \mathcal{L}^{e,\iota})$ associated as above define a dual pair in the sense of Definition \ref{frobenius} so that the spectral cover of $(\Omega^{e}, \mathcal{L}^{e})$ coincides with ${\rm im}(s_l)=dS$ if and only if 
\begin{equation}\label{global}
\begin{split}
s_l(\alpha_J^+(\cdot))&+ s_2(\alpha_{J_T}^-(\cdot))= 2dS(\cdot) \in \Omega^1(M, \C)\\
\frac{1}{2}([s_2, s_2]_{J_T} &+  [s_l, s_l]_J)(\cdot)+dS(\cdot)= \iota^*(dS)(\cdot) \in \Omega^1(M, \C),
\end{split}
\end{equation}
where $J_T$ here and above is the almost complex structure on $M$ associated to the $\hat U(n)$-reduction of $P$ given by $\pi_{Mp}\circ \hat s_2$ and $\alpha^\pm_{J}, \alpha^\pm_{J_T}$ are the projections onto the $\mp i$-eigenspaces of $J, J_T$ on $T_{\C}M$, understood as $\R$-linear injections $TM\hookrightarrow T_{\C}M$. Furthermore, the condition (\ref{global}) corresponds to the local condition (\ref{local}) in the coordinates on $\Delta$ introduced above.
\end{theorem}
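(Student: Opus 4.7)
The plan is to extend the local coherent-state computation in the proof of Theorem \ref{hopf} from the symmetric case $(\hat s_2, T) = (\hat s_l, iI)$ to arbitrary $(\hat s_2, T)$, and then recognize the resulting local identities as the global invariants (\ref{global}) via the Koszul bracket of Definition/Lemma \ref{koszul}. I would proceed in three steps.

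First, construction of $T$: given $\hat s_l$ and $\hat s_2$, the paragraph above the theorem already produces an equivariant $\overline T: P \to Mp(2n,\R)/\hat U(n)$ as the projection of a gauge transformation $g: P_G \to P_G$ with $\hat s_l.g = \hat s_2$, and $T$ is its further image in $Sp(2n,\R)/U(n) \simeq \mathfrak{h}$. The almost complex structure $J_T$ is the one induced by the $\hat U(n)$-reduction $\pi_{Mp}\circ \hat s_2$ of $P$. Locally in the Darboux frame $s_u$, one has $s_2 = [s_u, T(\tilde s_2^0)]$, and the coherent state $\hat\vartheta(s_2, S)$ pulled back via $\phi_p^{-1}$ takes the form of a translate of the Gaussian $f_{0,T}$ with an explicit phase, strictly in parallel with the formula in Theorem \ref{hopf}.

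Second, local equations: I would compute $(\Omega \otimes \mathbf{1} + \mathbf{1} \otimes \hat\Omega)(\vartheta(s_l, S) \otimes \hat\vartheta(s_2, S))$ using the $T$-twisted Gaussian in the second factor. The eigenvalue identities for the Heisenberg generators $a_j$ and $b_j$ then acquire the additional terms $\sum_i T_{ji}(\hat x_0 - x_1)_i$ and $\sum_i T_{ji}(\hat y_0 - y_1)_i$ relative to the $T = iI$ case of Theorem \ref{hopf}, while the phase-matching condition enforcing $\iota$-compatibility picks up the correction $\mathrm{Im}\langle T(\hat x_0 - x_1), \hat y_1 - y_0\rangle/2$ to the Theorem \ref{hopf} phase $\langle x_0-x_1, y_1-y_0\rangle$. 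This yields the system (\ref{local}); that (\ref{local}) is both necessary and sufficient for the dual-pair property of Definition \ref{frobenius} follows by the same final computation as in Theorem \ref{hopf}, and the spectral-cover statement is immediate since $\vartheta(s_l,S) \in \Gamma(\mathcal{L})$ projects onto $\mathrm{im}(s_l)$ under the canonical map to $T^*M$.

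The hard part is the third step, translating (\ref{local}) into the frame-independent form (\ref{global}). In a Darboux frame adapted to $\Lambda_{\tilde G}$, so that $\Lambda_{\tilde G}$ is spanned by the $x$-axes and $J\Lambda_{\tilde G}$ by the $y$-axes, the combination $(y_1-y_0)_j + i(x_0-x_1)_j$ is the value of $s_l$ on the $(-i)$-eigenspace projection $\alpha_J^+(a_j + ib_j)$, and similarly $(\hat y_1-y_0)_j - \sum_i T_{ji}(\hat x_0-x_1)_i$ represents $s_2 \circ \alpha^-_{J_T}$ once $J_T\Lambda_{\tilde G}$ is identified with the image of $J\Lambda_{\tilde G}$ under $T$; summing the first two lines of (\ref{local}) over $j$ and reading them as equalities of $1$-forms recovers the first line of (\ref{global}). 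For the second line, Definition/Lemma \ref{koszul} identifies $d\langle x_0-x_1, y_1-y_0\rangle$ with $d(\omega(\mathrm{pr}_{\Lambda_{\tilde G}}(s_l^*), \mathrm{pr}_{J\Lambda_{\tilde G}}(s_l^*))) = [s_l, s_l]_J$ in these coordinates, and the same expression with $T$-twisted projections gives $[s_2, s_2]_{J_T}$. The $U(n)$-invariance argument underlying Definition/Lemma \ref{koszul}, together with the well-definedness of $T$ up to the residual gauge action, ensures that (\ref{global}) is frame-independent, so the equivalence with (\ref{local}) is truly global. The main technical difficulty I expect is verifying that the $T$-twisted contribution to the phase assembles precisely into $[s_2, s_2]_{J_T}$ rather than some spurious off-shell correction; this rests on the explicit Gaussian form of $f_{0,T}$ and on the symmetry of $T \in \mathfrak{h}$, which is exactly what makes the imaginary part descend to a closed bracket expression.
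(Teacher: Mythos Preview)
Your proposal is correct and follows essentially the same approach as the paper. The only difference is the direction of presentation: the paper starts from the global invariants (\ref{global}), writes them in the local frame $\overline s_u$ to obtain an intermediate system (\ref{local2}), and then pulls back via $\phi_p^{-1}$ to recover (\ref{local}); you instead derive (\ref{local}) first by extending the coherent-state computation of Theorem \ref{hopf} and then globalize to (\ref{global}). Both arguments rest on the same local identification of $\alpha_J^\pm$ and $\alpha_{J_T}^\pm$ with the combinations $a_j \mp iJ_0 a_j$ and $a_j \mp \sum_i T_{ji} a_i$, the same Koszul-bracket reading of the phase term, and the same appeal to the proof of Theorem \ref{hopf} for necessity and sufficiency of the local conditions.
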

{\it Remark.} In this special situation, that is under the requirement that the spectral cover of $(\Omega^{e}, \mathcal{L}^{e})$ coincides with ${\rm im}(s_l)=dS$, we see that the first condition means (while dualizing by $\omega$) that the intersection $E\subset \Gamma(T_{\C}M)$ of the affine space $-dS^* + {\rm im}\ (\alpha_J^+)\subset \Gamma(T_{C}M)$ and the linear space ${\rm im}\ \alpha_{J_T}^-\subset \Gamma(T_{\C}M)$ is non-empty when interpreting $\alpha_J^+, \alpha_{J_T}^-$ as linear maps $\Gamma(TM) \rightarrow \Gamma(T_{\C}M)$ and there exists an element $s \in \Gamma(TM)$ of the preimage of $E$ under $\alpha_J^+$ which is exactly $dS^*$.
\begin{proof}
Note that relative to the frame $\overline s_u: U\subset M\rightarrow P_{\tilde G}$ and corresponding trivialization $\phi_u(x): T_xU \rightarrow \R^{2n}, x \in U$, we can write the above two conditions (\ref{global}) using $\phi_u$ since $Ad((\phi_u)_*^{-1})(\alpha_J^\pm) (a_j)= a_j - iJ_0a_j$ where $J_0a_j=b_j$ and $Ad((\phi_u)_*^{-1})(\alpha_{J_T}^\pm) (a_j)= a_j - iJ_T(a_j)$ where $a_j-iJ_T(a_j) =a_i -\sum_{i=1}^n T_{ji}b_i$ for $j=1, \dots,n$ following the above comments as  
\begin{equation}\label{local2}
\begin{split}
(\tilde s_l)_{j} -i(\tilde s_l)_{j+n} &+ (\tilde s_2)_{j}+ \sum_{i=1}^n T_{ji} (\tilde s_2)_{i+n}  = 2dS(a_j) \in C^\infty(U, \C)\\
i(\tilde s_l)_{j}+ (\tilde s_l)_{j+n} &+ (\tilde s_2)_{j+n}+ \sum_{i=1}^n \overline T_{ji} (\tilde s_2)_{i}  = 2dS(b_j) \in C^\infty(U, \C)\\
d({\rm Im}(\sum_{j=1}^n<(\tilde s_2)_{j+n},\sum_{i=1}^n T_{ji} (\tilde s_2)_{i}>) &+  {\rm Im}(\sum_{j=1}^n<(\tilde s_l)_{j+n}, i (\tilde s_l)_{j}>))(\cdot)/2+dS(\cdot)= \iota^*(dS)(\cdot) \in \Omega^1(U, \C),
\end{split}
\end{equation}
where we noted that $J_T$ preserves $< \cdot, \cdot>_T$ and $<\cdot,\cdot>$ in the last line simply means the scalar product on $\R$. But pulling back the latter expressions via $\phi_p^{-1}$ for a fixed $p \in U$ gives exactly the local expressions relative to Darboux frames (\ref{local}). That these local conditions are neccessary and, given global well-definedness of $(\Omega, \mathcal{L})$, $(\hat \Omega, \hat {\mathcal{L}})$ and (\ref{local2}), also sufficient for the pair $(\Omega^e, \mathcal{L}^e)$ and $(\Omega^{e, \iota}, \mathcal{L}^{e,\iota})$ to define a dual pair in the sense of Definition \ref{frobenius}, is proven in complete analogy to the proof of Theorem \ref{hopf}. Note that the first two lines in (\ref{local2}) are equivalent to the condition that the spectral cover of $(\Omega^{e}, \mathcal{L}^{e})$ coincides with ${\rm im}(s_l)=dS$ while the third line is, given that prior condition is satisfied, equivalent to $(\Omega^e, \mathcal{L}^e)$ and $(\Omega^{e, \iota}, \mathcal{L}^{e,\iota})$ defining a dual pair in the sense of Definition \ref{frobenius}. Note finally that given (\ref{global}) with an appropriate $S:M\rightarrow \R$, there is is a $C^1$-small Hamiltonian mapping $\Phi:M\rightarrow M$, so that $dS$ is the image of the graph of $\Phi$ in $M\times M$ under $\phi^{-1}: \mathcal{N}(\Delta) \subset M\times M \rightarrow \mathcal{U}_0$, see the remark below this proof.
\end{proof}
{\it Remark.} Note that the second condition in (\ref{global}) or equivalently (\ref{local2}) resp. (\ref{local}) reflects the fact that $s_l: M\rightarrow T^*M$ is by assumption not only exact (closed and sufficiently $C^1$-small), but is given by the image of the graph of a symplectic diffeomorphism $\Phi:M\rightarrow M$ under the diffeomorphism $\phi^{-1}: \mathcal{N}(\Delta)\subset M\times M \rightarrow \mathcal{U}_0$. If either the bijectivity or the $C^1$-smallness of $\Phi$ or the fact that $\Phi$ is a symplectomorphism is dropped, the second condition of (\ref{global}) ceases to hold, so the condition reflects a certain special symmetry in $s_l$ resp. its primitive, thus $S$. Note further that the 'dual partner' $s_2:M \rightarrow \R$ as associated to $\hat s_2: M\rightarrow P_{G/\tilde G}$ in general neither is assumed to (or is concluded to) have this symmetry nor to be $C^1$-small.\\
Note also that the second condition in (\ref{global}) or equivalently (\ref{local2}) resp. (\ref{local}) has to be read as a data that connects different levels of Taylor expansions of a given function, here $S$, on $M$, while the $0$-th degree of the expansion remains locally invisible but is of course the reason why the local equations 'assemble' to the global condition (\ref{global}). Analogously, we want to argue below how to derive an in some sense inverse result: any pair of 'dual' Frobenius structures in the sense of Theorem \ref{theorem2} defines, under an appropriate condition of 'geodesic convexity-smallness' (not necessarily '$C^1$-smallness') a Hamiltonian system $C^1$-close to the identity. We expect these results to generalize to the case of general Hamiltonian systems without the 'geodesic convexity-smallness', in which case we, up to now, only know the corresponding result to Theorem \ref{hopf} (cf. \cite{kleinham3}), that is any Hamiltonian defines a pair of 'dual' Frobenius structures on $M$ whose spectral cover intersects $M$ exactly in the fixed points of its time $1$-flow.

\subsubsection{Dihedral Lagrangian Hopf modules}

We will in this section study a Hodge module with a certain kind of additional structure, which we labelled 'Dihedral Lagrangian Hopf module'. A natural generalization of this concept would be probably that of a 'cyclic Lagrangian' Hopf module, defined in an appopriate sense, but we will not pursue this path in this paper. In any case, the construction bears ressemblance to certain structures in Class Field theory related to cyclic factor sets (cf. \cite{weil}, Chapter IX, §4) and bears probably connections to the construction of the cyclic cohomology of Hopf algebras as for instance reflected in the article (\cite{menichi}). We will prove that the structure appearing in Theorem \ref{hopf} is a Dihedral Lagrangian Hopf module in this sense.\\
Let $B$ be an algebra, that is a vector space over a field $k$ with an associative multiplication $\mu: B\otimes B \rightarrow B$ and unit $\eta: k\rightarrow B$ written $\eta(1)=1_B$ so that $\mu\circ (\eta\otimes Id)= \mu\circ (Id\otimes \eta)=Id$ where $k \otimes B\simeq B\simeq B\otimes k$ are identified. We will follow in this general discussion of Hopf algebras resp. Hopf modules essentially (\cite{hopf}) and kindly ask the reader to refer for diagrams, (proofs of) theorems and examples to loc. cit. A bialgebra is then an algebra $(B, \mu, \eta)$ together with a coassociative comultiplication $\Delta:B \rightarrow B\otimes B$ and a counit $\epsilon:B \rightarrow k$ so that thus
\[
(\Delta \otimes {\rm Id})\circ \Delta= ({\rm Id} \otimes \Delta)\circ \Delta, \ (\epsilon\otimes {\rm Id})\circ \Delta= (Id\otimes \epsilon)\circ \Delta={\rm Id},
\]
where again $k \otimes B\simeq B\simeq B\otimes k$. Let now $(A, \mu_A, \eta_A)$ be an algebra and $(B, \mu_B, \eta_B, \Delta, \epsilon)$ be a bialgebra. Let $f,g \in Hom_k(B,A)$ be $k$-algebra homomorphisms. We introduce a product on the set of such homomorphisms as follows
\begin{equation}\label{conv}
\begin{split}
\star: Hom_k(B,A) \times Hom_k(B,A) \rightarrow Hom_k(B,A)\\
(f,g) \mapsto \mu_A \circ (f\otimes g)\circ \mu_B.
\end{split}
\end{equation}
It an be shown (cf. loc. cit.,\ Lemma I.9) that $(Hom_k(B,A), \star, \eta_A\circ \epsilon_B)$ defines an algebra, $\star$ is called the convolution product. We further define morphisms of bialgebras as those algebra morphisms $f: (B, \mu_B, \eta_B, \Delta, \epsilon)\rightarrow (B', \mu_B', \eta_B', \Delta', \epsilon')$ that intertwine comultiplication and counit, that is
\[
\Delta '\circ f= f \otimes f\circ \Delta: B \rightarrow B'\otimes B, \quad \epsilon'\circ f=\epsilon.
\]
We are now in the position to define the notion of Hopf algebra:
\begin{Def}
A Hopf algebra is a bialgebra $H$ together with a linear map $S:H \rightarrow H$ (called the antipode) that satisfies
\begin{equation}\label{hopfalg}
S\star {\rm Id}_H =\eta \circ \epsilon= {\rm Id}_H \star S.
\end{equation}
\end{Def}
{\it Remark.} Since $S$ is by definition the inverse of ${\rm Id}_H$ wrt the convolution product in $Hom_k(H,H)$, it is unique, as inverses in algebras are unique.\\
Let now $B$ be a bialgebra and $\tau: B\otimes B \rightarrow B\otimes B$ be the linear involution so that $\tau(a\otimes b)=b \otimes a,\ a,b \in B$. Let $\Delta^{op}$ be defined by $\Delta^{op}=\tau \circ \Delta$. $B$ is called {\it cocommutative} if $\Delta^{op}=\Delta$. Let in the same situation be $\mu^{op}_B=\mu_B\circ \tau$. We have the following Proposition (cf. \cite{hopf}, Prop. I.26)
\begin{prop}\label{dualityhopf}
Let $H$ be a Hopf algebra, then $S: (H, \mu, \eta, \Delta, \epsilon)\rightarrow (H, \mu^{op}, \eta, \Delta^{op}, \epsilon)$ is a morphism of bilagebras, in other words we have for any $x,y \in H$
\[
S(xy)=S(y)S(x), \  s(1)=1, \ \epsilon(S(x))=\epsilon\ {\rm and}\ \Delta^{op}\circ S= (S\otimes S)\circ \Delta.
\]
\end{prop}
Note further that a morphism $f$ of Hopf algebras $H$ and $H'$, which is by definition a morphism of the underlying bialgebras, automatically satsifies $f \circ S'= S\circ f$. Note also that given a finite dimensional Hopf algebra $H$, then its algebraic dual $H^*$ carries a canonical bialgebra structure and furthermore, $B^*$ is a Hopf algebra with antipode $S^*$, the transpose of $S$. In this case, the canonical isomorphism $i:H \rightarrow H^{**}$ is in fact also an isomorphism of Hopf algebras (loc. cit., Prop. I. 12, 15, 28).\\
Recall that a (left-)module over an algebra $B$ is a vector space $M$ endowed with a map $\mu_M: B \times M \rightarrow M$ so that $\mu_M$ is associative, that is $\mu_M \circ (\mu \circ Id)= \mu_M \circ (Id\circ \mu_M):B \times B\times M\rightarrow M$ and one has $\mu_M\otimes(\eta\otimes Id)=Id$ if we again identify $k \otimes M \simeq M$. Similarly, a right module over $B$ is a vector space $M$ endowed with a map $\mu_M: M \times B \rightarrow M$ that $\mu_M$ is associative in the same sense and one has $\mu_M\otimes(Id\otimes \eta)=Id$. A bicomodule over $B$ is a module that is both a right and left-module and the left and rght actions commute. The following notion of {\it left/right comodule} in a sense dualizes these notions.
\begin{Def}\label{comodule}
Let $B$ be a bialgebra. A left comodule is a pair $(M, \rho_M)$ where $M$ is a vector space and $\rho_M$ a linear map $\rho_M: M \rightarrow B \otimes M$ satisfying coassociativity $(\Delta_M\otimes Id)\circ\rho_M= (Id\otimes \rho_M)\circ\rho_M$ and being compatible with the counit of $B$ in the sense of $(\epsilon \otimes Id)\circ \rho_M=Id$ where again we identify $k \otimes M\simeq M$. A right comodule is similarly a pair $(M, \rho_M)$ where $\rho_M: M \rightarrow M\otimes B$ and $(\rho_M\otimes Id)\circ\rho_M= (Id\otimes \Delta_M)\circ\rho_M$ and $(Id\otimes \epsilon)\circ \rho_M=Id$. A left and right-comodule whose structure maps commute will be called a bicomodule.
\end{Def} 
{\it Remarks.} Note that $B$ is a bicomodule over itself, with structure map given by $\Delta_B$. Note that if $M$ is a left $B$-module, then $M$ is also a $B$ right module by the action of $S$, that is $\mu_M(m, b)=S(b)m,\ \forall b \in B,\ m \in M$ defines a right action of $B$ on $M$. This follows from Proposition \ref{dualityhopf}. Furthermore, $M$ being a left $B$-module implies that its algebraic dual, $M^*$, is a right $B$-module by setting $\mu_M(m^*, b)=m^*(bm),\ \forall b \in B,\ m^* \in M,\ m \in M$. Hence $M^*$ is a left $B$-module via $S$.\\
Let $M, N$ be two left $H$-modules over the Hopf algebra (or bialgebra) $H$. Then $M\otimes N$ is a left $H$-module via the left action of $\Delta$, that is
\begin{equation}\label{diagact}
\mu_{M\otimes N}(h, m\otimes n)= ((\mu_M(\cdot,m) \otimes \mu_N(\cdot, n))\circ \Delta(h),\ \forall m \in N,\ m \in M,\ h \in H.
\end{equation}
Dually, let $M, N$ be two left comodules over a Hopf algebra (or bialgebra) $H$, then $M\otimes N$ is a left $H$-comodule via the diagonal coaction 
\begin{equation}\label{codiagact}
\rho_{M \otimes N}(m \otimes n)=(\mu_M \otimes Id)\circ (Id \otimes \mu_N \otimes Id) (\rho_{M} \otimes \rho_N)(m \otimes n),
\end{equation}
or in Sweedler notation (cf. loc cit) $\rho_{M \otimes N}(m \otimes n)= \sum_{(m),(n)}m_{(-1)} n_{(-1)} m_{(0)}\otimes n_{(0)}$. Finally note that a morphism of left comodules $M, N$ is a linear map $f: M \rightarrow N$ so that $(Id \otimes f) \circ \rho_M= \rho_N \circ f$, similarly for right comodules and bicomodules. We finally arrive at the definition of {\it Hopf module}. We emphasize that (\ref{diagact}) as well as the formula for the diagonal coaction make sense for (co-)modules $M, N$ which are defined over distinct (non-isomorphic) Hopf algebras $H_1, H_2$ whose underlying bialgebras are isomorphic ($H\simeq H_1\simeq H_2$ as bialalgebras) since the explicit form of the respective antipodes $S_1, S_2$ does not enter the definitions.
\begin{Def}
Let $H$ be a Hopf algebra. A left Hopf module over $H$ is a left $H$-module $M$ that is also a left $H$-comodule whose characteristic map $\rho_M: M \rightarrow H \otimes M$ is a morphism of $H$-module structures, where the $H$-module structure on $H \otimes M$ is the diagonal structure alluded to above. A morphism of left Hopf modules is a morphism of left $H$-modules that is also a morphism of left $H$-comodules, as defined above. The definition of right Hopf module and Hopf bimodule are similar.
\end{Def}
{\it Remark.} Note that $H$ is a (left and right) Hopf module over itself with coaction $\Delta$. On the other hand, given a left $H$-module $M$, $H\otimes M$ is a left Hopf module over $H$ with $H$-action the diagonal left action of $H$ on $H \otimes M$ (cf. (\ref{diagact})) and with left coaction $\Delta \otimes Id$. It is the latter Hopf module structure over a left $H$-module $M$ that we will exploit in the following.

Consider now the following structure: let $H^*$ be the symplectic Clifford algebra, $H_{\C}^*$ its complexification, which is an algebra over $\C$ so that $H_{\C}^1$ has the structure of a symplectic vector space $(H_{\C}^1, \omega)$ over $\C$. Let $L$ be a complex Lagrangian subspace $L\subset H_{\C}^1$ with the property that the subalgebra over $\R$ $L^*\subset H_{\C}^*$ generated by the elements of $L$ is commutative. Then $B=(L^*, \mu, \eta)$ is an (infinite-dimensional) graded algebra over $\R$ whose degree one part $L=L^1$ carries the structure of a Lagrangian subspace of a symplectic vector space $(H^1, \omega)$ over $\C$. Assume that there is a canonical $\R$-linear isomorphism $\phi:H^1\simeq L$, then $\phi$ inherits on the real vectorspace $H^*$ the structure of the commutative algebra $B$, this structure carries over to $H_{\C}^*$ complex-linearly. We will denote this latter commutative algebra structure on $H_{\C}^*$ in the following by $H_c^*$. Let further be $\hat H=(H_c^*, \mu, \eta, \Delta, \epsilon, \hat S)$ be a (commutative and cocommutative) Hopf algebra with underlying algebra structure $B$ and assume that $S_0, S_1$ are graded bialgebra homomorphisms $S_i: (H_c^*, \mu, \eta, \Delta, \epsilon)\rightarrow (H_c^*, \mu^{op}, \eta, \Delta^{op}, \epsilon),\ i \in \{0,1\}$ with the notation of Proposition \ref{dualityhopf}, i.e. the $S_i,\ i=0,1$ are $B$-antihomomorphisms, and that $S_2$ is a graded bialgebra homomorphism $S_2\in Aut(H_c^*, \mu, \eta, \Delta, \epsilon)$, i.e. $S_2$ is a $B$-homomorphism. Consider two left Hopf modules $(M_0, \mu_{M_0}, \rho_{M_0})$ and $(M_1, \mu_{M_1}, \rho_{M_1})$ over $\hat H$ that are both assumed to be $B$-modules. Assume furthermore the following:
\begin{enumerate}
\item $\Delta(H^1_c)\subset L \otimes Id_{\hat H_c^1} \oplus Id_{H_c^1} \otimes L$, where $L\subset H_{\C}^1$ is the complex {\it Lagrangian} subspace wrt $\omega$ on $H_{\C}^*$ generating the commutative subalgebra $B\subset H_{\C}^*$ alluded to above.
\item Assume that the $B$-antihomomorphisms $\hat S, S_0, S_1 \in {\rm Aut}_{\C}(B,B^{op})$ and the $B$-homomorphism $S_2 \in {\rm Aut}_{\C}(B)$ introduced above satisfy the relations
\[
\hat S^2=S_0^2= S_1^2= S_2^2= Id_{H_c}, \ S_0= S_2S_1S_2, \ S_0S_1|_{H_c^1}= -Id_{H_c^1}= \hat S|_{H_c^1}.
\]
\item We have
\[
(\hat S \otimes \hat S) \Delta= \Delta^{op}.
\]
\item $M_1=H_c \otimes (N_0\otimes N_0^*)$ for some left $H_c$-module $(N_0, \mu_{N_0})$. $H_c$ acts on $N_0^*$ from the left using $\mu_{N_0}\circ S_0$. $N_0\otimes N_0^*$ carries the diagonal left $H_c$-action (\ref{diagact}), which we call $\mu_{N_0\otimes N_0^*}$. $M_1$ carries the diagonal left $H_c$-action $\mu_{M_1}$ given by $\mu_{N_0\otimes N_0^*}$ and the natural left action of $H_c$ on itself. $M_1$ is then a left Hopf module with coaction $\rho_{M_1}=\Delta \otimes Id$.
\item As a set, $M_0=H_c \otimes (\hat N_0\otimes \hat N_0^*)$ for some left $H_c$-module $(\hat N_0, \mu_{N_0})$. Let $H_c$ act on $\hat N_0$ from the left using $\mu_{\hat N_0}\circ (S_2)$. Let $H_c$ act on $\hat N_0^*$ from the left using $\mu_{\hat N_0}\circ (S_1S_2)$. $\hat N_0\otimes \hat N_0^*$ carries the diagonal left $H_c$-action (\ref{diagact}), which we call $\tilde \mu_{\hat N_0\otimes \hat N_0^*}$. $M_0$ carries the diagonal left $H_c$-action $\mu_{M_0}$ given by $\mu_{M_0\otimes M_0^*}$ and the natural left action of $H_c$ on itself. $M_0$ is then a left Hopf module with coaction $\rho_{M_0}=\Delta \otimes Id$.
\item Let $D_4$ be the dihedral group (symmetry group of the square), let $r_1$ represent a generator for the subgroup of rotations $R_4\subset D_4$, let $s_1$ and $s_2$ be two non-consecutive reflections in $D_4$. Then there is a homomorphism $\theta: D_4 \rightarrow {\rm Aut}(H_c^1)$ so that
\[
\theta(s_1)=S_0, \ \theta(s_2)=S_1, \ \theta(r_1)=S_2.
\]
In especially, the subgroup of ${\rm Aut}(H_c^1)$ generated by the image of $\theta$ can be viewed as a subquotient of the automorphism group of the quaternions $Q$, since there is a monomorphism $\tau:D_4\rightarrow S_4\simeq {\rm Aut}(Q)$.

\end{enumerate}
\begin{Def}\label{quaternionic}
A structure $(H^*_c, \omega, \mu, \eta, \Delta, \epsilon, \hat S, S_0, S_1, S_2, N_0, \hat N_0, M_0, M_1, \theta)$ as described above is called (commutative and cocommutative) {\it dihedral Lagrangian Hopf module}. 
\end{Def}

{\it Remark.} Note that the two 'opposite' Hopf algebras $\hat H$ and $\hat H^{op}$ are easily seen to be isomorphic using the isomorphism of graded Hopf algebras $\hat S:\hat H \rightarrow \hat H^{op}$, which simply comes down to $-Id$ restricted to $H_c^1$ (recall that $-Id_{H_c^1}= \hat S|_{H_c^1}$ by axiom 2.). In this sense, this natural isomorphism (antipode) $\hat S$ is factorized on $H^1$ by the requirement $S_0S_1|_{H_c^1}= -Id_{H_c^1}= \hat S|_{H_c^1}$ (note however that $S_0S_1$ and $\hat S$ define on $H^*_c$ distinct mappings, $\hat S$ being a $B$-antihomomorphism and $S_0S_1$ being a $B$-homomorphism). From this viewpoint, the Theorems \ref{hopf} and \ref{theorem2} seem to stand in a somewhat 'hidden' relation to the 'Theorem of the cube' (cf. Mumford \cite{mumford}) and related theorems in the theory of theta functions.

We then have the following almost immediate proposition:

\begin{prop}
$M=M_0\otimes M_1$ is a Hopf module over the Hopf algebra $\hat H$, where $M$ carries the diagonal left $H_c$-action $\mu_M$ induced by $(\Delta_0, \mu_{M_0}, \mu_{M_1})$ using (\ref{diagact}) and the codiagonal left action $\rho_M$ induced by $(\rho_{M_0}, \rho_{M_1}, \mu_{M_0}, \mu_{M_1})$ using (\ref{codiagact}).
\end{prop}
\begin{proof} Note there is something to prove here since $(M, \mu_{M_1}, \rho_{M_1})$ is a priori no Hopf module over $H_0$. But it follows immediately from coassociativity of $\Delta_0$ and associativity of $\mu_{M_0}, \mu_{M_1}$ that the associativity in (\ref{diagact}) holds. On the other hand, coassociativity of $\rho_M$ then follows using the formula (\ref{codiagact}), the fact that $\Delta_0$ and $\Delta_1$ are related by 3. in the above axioms and the fact that $S_0, S_1$ are graded algebra homomorphisms, we omit the calculations here to the reader.
\end{proof}
Before we proceed, we have to take a short digression to clarify the notation. We have defined in the proof of Proposition \ref{classi} resp. in Theorem \ref{genclass} a certain deformation of a given $\nabla$-parallel almost complex structure $J$ on $M\setminus \mathcal{C}$, where $\mathcal{C}$ is the critical set of a certain smooth closed section $s:M \rightarrow T^*M$. This deformation depends on $s$ and furthermore on a chosen $O(n)$-reduction of the symplectic frame bundle $R$ of $(M,\omega)$. The endpoint of this deformation in $\mathcal{J}(M\setminus \mathcal{C})$, together with the primordial closed section of $P_{G/\tilde G}$, defines a (rigid) singular Frobenius structure in the sense of Definition \ref{frobenius}. Note that in the general (non-Kaehler) case, closed sections of $P_{G/\tilde G}$, with the notation of Theorem \ref{genclass} will not define a regular Frobenius structure, since the spinor connection $\nabla$ induced by a given $U(n)$-reduction of $R$ associated to a given parallel almost complex structure $J$ does in general not preserve sections of the complex line bundle $\mathcal{E}_M$ described in the proof of Theorem \ref{genclass}. However, we might want to speak about such structures in distinction to the (rigid) singular structures alluded to above. 
\begin{Def}
A $5$-tuple $(\mathcal{L}, \mathcal{A}, \nabla, <\cdot,\cdot>, \mathcal{E})$ on a general symplectic manifold $(M,\omega)$ satisfying all axioms of Definition \ref{frobenius} except $\nabla \Gamma(\mathcal{L})\subset \Gamma(\mathcal{L})$ will be called a pre-Frobenius structure. If a pre-Frobenius structure $(\mathcal{L}, \mathcal{A}, \nabla, <\cdot,\cdot>, \mathcal{E})$ results from a closed section $s$ of $P_{G/\tilde G}$ in the sense of Proposition \ref{classi} and Theorem \ref{genclass} resp. the discussion above we will call it standard. In this situation, the (rigid) singular standard Frobenius structure induced by $s$ by the same proposition and theorem will be called the Frobenius structure associated to the implied standard pre-Frobenius structure.
\end{Def}
The following theorem is formulated for pre-Frobenius structures associated to closed sections $s$ of $P_{G/\tilde G}$ which are $C^1$-small and their associated (rigid) singular Frobenius structures. The formulation for the latter (the singular case) is preliminary here since a more meaningful Theorem can be proven when the appropriate notions of matrix factorization have been developed in the next section. For the same reason, we will ourselves limit here to the case of 'canonical' dual pairs in the sense of Theorem \ref{hopf}, while the more general dual pairs of Theorem \ref{theorem2} will be examined after the appropriate notion of matrix factorization in Section \ref{matrixfact2} is in place.
\begin{theorem}
To each (standard) dual pair of irreducible (in general weak) generalized standard pre-Frobenius structures $(\Omega, \mathcal{L})$ resp. $(\hat \Omega, \hat {\mathcal{L}})$ in the sense of Definition \ref{frobenius} and Theorem \ref{hopf} resp. to its singular counterpart we can associate a (commutative and cocommutative) {\it dihedral Lagrangian Hopf module} in the sense of Definition \ref{quaternionic} in a canonical (that is essentially unique) way.
\end{theorem}

\subsubsection{Matrix factorization}\label{matrixfact2}
Before we address the questions posed at the end of Section \ref{semic} we will still give some brief remarks on another aspect of Theorem \ref{theorem2}, commonly known as 'matrix factorization'. In the literature (\cite{eisenbud}, \cite{orlov} and references therein), the category of matrix factorizations is for instance understood as consisting of a differential $\Z/2\Z$-graded category $DB_{w_0}(W)$ that on the level of objects consists of ordered pairs $\overline P=(P_1, P_0)$ of (finitely generated, projective) $A$-modules on an affine scheme $X={\rm Spec}(A)$ and pairs of morphisms $\overline p=(p_1,p_0), p_1:P_1 \rightarrow P_0, p_0: P_0 \rightarrow P_1$ so that there is a distinguished point $w_0 \in X$ and a flat morphism $W:X \rightarrow \mathbb{A}^1$ satisfying
\begin{equation}\label{matrixfact}
p_1\circ p_0 = p_0\circ p_1 =W - w_0,
\end{equation}
where we consider $W,w_0 \in A$. Morphisms are given by the $\Z/2\Z$-graded complex $\mathbb{H}(\overline Q, \overline P)=\bigoplus_{i,j} Hom(Q_i,P_j)$ with grading $(i-j) {\rm mod}\ 2$ and differential $D$ acting on morphisms of degree $k$ as $Df=\overline q\circ f -(-1)^kf \circ \overline p$. Note that for a given pair $\overline P=(P_1, P_0)$ of $A$-modules and pairs of $A$-module maps $\overline p=(p_1,p_0), p_1:P_1 \rightarrow P_0, p_0: P_0 \rightarrow P_1$ and an element $x \in A$ so that $p_1\circ p_0=(x){\bf 1}_{P_0}$ and $p_0\circ p_1=(x){\bf 1}_{P_1}$ we can associate a $2$-periodic complex over the ring $B=A/(x)$
\begin{equation}\label{matrixf}
\mathcal{P}(p_1, p_0): \dots \rightarrow \overline P_1\rightarrow \overline P_0\rightarrow \overline P_1\rightarrow {\rm coker}(p_0)\rightarrow 0
\end{equation}
which is $B$-free and exact if $(x)/(x)^2$ is free over $B$ and thus a resolution of ${\rm coker}(p_0)$, where $\overline {(\cdot)}$ denotes reduction ${\rm mod} (x)$. For any $B$-module $E$, we define $H^*(\overline P, E)$ as the $\Z_2$-graded (co)homology group of the image of the complex $\mathcal{P}(p_1, p_0)$ under $Hom(\cdot, E)$.
\\
Consider now the maps $\hat s_l, \hat s_2: M\rightarrow \hat P_{G/\tilde G}\simeq P_{G/\tilde G}\simeq \hat P/\tilde G$ as introduced above Theorem \ref{theorem2} defining a pair of (singular) irreducible (generalized) standard Frobenius structures with associated closed sections $s_l, s_2: M\rightarrow T^*M$ using $P_G= (\pi_P^*(T^*M) \times_{Mp(2n, \R)} P)\simeq \hat P_G=P\times_{Mp(2n, \R), {\rm Ad}} G$ and a fixed $\tilde G$-reduction of $P$. As above, we can associate a map $\overline T:P\rightarrow Mp(2n, \R)/\hat U(n)$ so that $\pi_{Mp}(\hat s_l).\overline T(p)=\pi_{Mp}(\hat s_2),\ p\in P$ if we identify $\hat s_l, \hat s_2:P_G\rightarrow G/\tilde G$. On the other hand we denote by $\mathcal{T}_{l,2}$ the set of fibrewise linear symplectic (smooth) vector bundle automorphisms of $T^*M\setminus \pi^{-1}(\mathcal{C}_{l,2})$ covering the identity on $M$ (we can here consider the bundle $T^*M$ as a symplectic fibre bundle over $M$, using the identification by $\omega:T^*M \simeq TM$ and thus identifying the vertical bundle $V^*M\subset T(T^*M)$ with $TM$) and satisfying
\begin{equation}\label{linearmodule}
T_{l,2}\in {\rm End}_\omega(T^*M\setminus \pi^{-1}(\mathcal{C}_{l,2})),\ T_{l,2}(s_l(x)) =s_2(x),\ x \in \mathcal{C}_{l,2},
\end{equation}
where $\pi:T^*M \rightarrow M$ and $\mathcal{C}_{l,2}$ is the union of the (as we assume) transversal intersections of $s_l$ and $s_2$ with the zero-section of $T^*M$ (compare Definition \ref{frobenius}) which we assume to be a finite set of isolated points in $M$, in fact it follows that the $s_l, s_2$ are described locally by Morse type singularities. Then given an automorphism $g:P_G\rightarrow P_G$ covering the identity over $M$, that is a section of 
\begin{equation}\label{blaext}
P_G[G]=P_G \times_{G, Ad} G,\quad {\rm where} \ (p,g) \mapsto (pa, {\rm Ad}_a(g)), \ g,a \in G,
\end{equation}
satisfying $\hat s_l.g=\hat s_2$ and being represented by an $Ad$-equivariant map $\tilde g: P_G \rightarrow G$ via $j:(H_n\times_\rho Mp(2n, \R))/{\tilde G}\rightarrow (H_n\times_\rho Mp(2n, \R))/G_0$ to arrive at a map $\tilde T_{l,2}: P_G \rightarrow (H_n\times_\rho Mp(2n, \R))/G_0$. We recall that writing $P_G=\pi_P^*(TM) \times_{Mp(2n, \R)} P$ as defined in (\ref{tangent}) we denoted ${\rm pr}_1:P_G\rightarrow T^*M$ the map $pr_1((y,q), x), (p, x))=((gy,q),x),\ x \in M,\ y \in \R^{2n},\ p,q \in P_{\tilde G},\ q=p.g,\ g \in \tilde G$ (using the complex structure $J$ corresponding to $P_{\tilde G}$ to identify $TM\simeq T^*M$). We then claim
\begin{lemma}\label{affine}
For any $Ad$-equivariant map $\tilde g: P_G \rightarrow G$ satisfying $\hat s_l.\tilde g=\hat s_2$ for two given sections $\hat s_l, \hat s_2$ of $\hat P_{G/\tilde G}\simeq P_{G/\tilde G}\simeq \hat P/\tilde G$ and any section $\mathfrak{s}_1: T^*M \rightarrow P_G$ of ${\rm pr}_1:P_G\rightarrow T^*M$, that is ${\rm pr}_1\circ \mathfrak{s}_1 =Id$ there is a well-defined (symplectic affine-linear) automorphism $T_{l,2}\in {\rm End}_{\omega,{\rm aff}}(T^*M)$ of $T^*M$ covering the identity on $M$ so that $T_{l,2}={\rm pr}_1\circ g\circ \mathfrak{s}_1$. Outside of $\mathcal{C}_{l,2}$, $\tilde g$ can be chosen so that this automorphism is fibrewise symplectic linear, so $T_{l,2}\in \mathcal{T}_{l,2}$.
\end{lemma}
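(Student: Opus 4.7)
The plan is to treat the two assertions separately: first, the well-definedness of $T_{l,2}$ as a symplectic affine-linear automorphism of $T^*M$ covering the identity; then, the refinement showing that outside $\mathcal{C}_{l,2}$ the equivariant representative $\tilde g$ may be chosen so that $T_{l,2}$ is fibrewise symplectic linear.

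For the first part, I would start by unpacking the structure of $G = H_n \times_\rho Mp(2n,\R)$: the metaplectic factor acts on the fibre $\R^{2n}$ by $\R$-linear symplectic maps through $\rho:Mp(2n,\R)\to Sp(2n,\R)$, while the Heisenberg factor acts by translations, its center acting trivially. Consequently $G$ acts on the fibre $\R^{2n}$ of $\pi_P^*(T^*M)\simeq P\times_{Mp(2n,\R)}\R^{2n}$ by affine symplectic transformations. The $Ad$-equivariant map $\tilde g:P_G\to G$ corresponds via the extension (\ref{blaext}) to a gauge automorphism $g$ of $P_G$ covering the identity on $M$, hence $T_{l,2}={\rm pr}_1\circ g\circ\mathfrak{s}_1$ is a well-defined smooth self-map of $T^*M$ covering the identity. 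Its fibrewise action is precisely the affine symplectic transformation given by the value of $\tilde g$ at the chosen frame, which establishes the symplectic affine-linear structure and places $T_{l,2}$ in ${\rm End}_{\omega,{\rm aff}}(T^*M)$. The identity $T_{l,2}(s_l(x))=s_2(x)$ is then obtained by unravelling $\hat s_l\cdot\tilde g=\hat s_2$ through the identification ${\rm pr}_1\circ\hat s_l=s_l$ described above Theorem \ref{theorem2}.

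For the second part, I observe that the constraint $\hat s_l\cdot\tilde g=\hat s_2$ pins down $\tilde g$ only up to $Ad$-equivariant maps into the isotropy subgroup of $\hat s_l$ in $G/\tilde G$, leaving substantial freedom in the Heisenberg direction along $\mathrm{im}(s_l)$. Outside $\mathcal{C}_{l,2}$ both $s_l(x)$ and $s_2(x)$ are nonzero elements of $T^*_xM$, and since $Sp(2n,\R)$ acts transitively on $\R^{2n}\setminus\{0\}$, one can select, fibrewise, a purely linear element of $G$ (i.e.\ with trivial Heisenberg component) mapping $s_l(x)$ to $s_2(x)$. The stabilizer of a nonzero vector in $Sp(2n,\R)$ is a smoothly varying closed subgroup, so these pointwise choices assemble smoothly over $M\setminus\mathcal{C}_{l,2}$ into an $Ad$-equivariant modification of $\tilde g$ with trivial translation part along $\mathrm{im}(s_l)$, yielding the fibrewise symplectic linear $T_{l,2}\in\mathcal{T}_{l,2}$.

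The main obstacle, and the reason one must excise $\pi^{-1}(\mathcal{C}_{l,2})$, is twofold: establishing that the pointwise trivialisation of the Heisenberg component of $\tilde g$ can be carried out smoothly and equivariantly over all of $M\setminus\mathcal{C}_{l,2}$, and explaining why the procedure fails on $\mathcal{C}_{l,2}$ itself. The latter is conceptually the delicate point: at a point where both sections vanish, $Sp(2n,\R)$ fixes $0$ and hence cannot, by a linear transformation, match the local behaviour of $s_l$ with that of $s_2$; the required transformation acquires an essential Heisenberg translation component encoded in the Morse-type Hessians of the two sections, which is precisely the obstruction preventing the extension of the fibrewise linear $T_{l,2}$ across $\mathcal{C}_{l,2}$.
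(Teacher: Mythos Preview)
Your proposal is correct and takes essentially the same approach as the paper: the first assertion follows from the well-definedness of the constituent maps ${\rm pr}_1$, $g$, $\mathfrak{s}_1$ together with the fact that $G$ acts fibrewise by affine symplectic transformations, and the second from the transitivity of $Sp(2n,\R)$ on $\R^{2n}\setminus\{0\}$ (any nonzero vector extends to a symplectic basis). The paper's proof is much terser than yours but rests on precisely these two points.

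One minor correction to your closing discussion of the obstruction: recall that $\mathcal{C}_{l,2}=\mathcal{C}_l\cup\mathcal{C}_2$, and in fact the paper later assumes $\mathcal{C}_l\cap\mathcal{C}_2=\emptyset$, so at a point of $\mathcal{C}_{l,2}$ exactly one of $s_l,s_2$ vanishes, not both. The obstruction is thus that a linear symplectic map cannot send $0$ to a nonzero vector (or vice versa), forcing a nontrivial Heisenberg translation component; your Hessian remark is extra interpretation not present in the paper.
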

\begin{proof}
Of course the well definedness follows from the well-definedness of ${\rm pr}_1, g, \mathfrak{s}_1$. Outside of $\mathcal{C}_{l,2}$, $s_l$ and $s_2$ are given by sections of $P_G/\tilde G=(\pi_P^*(TM) \times_{Mp(2n, \R)} P)/\tilde G$ being locally represented by elements of the type $((y,q), x), (p, x), x \in M, 0 \neq y\in \R^{2n},\ q,p \in P$. Recall that $G$ acts on these elements from the right as in (\ref{tangentaction}), thus as
\begin{equation}\label{tangentaction2}
\tilde \mu: G\times P_G\rightarrow P_G, \ \tilde \mu\left((h, \hat g),(((y,q),p),x)\right)=\left(((\rho(g)^{-1}(y)+h, q.\hat g), p),x \right).
\end{equation}
Since $Sp(2n,\R)\subset G$ acts transitively on the set of vectors having the $\R^{2n}\setminus \{0\}$ (in fact $Sp(2n, \R)$ acts transitively on the set of $\omega_0$-symplectic bases and any non-zero vector in $\R^{2n}$ can be extended to a symplectic basis wrt $\omega_0$) thus on the set of elements of $P_G$ with local representatives $y \neq 0$ as above.
\end{proof}
Summarizing the above, we have associated to any pair of maps $\hat s_l, \hat s_2: M\rightarrow \hat P_{G/\tilde G}\simeq P_{G/\tilde G}\simeq \hat P/\tilde G$ with associated closed sections $s_l, s_2: M\rightarrow T^*M$ (using $P_G= (\pi_P^*(T^*M) \times_{Mp(2n, \R)} P)\simeq \hat P_G=P\times_{Mp(2n, \R), {\rm Ad}} G$) an equivariant map $\overline T:P\rightarrow Mp(2n, \R)/\hat U(n)$ as well as, fixing an $Ad$-equivariant map $\tilde g: P_G \rightarrow G$ satisfying $\hat s_l.\tilde g=\hat s_2$, an affine linear endomorphism $T_{l,2}\in {\rm End}_{\omega,{\rm aff}}(T^*M)$ of $T^*M$ covering the identity on $M$ so that outside of $\mathcal{C}_{l,2}$ we can assume $T_{l,2}$ to be fibrewise symplectic linear.\\
Consider now the decomposition $\mathcal{C}_{l,2}=\mathcal{C}_{l}\cup \mathcal{C}_{2}\subset M$ where $\mathcal{C}_{l}, \mathcal{C}_{2}$ are the sets of intersection of the above fixed closed sections $s_l, s_2: M\rightarrow T^*M$ with the zero section of $T^*M$, respectively, we assume $\mathcal{C}_{l}, \mathcal{C}_{2}$ to be isolated and finite (and $M$ to be compact or compact with boundary), furthermore we will assume in the following always that $\mathcal{C}_l\cap \mathcal{C}_2=\emptyset$. Consider that to any section $\hat s: M\rightarrow P_G$ associated to a closed section $s: M \rightarrow T^*M$ which intersects $M \subset T^*M$ transversally on $\mathcal{C}_s$ and an $\hat U(n)$-reduction of $P$ we can associate the $C^\infty(M \setminus \mathcal{C}_{s})$-module of smooth sections $\mathcal{E}(\mathcal{L}_s, M\setminus \mathcal{C}_{s})=\Gamma(\mathcal{L}_s)|M\setminus \mathcal{C}_{s})$ of the associated standard irreducible, in general singular (cf. Proposition \ref{classi}) Frobenius structure $(\Omega, \mathcal{L}_s)$ over the open set $M \setminus \mathcal{C}_{s}$ and also $\mathcal{E}(T^*M, M\setminus \mathcal{C}_{s})=\{s \in\Gamma(T^*(M\setminus \mathcal{C}_{s}))\}$, the $C^\infty(M \setminus \mathcal{C}_{s})$-module of (possibly singular) sections of $T^*M$ which are smooth over the open set $M\setminus \mathcal{C}_{s}$. Now let $T \in {\rm End}_\omega(T^*M\setminus \pi^{-1}(\mathcal{C}_{l,2}))$ be a linear automorphism of $T^*M\setminus \pi^{-1}(\mathcal{C}_{l,2})$ covering the identity and satisfying (\ref{linearmodule}), it is clear that $T$ defines an endomorphism of $\mathcal{E}(T^*M, M\setminus \mathcal{C}_{l})$ into $\mathcal{E}(T^*M, M\setminus \mathcal{C}_{2})$ induced by continuation by zero through $\mathcal{C}_2$. Let now for a multiindex $r=(r_1, \dots r_k), k=|\mathcal{C}_{l}|, r_i \in \N^+$ be $\mathcal{M}^r(T^*M, \mathcal{C}_{l}), r_i\geq 0, r_i \in \N$ the submodule of $\mathcal{E}(T^*M, M)$ generated by the $r$-th power of the ideal $\mathfrak{p}_l=C^\infty(M ,\mathcal{C}_{l})$ of smooth functions in $C^\infty(M, \C)$ on $M$ that vanish on $\mathcal{C}_{l}$, that is if $\mathfrak{p}_l=\Pi_{i=1}^k\mathfrak{m}_{x_i}, x_i \in \mathcal{C}_l$, $\mathfrak{m}_{x_i}$ maximal ideals at the $x_i \in \mathcal{C}_l$, we set
\[
\mathfrak{p}^r_l=\Pi_{i=1}^k\mathfrak{m}^{r_i}_{x_i}
\]
and consider $\mathcal{M}^r(T^*M, \mathcal{C}_{l})$ as a $\mathfrak{p}_l$-submodule of $\mathcal{E}(T^*M, M\setminus \mathcal{C}_{l})$ resp. a $C^\infty(M, \C)$-module of $\mathcal{E}(T^*M, M\setminus \mathcal{C}_{l})$ by restriction, analogously for the pair $(\hat s_2, \mathcal{C}_2)$. Then any element $T \in {\rm End}_\omega(T^*M\setminus \pi^{-1}(\mathcal{C}_{l}))$ whose singularity at $\mathcal{C}_l$ is annihilated by some element $\mathfrak{c}$ of $\mathfrak{p}^r_l$ satisfies $T(\mathfrak{c}\mathcal{M}^0(T^*M, \mathcal{C}_{l}))\subset  \mathcal{E}(T^*M, M\setminus \mathcal{C}_{2})$, where $r$ is the pole order of $T$, to be defined below and $\mathfrak{c}^{-1}$ is regarded as an element of the quotient field of $R=C^\infty(M,\C)$. We will denote the subset of ${\rm End}_\omega(T^*M\setminus \pi^{-1}(\mathcal{C}_{l}))$ whose 'singularities at $\mathcal{C}_{l}$ are annihilated' near $\mathcal{C}_l$ by appropriate elements of $\mathfrak{p}_l^r$ by ${\rm End}^r_\omega(T^*M\setminus \pi^{-1}(\mathcal{C}_{l}))$ (analogously for $\mathcal{C}_2$). Note that 'being annihilated near $\mathcal{C}_l$' means here and in the above that there exists a $g \in \mathfrak{p}_l^r$ resp. for any $x_i \in \mathcal{C}_l$ an element $g_{x_i}$ of the power $\mathfrak{m}^{r_i}_{x_i}$ of the maximal ideal of smooth functions at $x_i \in \mathcal{C}_l$ so that 
\begin{equation}\label{maximalideal}
(g_{x_i}T)(s) \in \mathcal{E}(T^*M, M),\ {\rm for\ all}\ s \in \mathcal{E}(T^*M, M), \ x_i \in \mathcal{C}_l, 
\end{equation}
where $T \in {\rm End}_\omega(T^*M\setminus \pi^{-1}(\mathcal{C}_{l}))$ and no element $g_{x_i}\in \mathfrak{m}^{r}_{x_i}$ with non-trivial image in $\mathfrak{m}^{r}_{x_i}/ \mathfrak{m}^{r_i}_{x_i}$ and $r< r_i$ will satisfy the above. To understand the prior condition better we can reformulate it in terms of 'formal Laurent expansions' using Whitney's theorem \cite{whitney}, that is we have:
\begin{lemma}\label{laurent1}
Let $T \in {\rm End}_\omega(T^*M\setminus \pi^{-1}(\mathcal{C}_{l}))$, then $T \in {\rm End}^r_\omega(T^*M\setminus \pi^{-1}(\mathcal{C}_{l}))$ if and only if for any $x_i \in \mathcal{C}_l$, there exist open sets $x_i\in U_{x_i}\subset M$ and diffeomorphisms $\Psi_{x_i}:(U_{x_i},x_i)\rightarrow (\R^n,0)$ so that if $\tilde T_{jk}\in C^\infty(\R^n\setminus\{0\})$ is any matrix entry of $\tilde T=D_{x_i}\Psi\circ T\circ D_0\Psi^{-1} \in End(T^*(\R^n\setminus \{0\}))$ we have 
\begin{equation}\label{laurent}
{\rm min}_{j,k}\{m\in \N: J_0^m(x^{r_i}\tilde T_{jk})\neq {\bf 0}\}=0.
\end{equation}
where $J_0^m: C^\infty(\R^n)\rightarrow T^m_0$ is the $m$-th degree Taylor polynomial of a smooth function on $\R^n$ at $0$ and $x^{r_i}$ is a homogeneous polynomial of degree $r_i$.
\end{lemma}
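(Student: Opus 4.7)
The plan is to reduce the global annihilator condition (\ref{maximalideal}) at each isolated singular point $x_i\in\mathcal{C}_l$ to a Taylor-jet statement in local coordinates, with Whitney's extension theorem \cite{whitney} supplying the translation between smooth germs on $M$ near $x_i$ and formal polynomial data at the origin of $\R^n$. Since $\mathcal{C}_l$ is finite and the condition (\ref{maximalideal}) is local at each $x_i$, I would work one chart at a time, passing to matrix entries $\tilde T_{jk}\in C^\infty(\R^n\setminus\{0\})$ via the chart $\Psi_{x_i}:(U_{x_i},x_i)\rightarrow(\R^n,0)$.

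For the forward direction, fix $x_i$ and let $g_{x_i}\in\mathfrak{m}^{r_i}_{x_i}$ be a minimal-order multiplier as in (\ref{maximalideal}), so that $\tilde g_{x_i}\tilde T_{jk}\in C^\infty(\R^n)$ for every $(j,k)$. I would decompose $\tilde g_{x_i}=p+q$, with $p$ the homogeneous component of degree $r_i$ of the Taylor series of $\tilde g_{x_i}$ at $0$ and $q\in\mathfrak{m}^{r_i+1}$. The minimality of $r_i$ built into the definition of ${\rm End}^r_\omega$ forces the existence of a choice of $g_{x_i}$ with $p\not\equiv 0$, since otherwise every admissible multiplier would lie in $\mathfrak{m}^{r_i+1}$ and the minimal vanishing order would strictly exceed $r_i$. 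A Hadamard expansion $q=\sum_{|\alpha|=r_i+1}x^\alpha h_\alpha$, combined with the structural form of $T$ given by Lemma \ref{affine} (namely $T=\mathrm{pr}_1\circ g\circ\mathfrak{s}_1$ for an $\mathrm{Ad}$-equivariant map $\tilde g$ into $G$), then shows that each monomial product $x^\alpha\tilde T_{jk}$ extends smoothly across $0$; consequently $q\tilde T_{jk}$ is smooth, and subtracting from $\tilde g_{x_i}\tilde T_{jk}=(p+q)\tilde T_{jk}$ yields smoothness of $p\tilde T_{jk}$. Setting $x^{r_i}:=p$ identifies the required homogeneous polynomial. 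The nonvanishing of the lowest jet in (\ref{laurent}), i.e.\ $J_0^0(x^{r_i}\tilde T_{jk})\neq 0$ for some $(j,k)$, follows from minimality once more: if every $p\tilde T_{jk}$ vanished at $0$, then using $|p(x)|\geq c|x|^{r_i}$ off the zero locus of $p$ together with Hadamard in a suitably adapted coordinate chart, one would construct a working multiplier of strictly lower vanishing order, contradicting the definition of $r_i$.

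For the converse, assume (\ref{laurent}) with homogeneous polynomial $x^{r_i}$ of degree $r_i$ in the chart $\Psi_{x_i}$. Whitney's extension theorem applied at the closed set $\{x_i\}\subset M$, combined with a cut-off supported in $U_{x_i}$, produces a globally defined $g_{x_i}\in C^\infty(M)$ that coincides with $x^{r_i}\circ\Psi_{x_i}$ near $x_i$ and vanishes outside $U_{x_i}$; by construction $g_{x_i}\in\mathfrak{m}^{r_i}_{x_i}$. The assumption that $x^{r_i}\tilde T_{jk}$ extends smoothly across $0$ then gives $g_{x_i}T\in\mathcal{E}(T^*M,M)$ and thus the existence clause of (\ref{maximalideal}). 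Minimality is obtained by reversing the leading-term analysis: any smooth multiplier $g'$ of strictly smaller vanishing order would, upon extracting its leading Taylor polynomial $p'$ of degree $<r_i$, produce a homogeneous polynomial whose product with $\tilde T$ is smooth, and multiplication by a homogeneous polynomial of degree $r_i-\deg p'$ would then force $x^{r_i}\tilde T_{jk}$ to vanish at $0$, contradicting (\ref{laurent}).

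The main technical obstacle is the asymmetry between smooth germs, used to define ${\rm End}^r_\omega$, and polynomials, used in the Laurent condition (\ref{laurent}): division by a smooth function is not an algebraic operation in $C^\infty(M)$, so one cannot simply ``divide out'' the higher-order tail $q$ from $\tilde g_{x_i}$ to isolate its leading part $p$. Whitney's extension theorem together with Hadamard's lemma is precisely the infrastructure that manages this asymmetry — Whitney lifts polynomial Taylor data to genuine smooth functions on $M$, and Hadamard ensures that the tail $q$ contributes only smooth summands to $\tilde g_{x_i}\tilde T_{jk}$, the structural symplectic/equivariant form of $T$ coming from Lemma \ref{affine} being what actually controls the requisite monomial-times-matrix-entry products. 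Once this is in place, the equivalence collapses to elementary bookkeeping around the decomposition $\tilde g_{x_i}=p+q$ and the minimality of $r_i$.
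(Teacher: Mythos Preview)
Your overall architecture (localize at each $x_i$, push to $\R^n$, trade smooth multipliers for polynomial jets via Whitney) matches the paper, but the forward direction contains a genuine gap. The step where you invoke Lemma~\ref{affine} to conclude that each monomial product $x^\alpha\tilde T_{jk}$ with $|\alpha|=r_i+1$ extends smoothly across $0$ is unjustified: Lemma~\ref{affine} only asserts the existence of a well-defined affine-linear symplectic automorphism $T_{l,2}=\mathrm{pr}_1\circ g\circ\mathfrak{s}_1$; it says nothing about pole orders or about which monomials annihilate the singularity of the matrix entries. More fundamentally, Lemma~\ref{laurent1} is stated for an \emph{arbitrary} $T\in\mathrm{End}_\omega(T^*M\setminus\pi^{-1}(\mathcal C_l))$, not only for the specific $T_{l,2}$ produced by Lemma~\ref{affine}, so no structural form of $T$ is available in the proof. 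Without this, your decomposition $\tilde g_{x_i}=p+q$ stalls: you cannot subtract off $q\tilde T_{jk}$ because you have no independent reason to believe it is smooth. (Concretely, for $\tilde T_{jk}=1/x_1^{r_i}$ on $\R^n$ the monomial $x_2^{r_i+1}\tilde T_{jk}$ is \emph{not} smooth, so smoothness of individual monomial products certainly does not follow from general principles.)

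The paper sidesteps this difficulty by a choice of chart rather than by a decomposition argument: it takes $\Psi_{x_i}$ to be Riemannian normal coordinates for $(\omega,J)$, so that the pullbacks of the linear coordinate functions genuinely generate $\mathfrak m_{x_i}$. In such coordinates the smooth multiplier $g_{x_i}$ and the homogeneous polynomial $x^{r_i}$ are identified directly (the distance function restricted to coordinate axes \emph{is} the linear coordinate), and one reads off (\ref{laurent}) from smoothness of $(\Psi_{x_i}^{-1})^*((g_{x_i}T)(s))$ together with minimality of $r_i$, with no need to isolate a homogeneous leading part from a tail. Your converse direction is essentially fine and close to the paper's, though your use of Whitney there is gratuitous: since $x^{r_i}\circ\Psi_{x_i}$ is already a smooth function on $U_{x_i}$, a cutoff suffices; the paper instead uses Whitney to build a $C^m$ function with prescribed jets $J_0^m(x^{r_i}\tilde T_{jk})$ and compares it to $(g_{x_i}T)(s)$ up to arbitrary order.
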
 
\begin{proof}
We can assume that the preimages of the coordinate functions $(y_1, \dots, y_n)$ under the diffeomorphism $\Psi_{x_i}:(U_{x_i},x_i)\rightarrow (\R^n,0)$ actually generate the maximal ideal $\mathfrak{m}_{x_i}$ at $x_i \in \mathcal{C}_l$. This is because we can choose Riemannian normal coordinates induced by the pair $(J, \omega)$ on the appropriate nghbhds $U_{x_i}$ and the distance function on $U_{x_i}$, restricted to the coordinate axes of $\Psi_{x_i}^{-1}(\R^n)$, thus translates into the linear coordinate functions on $\Psi_{x_i}(U_{x_i})$. Then assuming (\ref{maximalideal}) the equation (\ref{laurent}) follows immediately since $(\Psi_{x_i}^{-1})^*((g_{x_i}T)(s))_i \in C^\infty(\R^n), i=1, \dots, n$ for $r=r_i$ if $g_{x_i} \in \mathfrak{m}^{r}_{x_i}$ and no $g_{x_i}\in \mathfrak{m}^{r}_{x_i}$ with non-empty image in $\mathfrak{m}^{r}_{x_i}/ \mathfrak{m}^{r_i}_{x_i}$ and $r< r_i$ will achieve smoothness. On the other hand, assuming (\ref{laurent}), taking the same chart $\Psi_x$ as above and constructing by Whitney's Theorem a (matrix) function $h_{jk} \in  C^m(\R^n)\ , m>0$ (in this case simply the Taylor polynomial to degree $m$) whose term-wise Taylor polynomial at zero coincides with $J_0^m(x^{r_i}\tilde T_{jk})$ for any fixed $m>0$ we see that $(\Psi_{x_i}^{*})(x^{r_i}h.\Psi_{x_i}(s))$ on $U_{x_i}$ sufficiently small actually coincides with $(g_{x_i}T)(s)|U_{x_i}$ (up to arbitrarily high order $m$) for a given $s \in \mathcal{E}(T^*M, M)$ and thus we also have $(g_{x_i}T)(s)\in \mathcal{E}(T^*M, M)$.
\end{proof}
In other words, the entries of $T$ at each $x_i \in \mathcal{C}_l$ have formal Laurent expansions with maximal pole order $r_i$. Let for a given fixed $T \in {\rm End}^r_\omega(T^*M\setminus \pi^{-1}(\mathcal{C}_{l}))$ denote from now on $\mathcal{M}^{r, T}(T^*M, \mathcal{C}_{l})\subset \mathcal{M}^r(T^*M, \mathcal{C}_{l})$ the set of elements in $\mathcal{M}^r(T^*M, \mathcal{C}_{l})$ that actually satisfy an equation similar to (\ref{maximalideal}), that is $s \in \mathcal{M}^{r, T}(T^*M, \mathcal{C}_{l}) \subset \mathcal{M}^r(T^*M, \mathcal{C}_{l})$ if and only if
\begin{equation}\label{maximalideal3}
T(s) \in \mathcal{E}(T^*M, M),  
\end{equation}
it is a $C^\infty(M, \C)$-module. Note that the existence of a finite $r \in \N^k_+$ so that a given ${\rm End}_\omega(T^*M\setminus \pi^{-1}(\mathcal{C}_{l}))$ is in ${\rm End}^r_\omega(T^*M\setminus \pi^{-1}(\mathcal{C}_{l}))$ has to be assumed, that is in the following we will assume that the following is valid:
\begin{asslemma}
Let $\hat s_l, \hat s_2: M\rightarrow \hat P_{G/\tilde G}$ wrt a given $\tilde G \subset \hat U(n)$-reduction of $P$ be given so that for the associated closed sections $s_l, s_2: M\rightarrow T^*M$ and  $T_{l,2} \in {\rm End}_\omega(T^*M\setminus \pi^{-1}(\mathcal{C}_{l}))$ satisfying (\ref{linearmodule}) there exists a finite $r \in \N_+^k$ so that in fact $T_{l,2} \in {\rm End}^r_\omega(T^*M\setminus \pi^{-1}(\mathcal{C}_{l}))$, which is the case if and only if there exist for any $x \in M$ a sufficiently small nghbhd $U_x\subset M$ and Riemannian normal coordinates on $U_x$ wrt to the chosen tupel $(\omega, J)$ so that with $r=(r_1, \dots, r_k) \in N_+^k$ one has that (\ref{laurent}) is satisfied.
\end{asslemma}
Of course the proof of this last assertion follows immediately from Lemma \ref{laurent1}. We then get for $r=(r_1, \dots r_k),\ r_i>0$ by continuous extension a map $\Psi: {\rm End}^r_\omega(T^*M\setminus \pi^{-1}(\mathcal{C}_{l}))\rightarrow \mathcal{P}(\mathcal{M}^r(T^*M, \mathcal{C}_{l}))$, where $\mathcal{P}(M)$ denotes the power set of a set $M$ given by $\Psi(T)=\mathcal{M}^{r, T}(T^*M, \mathcal{C}_{l})$ and if ${\rm ev}:\mathcal{P}(\mathcal{M}^r(T^*M, \mathcal{C}_{l}))\rightarrow \mathcal{M}^r(T^*M, \mathcal{C}_{l})$ is the map that assigns to each subset the set of its elements we arrive at a well-defined map
\[
\mathcal{T}: {\rm graph}(({\rm Id},{\rm ev})\circ \Psi)\subset {\rm End}^r_\omega(T^*M\setminus \pi^{-1}(\mathcal{C}_{l}))\times \mathcal{M}^r(T^*M, \mathcal{C}_{l})\rightarrow \mathcal{E}(T^*M, M\setminus \mathcal{C}_{2}),
\]
where $\mathcal{C}_{2}$ is arbitrary and $\mathcal{M}^r(T^*M, \mathcal{C}_{l})$ is generated as described priorly by $\mathfrak{p}^r_l= \Pi_{i=1}^k\mathfrak{m}^{r_i}_{x_i}$ in $\mathcal{M}(T^*M, \mathcal{C}_{l}):= \mathcal{M}^{\bf 1}(T^*M, \mathcal{C}_{l})$ for some appropriate $r=(r_1, \dots, r_k)$ and ${\bf 1}=(1, \dots, 1)$. It is then clear that for a fixed $T \in {\rm End}^{\bf 1}_\omega(T^*M\setminus \pi^{-1}(\mathcal{C}_{l})$, the implied endomorphism $T:\mathcal{M}^T(T^*M, \mathcal{C}_{l})=\mathcal{M}^{{\bf 1},T}(T^*M, \mathcal{C}_{l})\rightarrow \mathcal{E}(T^*M, M\setminus \mathcal{C}_{2})$ has cokernel isomorphic to $\mathcal{E}(T^*M, M\setminus \mathcal{C}_{2})/\mathcal{M}^T(T^*M, \mathcal{C}_{l})\simeq \C^k$ if we consider $\mathcal{M}^T(T^*M, \mathcal{C}_{l})\subset \mathcal{E}(T^*M, M\setminus \mathcal{C}_{2})$ and assume $\mathcal{C}_l\cap \mathcal{C}_k=\emptyset$. Consider now a map $T_{l,2}\in {\rm End}_{\omega,{\rm aff}}(T^*M)$ associated to the sections $\hat s_l, \hat s_2: M\rightarrow \hat P_{G/\tilde G}$ as described above, we will see below (Lemma \ref{ass3}) that we can consider it as a fibrewise linear map $T_{l,2} \in {\rm End}^r_\omega(T^*M\setminus \pi^{-1}(\mathcal{C}_{l}))$ for some appropriate $r$ as above (see also Assumption/Lemma \ref{ass3} below) and thus as an endomorphism $T_{l,2}:\mathcal{M}^{r, T}(T^*M, \mathcal{C}_{l})\rightarrow \mathcal{E}(T^*M, M\setminus \mathcal{C}_{2})$. On the other hand, if $\mathcal{J}$ is the set of $\omega$-compatible almost complex structures on $M$, we can view $\overline T:P\rightarrow Mp(2n, \R)/\hat U(n)$ a priorily as a map $\overline T \in {\rm End}(\mathcal{J})$ that maps by definition $J$ to $J_T$ by using the pointwise identification $\mathcal{J}_x \simeq \mathfrak{h}$ and the corresponding action (by conjugation) of $Sp(2n, \R)$ on $\mathfrak{h}$ with stabilizer $U(n)$. Given $\hat s_l, \hat s_2:P_G\rightarrow G/\tilde G$ and assuming (as we will do throughout in the following) that $P^J_L$ and $P^{J_T}_L$ are isomorphic as $\hat O(n)$-reductions of $P$ we have a global section $\varphi$ of $(P^J_L)_{\hat O(n)}(Sp(2n, \R))=P^J_L\times_{\hat O(n)} Sp(2n, \R)$, where here $\hat O(n)$ acts by the adjoint mapping so that the diagram 
\begin{equation}\label{dia1}
\begin{diagram}
(P^J_L)_{\hat O(n)}(Mp(2n, \R))   &\rTo^{\sigma} &(P^J_L)_{\hat O(n)}(Mp(2n, \R)/\hat O(n))  \\
\uTo_{\varphi}     &\ruTo_{\pi_{Mp}\circ \hat s_2 }      \\
M              &     
\end{diagram}
\end{equation}
where $(P^J_L)_{\hat O(n)}(Sp(2n, \R)/\hat O(n))=P^J_L\times_{\hat O(n)} Sp(2n, \R)/\hat O(n)$ (again with $\hat O(n)$ acting via $Ad$), $\sigma$ is the natural functorialism on associated bundles induced by the canonical projection $Sp(2n, \R)\rightarrow Sp(2n, \R)/\hat O(n)$ and $\pi_{Mp}\circ \hat s_2:M\rightarrow (P^J_L)_{\hat O(n)}(Mp(2n, \R)/\hat O(n))$ defining the $\hat O(n)$ reduction of $P$ induced by $J_{T}$ and $L$. We can thus compare $\pi_{Mp}(\hat s_l), \pi_{Mp}(\hat s_2)$ using the $\hat O(n)$-identity section representative in $P/\hat O(n)=P\times Mp(2n, \R)/\hat O(n)$ to arrive at an ($\hat O(n)$-equivariant) map $\overline T=\varphi: P^J_L\rightarrow Sp(2n, \R)$. Alternatively, if ${\rm Gr}(T^\mathcal{J}_{\C}M^\pm) =\{{\rm Im}(\alpha^\pm_J(TM)), \ J \in \mathcal{J}\}$ is the set of the $\pm i$-eigenspaces parametrized by the set $\{J \in \mathcal{J}\}$ in $T_{\C}M$, we can view $\overline T$ as a map on the set ${\rm Gr}(T^\mathcal{J}_{\C}M^\pm)$ (covering the identity on $M$) which sends $T^J_{\C}M^\pm:={\rm Im}(\alpha^\pm_J(TM))$ to $T^{J_T}_{\C}M^\pm:={\rm Im}(\alpha^\pm_{J_T}(TM))$ resp. the associated projection $\alpha^\pm_J$ to $\alpha^\pm_{J_T}$, depending on perspective, so that we get also a mapping on $\mathcal{P}^{\mathcal{J}, \pm}$, the latter being the set of projections on $T^*M$ onto the set ${\rm Gr}(T^\mathcal{J}_{\C}M^\pm)$. In fact we can relate this action with the natural fibrewise linear action of any equivariant section $\overline T=\varphi: P^J_L\rightarrow Sp(2n, \R)$ on $T_{C}^M$. For this consider that we can parameterize the elements $L\in \mathfrak{h}$, the Siegel upper half-space (compare Section \ref{coherent}) of symplectic standard space $(V=\R^{2n}, \omega_0, J_0)$, resp. the set of totally complex positive Lagrangian subspaces of $V_\C$ resp the set of symmetric, positive, antilinear maps $T(L): V\rightarrow V$ satisfying
\[
L=\{\alpha^+_{J_0}(x)+ \alpha^-_{J_0}(T(L)x): x \in V\}\subset V_\C,
\]
where symmetry of $T(L)$ is measured wrt the real part of the Hermitian form $\langle x,y\rangle_0= \omega_0(\cdot, J_0 \cdot)+ i\omega_0(\cdot, J_0 \cdot)$, 'positivity' of $T(L)$ means here $\langle T(L)x,T(L)y\rangle_0 < \langle x,y\rangle_0 \ \forall x,y \in V$ and positivity of $L$ is measured wrt the Hermitian form $H(z,w)=\omega_0(z, \overline w), z,w  \in V_\C$, for more details cf. Sternberg \cite{sternberg}. We then have, by following loc. cit., that when considering the map $\mathcal{L}(T(L)): V\rightarrow V_C, \mathcal{L}(x)=\alpha^+_{J_0}(x)+ \alpha^-_{J_0}(T(L)x)$ that 
\[
(g\circ \mathcal{L}(T(L)))^{-1} \circ \mathcal{L}(T(gL))(x)= (BT(L)+A)^{-1}x,\ x  \in V,
\]
where $g=\left(\begin{smallmatrix} A&B\\B&A\end{smallmatrix}\right), A, B \in M(n, \R)$ is the decomposition of $g\in Sp(2n,\R)$ into $J_0$-linear and $J_0$-antilinear parts. Denoting $B(g, L)=BT(L)+A$, we note that by loc. cit., Chapter 5, $B(g, L)$ satisfies the coycle condition
\[
B(g_1g_2,L)=B(g_1,g_2L)B(g_2,L), \ g_1, g_2 \in Sp(2n,\R), L \in \mathfrak{h}.
\]
We will in the following understand $\overline T=\varphi: P^J_L\rightarrow Sp(2n, \R)$ as being the (essentially unique) fibrewise symplectic map on $(T_{\C}M, \omega)$ that is induced by the map $\overline T \in {\rm End}(\mathcal{J})$ interpreted as a map $g:P\rightarrow Mp(2n, \R)/\hat U(n))$ constructed above under the above isomorphism of symplectic actions on $\mathfrak{h}$ resp. on the set of (fibrewise) positive Lagangians of $T_{\C}M$, intertwining the implied action on $V_\C$ (using $\mathcal{L}$ up to the fibrewise cocycle $B$ relative to the fixed element $J \in \mathcal{J}$). Note that the presence of $B$ means that $\overline T$ is symplectic as a vector bundle isomorphism on $(T_{\C}M, \omega)$ covering the identity, that is an element of ${\rm End}^r_\omega(T_{\C}^*M\setminus \pi^{-1}(\mathcal{C}_{l,2}))$, where the multiindex $r$ has to be specified, but the assignment $(g:P\rightarrow Mp(2n, \R)/\hat U(n)) \mapsto \overline T$ in this sense is not a homomorphism wrt the implicit $Sp(2n, \R)$-actions on $\mathfrak{h}$ resp. $V_{\C}$, due to the 'cocycle property' of $B$. Summarizing the above discussion, we associate to two elements $J, J_T \in \mathcal{J}$ (resp. a symplectic connection $\nabla$ so that $\nabla J=0$, an $\hat O(n)$-reduction o $P$ associated to $(\nabla, J)$ as above) and a chosen lift $\overline T=\varphi: P^J_L\rightarrow Sp(2n, \R)$ intertwining $J$ and $J_T$ in the above sense a smooth function $\mathcal{B}(J, J_T) \in {\rm End}_\omega(T_{\C}^*M)$ which is defined by
\begin{equation}\label{cocycle2}
\mathcal{B}(J, J_T)_x= B(g(x), L(x)), \ x \in M,
\end{equation}
wrt to local coordinates $L(x)$ being associated to $J(x)$ (in a local frame), $g(x)$ induced by $\overline T(x)$ as in the above, it follows immediately from the above that $\mathcal{B}(J, J_T)$ is independent of the local choice of $\hat O(n)$-frame. Note that for a given section $s\in \mathcal{E}(T^*M)$ and an element $J \in \mathcal{J}$ we can consider the onedimensional over $\C$ distribution $E_{J, s}=span_{\C}(s, J^*s)\subset T^*_{\C}M$ and consider the restriction $\mathcal{B}(J, J_T)|E_{J, s}$ (it follows from the above that $\mathcal{B}(J, J_T):E_{J, s}\rightarrow E_{J_T, s}$. Since $(s, J)$ resp. $(s, J_T)$ defines a canonical basis of $E_{J, s}$ resp. $E_{J_T, s}$ we can consider $\mathcal{B}(J, J_T)|E_{J, s}$ pointwise over $M$ as a conformal mapping (in fact, a Moebus transformation) on $\C$, we will write the corresponding complex-valued function as $\mathcal{B}_{\C}(J, J_T,s)$.
\\
We note (compare \cite{sternberg}) that any fixed smooth section $J: M\rightarrow P\times_{\hat U(n)} \mathcal{J}_0$ (here, $\mathcal{J}_0$ is the set of complex structures on symplectic standard space) in this sense gives rise to a cocyle 
\[
\chi_J: P\times_{\hat U(n), Ad} \mathcal{J} \rightarrow \R/{2\pi\Z}, \ (p, gT_0) \mapsto {\rm log}\left(\frac{det_{\C}{BT_0+A}}{|det_{\C}{BT_0+A}|}\right),
\]
where $P\times_{\hat U(n), Ad} \mathcal{J}_0=(P^J_L)_{\hat U(n), Ad}(Sp(2n, \R)/\hat U(n))=P^J_L\times_{\hat U(n), Ad} Sp(2n, \R)/\hat U(n)$ and we understand $J(x)=(p(x), T_0(x)), x \in M, p(x) \in (P_{\hat U(n)})_x$ and $g=\left(\begin{smallmatrix} A&B\\B&A\end{smallmatrix}\right), A, B \in M(n, \R)$ is the decomposition of $g\in Sp(2n,\R)$ into $J_0$-linear and $J_0$-antilinear parts, parameterizing the respective fibre $\mathcal{J}_x,\ x\in M$ as above and $log(re^{it})=ln(r)+it, r>0,\ t\in [0, 2\pi)$. The map $\chi_J$ is a coycle in the sense that if $g_1, g_2 \in {\rm End}_{\omega}(T^*M)$ are two fibrewise symplectic endomorphisms of $T^*M$ acting on $\Gamma(\mathcal{J})$ in the obvious way, then
\[
\chi_J(g_1g_2)=\chi_{g_2J}(g_1)+ \chi_{J}(g_2),
\]
for a proof see \cite{sternberg}. The cocycle $\chi_J$ will be needed particularly later on in the treatment of the non-$C^1$-small case using the symplectic spinor viewpoint. Note that $\chi_J$ descends to a $\R/\Z$-valued function on $M$ (exactly by pulling back by a secondary element $J_T \in \mathcal{J}$), thus defining (by \cite{cheegersimons}) an element $\tilde \chi_{J, J_T}$ of $H^1(M, \Z)$.
\begin{Def}
We will denote the element $\tilde \chi_{J,J_T}=J_T^*(\chi_J)$ of $H^1(M, \Z)$ associated to a given $\hat U(n)$ reduction of $P$ and given element $J, J_T\in \mathcal{J}$ the {\it relative symplectic genus} of $(M, \omega, J, J_T)$
\end{Def}
With the above conventions, of course we can consider the corresponding map on global sections $\overline T:\Gamma({\rm Gr}(T^\mathcal{J}_{\C}M^\pm))\rightarrow \Gamma({\rm Gr}(T^\mathcal{J}_{\C}M^\pm))$, we will denote $\Gamma({\rm Gr}(T^\mathcal{J}_{\C}M^\pm))$ as $\mathcal{E}({\rm Gr}(T^\mathcal{J}_{\C}M^\pm))$ in the following. Summarizing the above we have two maps associated to our pair of maps $\hat s_l, \hat s_2: M\rightarrow \hat P_{G/\tilde G}$ and fixing an $Ad$-equivariant map $\tilde g: P_G \rightarrow G$ satisfying $\hat s_l.\tilde g=\hat s_2$, namely for an appropriate $r\in \N^+$ 
\[
T_{l,2}:\mathcal{M}^{r, T_{l,2}}(T^*M, \mathcal{C}_{l})\rightarrow \mathcal{E}(T^*M, M\setminus \mathcal{C}_{2}), \quad \overline T: \mathcal{E}({\rm Gr}(T^\mathcal{J}_{\C}M^\pm)) \rightarrow \mathcal{E}({\rm Gr}(T^\mathcal{J}_{\C}M^\pm)).
\]
To see the symmetry in the above more clearly, note first that, after choosing local symplectic frames, the action of $Sp(2n, \R)$ on $\mathfrak{h}\simeq \mathcal{J}_x$ introduced in Section \ref{coherent} corresponds to the natural matrix action of ${\rm End}_\omega(T_{\C}M)$, thus equivariant maps $S:P\rightarrow Sp(2n, \R)$ on the Grassmannian $(T^\mathcal{J}_{\C}M^\pm)$, whose stabilisator is isomorphic to the set of identity maps in $S:P\rightarrow Sp(2n, \R)/\hat U(n)$. More precisely for a map $S:P\rightarrow Sp(2n, \R)/\hat U(n)$ we have $S.\alpha^\pm_J(TM)=\alpha^\pm_{S.J}(TM), \ J \in \mathcal{J}$ (as explained above we can choose a smooth representative $\overline T:P\rightarrow Sp(2n, \R)$). Here, we understand ${\rm Gr}(T^\mathcal{J}_{\C}M^\pm)$ as the fibre bundle ${\rm Gr}(T^\mathcal{J}_{\C}M^\pm)=P \times_{Mp(2n,\R)} {\rm Gr}(T^{\mathcal{J}_0}_{\C}\R^{2n})^\pm$, where $\mathcal{J}_0$ is the set of compatible complex structures on $(\R^{2n}, \omega_0)$. The above global action is then induced by the action of $Sp(2n, \R)$ on ${\rm Gr}((T^{\mathcal{J}_0}_{\C}\R^{2n})^\pm)$. Interpreting thus $\overline T: \mathcal{E}({\rm Gr}(T^\mathcal{J}_{\C}M^\pm)) \rightarrow \mathcal{E}({\rm Gr}(T^\mathcal{J}_{\C}M^\pm))$ as a map (denoted by the same symbol) $\overline T: \mathcal{E}(T_{\C}M) \rightarrow \mathcal{E}(T_{\C}M)$ and extending $T_{l,2}$ complex-linearly to a map $T_{l,2}:\mathcal{M}(T^*_{\C}M, \mathcal{C}_{l})\rightarrow \mathcal{E}(T^*_{\C}M, M\setminus \mathcal{C}_{2})$ and finally dualizing $\overline T$ using $\omega$ and extending the dualized map $\overline T^*: \mathcal{E}(T^*_{\C}M) \rightarrow \mathcal{E}(T^*_{\C}M)$ linearly to $\mathcal{E}(T^*_{\C}M, M \setminus \mathcal{C}_{2})$ we arrive at a pair
\begin{equation}\label{matrix1}
T_{l,2}:\mathcal{M}^{r,T_{l,2}}(T^*_{\C}M, \mathcal{C}_{l})\rightarrow \mathcal{E}(T_{\C}^*M, M\setminus \mathcal{C}_{2}), \quad \overline T^*: \mathcal{E}(T^*_{\C}M, M \setminus \mathcal{C}_{2}) \rightarrow \mathcal{E}(T^*_{\C}M, M \setminus \mathcal{C}_{l}).
\end{equation}
Note that on the level of elements of $T^*M$, $T^*_{\C}M$ resp. symplectic vector bundle bundle automorphisms covering the identity in $T^*M$ resp. $T^*_{\C}M$ (outside of $\mathcal{C}_{l,2}$) the above two mappings (before $\C$-linearly continuing $T_{l,2}$ to $T^*_{\C}M$, we will use the two views, $T_{l,2}$ as a real or complex-linear mapping interchangingly in the following) can be understood using the embeddings ($\R$-linear injections) $i\circ \alpha^\pm_{J}: TM \rightarrow T^{J}_{\C}M^\pm={\rm Im}(\alpha^\pm_{J}(TM))\hookrightarrow T_{\C}M$ resp. analogously $\alpha^\pm_{J_T}:TM\rightarrow T^{J_T}_{\C}M^\pm$ as being situated in ($\omega$-dual version of) the following 'commutative' diagram (in fact evidently only the 'big square' and the middle square commutes, not the 'small squares' on the right resp. the left)
\begin{equation}\label{dia2}
\begin{CD}
T(M\setminus \mathcal{C}_{l,2})  @>{\alpha^\pm_{J}}>> T_{\C}(M\setminus \mathcal{C}_{l,2})  @>{\overline T}>>  T_{\C}(M\setminus \mathcal{C}_{l,2})  @<{\alpha^\pm_{J_T}}<< T(M\setminus \mathcal{C}_{l,2})\\
@VV{T_{l,2}}V      @AA{i}A          @AA{i}A     @VV{T_{l,2}}V\\
T(M\setminus \mathcal{C}_{l,2})  @>{\alpha^\pm_{J}}>>  T^{J}_{\C}(M\setminus \mathcal{C}_{l,2})^\pm  @>{\overline T}>> T^{J_T}_{\C}(M\setminus \mathcal{C}_{l,2})^\pm @>{(\alpha^\pm_{J_T})^{-1}}>>T(M\setminus \mathcal{C}_{l,2})
\end{CD}
\end{equation}
we will prove the commutativity (in the above sense) of the diagram on appropriate spaces of (singular) sections of $T^*M$ resp. $T^*_{\C}M$ and respective automorphism spaces in Theorem \ref{theorem3}.\\
Assume now that the maps $\hat s_l, \hat s_2: M\rightarrow \hat P_{G/\tilde G}$ define a pair of {\it dual} (generalized standard irreducible, in general singular) Frobenius structures in the sense of Theorem \ref{theorem2} with $\mathcal{C}_{l}, \mathcal{C}_{2}$ the sets of (transversal) intersection of the associated closed sections $s_l, s_2: M\rightarrow T^*M$ with the zero-section of $T^*M$ and $T_{l,2}$ and $\overline T^*$ are associated to $\hat s_l, \hat s_2$ in the sense described above. We now want to argue how, by slightly enlarging $\mathcal{M}(T^*_{\C}M, \mathcal{C}_{l})$ while restricting $\mathcal{E}(T^*_{\C}M, M \setminus \mathcal{C}_{2})$ resp. $\mathcal{E}(T^*_{\C}M)$ we can modify the mappings in (\ref{matrix1}) into a pair of morphisms in the sense of (\ref{matrixfact}).\\
Let $\mathcal{E}^0(T^*M, M\setminus \mathcal{C}_{2})$ be the submodule of $\mathcal{E}(T^*M, M\setminus \mathcal{C}_{2})$ whose elements $s$ have poles on $\mathcal{C}_2$ that are actually annihilated near $\mathcal{C}_2$ (in the same sense as in (\ref{maximalideal3})) by an appropriate finite power of the ideal $C^\infty(M ,\mathcal{C}_{2})$ in $C^\infty(M, C)$, that is for any $s \in \mathcal{E}^0(T^*M, M\setminus \mathcal{C}_{2})$ there exists a multi-index $r\geq 0$ and a $g \in \mathfrak{p}_2^r$ so that $gs \in \mathcal{E}(T^*M, M)$. Define as $\mathcal{M}_\infty(T^*M, \mathcal{C}_{2})$ the submodule of $\mathcal{E}^0(T^*M, M\setminus \mathcal{C}_{2})$ generated by the ideal $C^\infty(M,\mathcal{C}_{2}, \PI^1)$ of smooth $\PI^1$-valued functions $C^\infty(M, \PI^1)$, so elements of the quotient field of $C^\infty(M, \C)$ on $M$, that either vanish or take the value $\infty\in \PI^1$ on $\mathcal{C}_{2}$. Here we identify $\PI^1\simeq S^2$, $S^2$ being the one point compactification of $\C$ and smoothness of a function $f$ near a pole $x \in \mathcal{C}_2$ is of course defined as smoothness of $1/f$ near $x$. In the following, we will restrict the maximal order of the poles on any $x_i \in \mathcal{C}_2$ in $\mathcal{M}_\infty(T^*M, \mathcal{C}_{2})$ resp. $C^\infty(M,\mathcal{C}_{2}, \PI^1)$ in the sense of (\ref{maximalideal}), denoting the resulting $C^\infty(M, \C)$-module resp. ring by $\mathcal{M}^0_\infty(T^*M, \mathcal{C}_{2})$ resp. $C^\infty_0(M,\mathcal{C}_{2}, \PI^1)$. The latter is endowed with a certain $Z_2$-graded ring structure if we assume it can be written as a principal fractional ideal in $K$, the quotient field of $C^\infty(M ,\C)$ with denominator having zeroes at most in $\mathcal{C}_{2}$, as follows. Let $v \in C^\infty(M, \C)$ (with zeroes in $\mathcal{C}_2$) and denote by $1/\mathfrak{v}$ the principal fractional ideal in $K$ generated by $1/v$. Assume $C^\infty_0(M,\mathcal{C}_{2}, \PI^1)=\frac{u}{\mathfrak{v}}, u \in C^\infty(M ,\C)$. Then the latter inherits a ring stucture by setting for any two elements $\frac{u_1}{v}, \frac{u_2}{v}, u_1, u_2 \in C^\infty(M ,\C)$ so that $u_1, u_2$ are not smoothly divisible by $v$ (that is there are no smooth functions $w_1, w_2\in C^\infty(M ,\C)$ so that $w_i=u_i/v, \ i=1,2$:
\begin{equation}\label{modmult}
\frac{u_1}{v}\cdot \frac{u_2}{v}= \frac{u_1\cdot u_2}{v} \in  C^\infty_0(M,\mathcal{C}_{2}, \PI^1).
\end{equation}
That is, the multiplicative structure on $C^\infty_0(M,\mathcal{C}_{2}, \PI^1)$ is inherited from $R$ by 'keeping the denominator stable'. Note that while we assume that at any $x_i \in \mathcal{C}_2$ the pole of any element of $\mathcal{M}^0_\infty(T^*M, \mathcal{C}_{2})$ is annihilated in the sense of (\ref{maximalideal}) by an element of a fixed finite power of $\mathfrak{m}_{x_i}$ at $x_i$, we do not denote this number explicitly. In addition, we will assume that $\mathcal{M}^0_\infty(T^*M, \mathcal{C}_{2})$ has actually also the structure of a 'principal fractional ideal', to be more precise, we denote for a multiindex $r=(r_1, \dots, r_{\tilde k}), \tilde k=|\mathcal{C}_{2}|, r_i \in \N^+$ by $\mathcal{M}^r_\infty(T^*M, \mathcal{C}_{2})$ the $\mathfrak{p}_2$-submodule of $\mathcal{M}^0_\infty(T^*M, \mathcal{C}_{2})$ so that there exists an element $1/\mathfrak{r}$ in the quotient field $K$ of $R=C^\infty(M, \C)$ so that $\mathcal{M}^r_\infty(T^*M, \mathcal{C}_{2})=\mathcal{M}^r(T^*M, \mathcal{C}_{2})/\mathfrak{r}$ and for any $x_i\in \mathcal{C}_{2}$ there exists an element $g_i \in \mathfrak{m}_{x_i}$ so that $g_i=0$ in $\mathfrak{m}_{x_i}/\mathfrak{m}_{x_i}^2$ and an element $\tilde r=(\tilde r_1, \dots ,\tilde r_{\tilde k}) \in \N^{\tilde k}$ with $\tilde r_i \leq r_i$ so that 
\begin{equation}\label{maximalideal2}
\Pi_{i=1}^k g_i^{\tilde r_i}\mathcal{M}^r_\infty(T^*M, \mathcal{C}_{2})\subset\mathcal{M}^r(T^*M, \mathcal{C}_{2}),
\end{equation}
that is the denominators in $\mathcal{M}^r_\infty(T^*M, \mathcal{C}_{2})$ have lower or equal maximal order $\tilde r_i$ as the minimal order of the vanishing ideals generating $\mathcal{M}^r(T^*M, \mathcal{C}_{2})$. If $\mathfrak{r}$ is such that each $g^{\tilde r_i}$ is neccessarily an element of $\mathcal{M}^r(T^*M, \mathcal{C}_{2})$ while (\ref{maximalideal2}) holds, we will call $r$ the order of the fractional ideal $\mathcal{M}^r_\infty(T^*M, \mathcal{C}_{2})$ and $\mathcal{M}^r(T^*M, \mathcal{C}_{2})$ the associated ring.
\\
We can consider $\mathcal{M}^0_\infty(T^*M, \mathcal{C}_{2})$ in the above sense as a $C^\infty(M, \C)$-submodule of $\mathcal{E}(T^*M, M\setminus \mathcal{C}_{2})$ by restriction and we will explain below how to define a $C^\infty_0(M ,\mathcal{C}_{2}, \PI^1)$-module structure on $\mathcal{M}^0_\infty(T^*M, \mathcal{C}_{2})$ with implied ring structure on $C^\infty_0(M ,\mathcal{C}_{2}, \PI^1)$ as indicated above. We can consider its 'complexification' $\mathcal{M}^0_\infty(T_{\C}^*M, \mathcal{C}_{2})$. We can define $\mathcal{M}^0_\infty(T^*_{\C}M, \mathcal{C}_{l})$ resp. $\mathcal{M}^r(T^*M, \mathcal{C}_{l})$ and $\mathcal{M}^r_\infty(T^*_{\C}M, \mathcal{C}_{l})$ analogously as $C^\infty_0(M ,\mathcal{C}_{l}, \PI^1)$ resp. $\mathfrak{p}^r_l$-submodules of $\mathcal{E}(T^*M, M\setminus \mathcal{C}_{l})$ on $M$. We then claim and prove below in Lemma/Assumption \ref{ass3} that $T_{l,2}$, after eventually modifying $T_{l,2}|{\rm span}(s_l)^\perp$ appropriately, induces 'by projecting out the singularity' a 'natural' map $\tilde T_{l,2}:\mathcal{M}^{r,T_{l,2}}_\infty(T^*_{\C}M, \mathcal{C}_{l}) \subset \mathcal{M}^r_\infty(T^*_{\C}M, \mathcal{C}_{l})\rightarrow \mathcal{M}^{\tilde r}_\infty(T_{\C}^*M, \mathcal{C}_{2})$ for an appropriate $\tilde r \in (\N^+)^{\tilde k}$ where $\mathcal{M}^{r,T_{l,2}}_\infty(T^*_{\C}M, \mathcal{C}_{l})$ is defined by replacing $\mathcal{M}^r(T^*_{\C}M, \mathcal{C}_{l})$ in (\ref{maximalideal2}) by $\mathcal{M}^{r,T_{l,2}}(T^*_{\C}M, \mathcal{C}_{l})$. On the other hand, since by assumption $T_{l,2}, \overline T^*$ (in (\ref{matrix1})) satisfy the first line of (\ref{global}), we have $\overline T^* \circ T_{l,2}(s_l) \subset \mathcal{M}^r_\infty(T^*_{\C}M, \mathcal{C}_{l})\subset \mathcal{M}_\infty^0(T^*_{\C}M, \mathcal{C}_{l})$ since $dS$ and $s_l$ both vanish exactly at $\mathcal{C}_l$ and the order of vanishing on $\mathcal{C}_l$ is determined by by $\hat s_l: M\rightarrow \hat P_{G/\tilde G}$. Finally note that we can restrict $\overline T^*$ in (\ref{matrix1}) to $\mathcal{M}^{\tilde r}_\infty(T_{\C}^*M, \mathcal{C}_{2})$.\\
We denote from now on by $W\cdot I^{\rm Gr}_n$ for some function $W \in C^\infty(M ,\mathcal{C}_{l,2}, \PI^1)$ the map $W\cdot I_{\mathcal{J}} \in {\rm End}(\mathcal{J})$ acting on $\mathcal{E}({\rm Gr}(T^\mathcal{J}_{\C}M^\pm))$ by pointwise identification ${\rm Gr}(T^\mathcal{J}_{\C}M^\pm)_x \simeq \mathfrak{h},\ x \in M$ where $W$ acts on an element of $T \in \mathfrak{h}$ (understood as a complex symmetric $n \times n$-matrix) by scalar multiplication and interpreted as a fibrewise symplectic mapping $W \cdot I^{\rm Gr}_n: \mathcal{E}(T^*M, M\setminus \mathcal{C}_{l,2}) \rightarrow \mathcal{E}(T^*M, M\setminus \mathcal{C}_{l,2})$ covering he identity on $M$ using the correspondence discussed above (\ref{matrix1}), we conclude that wrt some chosen local $O(n)$-frame adopted to the given Lagrangian distribution $\Lambda\subset T^*M$ and for a local section $\hat s: U\subset M\rightarrow P_{\tilde G}$ and $W \in C^\infty(M ,\mathcal{C}_{l,2}, \PI^1)$ we have the following multiplicative map
\begin{equation}\label{ast}
W\ast := W \cdot I^{\rm Gr}_n: \mathcal{E}(T^*M, M\setminus \mathcal{C}_{l,2}) \rightarrow \mathcal{E}(T^*M, M\setminus \mathcal{C}_{l,2}),\quad s \mapsto \left(\begin{smallmatrix} \sqrt{W}I_{n}& 0\\0&\sqrt{W}^{-1}I_{n}\end{smallmatrix}\right )s,
\end{equation}
where $I_{n}=Id_{\R^n}$, note that $\ast$ thus depends on the choice of $P_{\tilde G}\subset P$ and in especially on $J$, we will notationally suppress this dependency in the following when the context allows it. Thus, if we compose $\kappa_J:TM\simeq T^\mathcal{J}_{\C}M^+$ with $\omega$-duality the pointwise multiplication of $T \in \mathfrak{h}$ with $W \in C^\infty(M ,\mathcal{C}_{l,2}, \PI^1)$ corresponds as a (fibrewise, real) symplectic mapping to the multiplication $W\ast$ and preserves evidently $\mathcal{E}^0(T^*M, M\setminus \mathcal{C}_{l,2})$. This of course does not give a module structure on $\mathcal{E}(T^*M, M\setminus \mathcal{C}_{l,2})$. However, consider $C^\infty(M ,\mathcal{C}_{l,2}, \PI^1)$ as as subset of the quotient field $K$ of $R=C^\infty(M)$. Then any element $W\in C^\infty(M ,\mathcal{C}_{l,2}, \PI^1)$ generates a principal thus invertible fractional ideal $\mathfrak{w}=WR$ in $K$. Let $\mathfrak{w}^{-1}$ be its inverse, it is generated by $W^{-1}$ in $K$ (thus $\mathfrak{w}^{-1}=W^{-1}R$). We then define a $C^\infty(M)$-module structure on $\mathcal{E}(T^*M, M\setminus \mathcal{C}_{l,2})$ in the following sense:
\begin{equation}\label{ast2}
\ast_W:  C^\infty(M)\times \mathcal{E}(T^*M, M\setminus \mathcal{C}_{l,2}) \rightarrow \mathcal{E}(T^*M, M\setminus \mathcal{C}_{l,2}),\quad (g,s) \mapsto g.(W\ast s),
\end{equation}
where here, $g.s$ means simply the usual $C^\infty(M)$-operation of on $\mathcal{E}(T^*M)$. Assume now we have two sub-modules $\mathcal{E}^W_{1,2}(\mathcal{C}_{l,2})\subset \mathcal{E}(T^*M, M\setminus \mathcal{C}_{l,2})$ so that
\[
\ast_{w}(u,\mathcal{E}^W_{1}(\mathcal{C}_{l,2})) = \mathcal{E}^W_{2}(\mathcal{C}_{l,2}),\ w \in \mathfrak{w}^{-1}, u \in C^\infty(M), \quad \ast_{w}(u,\mathcal{E}^W_{2}(\mathcal{C}_{l,2})) = \mathcal{E}^W_{1}(\mathcal{C}_{l,2}),\ w \in \mathfrak{w}, u \in C^\infty(M).
\]
Let now $\mathcal{E}^W_{1\oplus 2}(\mathcal{C}_{l,2})= \mathcal{E}^W_{1}(\mathcal{C}_{l,2}) \oplus \mathcal{E}^W_{2}(\mathcal{C}_{l,2})$, this is a $C^\infty(M)$-module by the usual component-wise multiplication (note that we do not assume that $ \mathcal{E}^W_{1,2}(\mathcal{C}_{l,2})$ are disjoint as subspaces of $\mathcal{E}(T^*M, M\setminus \mathcal{C}_{l,2})$). Then we define for $W \in \mathfrak{w}^{-1}$ a $C^\infty(M)$-module structure on $\mathcal{E}^W_{1\oplus 2}(\mathcal{C}_{l,2}))$ by
\begin{equation}\label{ast3}
\tilde \ast_W: C^\infty(M)\times \mathcal{E}^W_{1\oplus 2}(\mathcal{C}_{l,2})\rightarrow \mathcal{E}^W_{1\oplus 2}(\mathcal{C}_{l,2}), \quad \  \tilde \ast_W (u, \begin{pmatrix}s_1\\ s_2\end{pmatrix})= \begin{pmatrix}u.{\bf 1}& 0\\ \ast_{W}(u, \cdot)& u.{\bf 1}\\ \end{pmatrix}  \begin{pmatrix}s_1\\ s_2\end{pmatrix}.
\end{equation}
In the following we will shortly write $(\frac{u}{w})\ast(\cdot):=\tilde \ast_w(u, \cdot)$ for the implied $\mathfrak{w}^{-1}$-module structure on $\mathcal{E}^W_{1\oplus 2}(\mathcal{C}_{l,2})$ if $w \in \mathfrak{w}^{-1}$ as above and $u \in C^\infty(M, \C)$ where as remarked $\mathfrak{w}^{-1}$ carries the ring structure induced from $C^\infty(M, \C)$ as constructed in (\ref{modmult}).\\
In the following, let for $W \in \mathfrak{w}$ be $\mathcal{E}^W_{2}(\mathcal{C}_{l,2})=\mathcal{M}^r_\infty(T^*M, \mathcal{C}_{2})=\mathcal{M}^r(T^*M, \mathcal{C}_{2})/\mathfrak{w}$ and $\mathcal{E}^W_{1}(\mathcal{C}_{l,2})=\mathcal{M}^r(T^*M, \mathcal{C}_{2})$, then obviously $\ast_{W^{-1}}(\cdot, \mathcal{E}^W_{1}(\mathcal{C}_{l,2})) = \mathcal{E}^W_{2}(\mathcal{C}_{l,2})$ and $\ast_{W}(\cdot, \mathcal{E}^W_{2}(\mathcal{C}_{l,2})) = \mathcal{E}^W_{1}(\mathcal{C}_{l,2})$ ($\mathcal{C}_{l,2}$ stands here for any of the subsets $\mathcal{C}_{l}, \mathcal{C}_{2}$ of $M$). We then define 
\begin{equation}\label{maximalnew}
\tilde {\mathcal{M}}^{r}_\infty(T^*_{\C}M, \mathcal{C}_{l,2}):=\mathcal{E}^W_{1\oplus 2}(\mathcal{C}_{l,2})
\end{equation}
according to the conventions above, analogously $\tilde {\mathcal{M}}^{r, T}_\infty(T^*_{\C}M, \mathcal{C}_{l,2})$ and we will thus implicitly always use for any given $W \in C^\infty_0(M ,\mathcal{C}_{l,2}, \PI^1)$ the multiplications $\tilde \ast_W$ (resp. $\tilde \ast_{W,J}$ to emphasize its dependency on $J$) in the diverse module structures $\tilde {\mathcal{M}}^{r, T}_\infty(T^*_{\C}M, \mathcal{C}_{l}), \tilde {\mathcal{M}}^{r}_\infty(T^*_{\C}M, \mathcal{C}_{l}), \tilde {\mathcal{M}}^{r, T}_\infty(T^*_{\C}M, \mathcal{C}_{2}), \tilde {\mathcal{M}}^{r}_\infty(T^*_{\C}M, \mathcal{C}_{2})$ as subsets of $\mathcal{E}^2( \mathcal{C}_{l,2}):=\mathcal{E}^0(T^*M, M\setminus \mathcal{C}_{l,2})\oplus \mathcal{E}^0(T^*M, M\setminus \mathcal{C}_{l,2})$ over $\tilde R:= C^\infty_0(M ,\mathcal{C}_{l,2}, \PI^1)$, where we endow the latter with the ring structure it inherits from its structure as a principal fractional ideal in $K$ as in (\ref{modmult}). To be more precise we define
\begin{Def}\label{blaJext}
$\tilde {\mathcal{M}}_\infty^{r,T}(T^*_{\C}M, \mathcal{C}_{l,2})$, $r\in (\N^+)^k$ is the $\tilde R$-submodule of $\mathcal{E}^2( \mathcal{C}_{l,2})$ defined by (\ref{maximalideal}), (\ref{maximalideal2}) resp. (\ref{laurent}), $1/\mathfrak{w} \subset K$ and $T \in {\rm End}^r_\omega(T^*M\setminus \pi^{-1}(\mathcal{C}_{l}))$ as above, while replacing the $R$-multiplication in (\ref{maximalideal2}), (\ref{maximalideal2}) resp. (\ref{laurent}) by the $\tilde R$-multiplication $\tilde \ast_{W,J}$ as defined in (\ref{ast3}). {\it In addition}, since $\tilde {\mathcal{M}}^{r,T}_\infty(T^*_{\C}M, \mathcal{C}_{l})$ defined in this way is in general not $J$-invariant for a given $\nabla$-parallel almost complex structure associated to the $\tilde G$-reduction $P_{\tilde G}\subset P$, we require $\tilde {\mathcal{M}}^{r,T, J}_\infty(T^*_{\C}M, \mathcal{C}_{l,2})\subset \mathcal{E}^2(\mathcal{C}_{l,2})$, $r\in (\N^+)^k$ to be the smallest submodule of $\mathcal{E}^2(\mathcal{C}_{l,2})$ that contains $\mathcal{M}^{r,T}_\infty(T^*_{\C}M, \mathcal{C}_{l,2})$, $r\in (\N^+)^k$ and is left invariant by $J$. 
\end{Def}
Note that since $J\circ W\ast_J=W^{-1}\ast_J$, we have necessarily that $\mathcal{M}^{r,T, J}_\infty(T^*_{\C}M, \mathcal{C}_{l})$ is of the form $\mathcal{M}^{r,T, J}_\infty(T^*_{\C}M, \mathcal{C}_{l})=\mathcal{M}^{r,T, J}(T^*_{\C}M, \mathcal{C}_{l})/\mathfrak{r}$ for $\mathfrak{r} \in \mathcal{M}^{r,T, J}_\infty(T^*_{\C}M, \mathcal{C}_{l})\subset \mathcal{E}^0(T^*M, M\setminus \mathcal{C}_{l})\cap \mathcal{E}(T^*M, M)$ an appropriate ideal reflecting the set of 'minimal vanishing orders'. We will suppress notationally in the following in general the dependence of $\mathcal{M}^{r,T, J}_\infty(T^*_{\C}M, \mathcal{C}_{l})$ resp. $\mathcal{M}^{r,T, J}(T^*_{\C}M, \mathcal{C}_{l})$ on $J$ as long as it is clear which almost complex structure i.e. $\tilde O(n)$-reduction $P_{\tilde G}$ of $P$ is involved in its definition (note that $\ast$ in fact depends not only on $J$, but on $\Lambda_{\tilde G}$ and the chosen $P_{\tilde G}$). To proceed we need the following observation/assumption, it reflects the fact that associated to a standard, irreducible, in general singular Frobenius structure there are associated 'natural denominators' that can be 'switched on and off' in a sense ('projecting/not projecting out the singularity' of $T_{l,2}$ and $\overline T^*$).
\begin{asslemma} \label{ass3}
Consider the classification of irreducible standard (singular) Frobenius structures on Proposition \ref{classi} resp. Theorem \ref{genclass} and assume that a given pair of (nonsingular) maps $\hat s^0_l, \hat s^0_2: M\rightarrow \hat P_{G/\tilde G}$, not necessarily transversal to the zero section of $T^*M$, defines a pair of {\it dual} (generalized standard irreducible, in general singular) Frobenius structures with underlying almost complex structures $J_0, J_{T,0}$ respectively in the sense of Theorem \ref{theorem2}, i.e. is homotoped in the sense of Proposition \ref{classi} to define a pair $\hat s_l, \hat s_2: M\rightarrow \hat P_{G/\tilde G}$ of singular irreducible standard Frobenius structures. Let now be $\tau \in \{0,1\}$. Then the above discussion gives well-defined maps 
\begin{equation}
\tilde T_{l,2}:\tilde{\mathcal{M}}^{r,T_{l,2}}_\infty(T^*_{\C}M, \mathcal{C}_{l})\rightarrow \left\{\begin{matrix}\mathcal{M}^{\tilde r}(T_{\C}^*M, \mathcal{C}_{2})\oplus \{0\},\  \tau = 0\\
\{0\} \oplus \mathcal{M}^{\tilde r}_\infty(T_{\C}^*M, \mathcal{C}_{2}),\ \tau=1\end{matrix}\right.  \quad \subset \tilde {\mathcal{M}}^{\tilde r}_\infty(T_{\C}^*M, \mathcal{C}_{2}),
\end{equation}
for appropriate $r \in (\N^+)^k, \tilde r \in (\N^+)^{\tilde k}$, where the denominators determining $\mathcal{M}^{r,T_{l,2}}_\infty(T^*_{\C}M, \mathcal{C}_{l})$ and $\mathcal{M}^{\tilde r}_{(\infty)}(T_{\C}^*M, \mathcal{C}_{2})$ are given by $|s_l|^2$ and $|s_2|^{2\tau}$ respectively, where the norms are induced by $(\omega,J_0)$ and $(\omega, J_{T,0})$, respectively and analogously
\begin{equation}
\overline T^*:\tilde{\mathcal{M}}^{\tilde r, T^*}_\infty(T_{\C}^*M, \mathcal{C}_{2}) \rightarrow \left\{\begin{matrix} \mathcal{M}^r(T^*_{\C}M, \mathcal{C}_{l}) \oplus \{0\}, \ \tau=0\\ \{0\} \oplus \mathcal{M}^r_\infty(T^*_{\C}M, \mathcal{C}_{l}), \ \tau=1 \end{matrix}\right. \quad \subset \tilde {\mathcal{M}}^r_\infty(T^*_{\C}M, \mathcal{C}_{l}),
\end{equation}
where the denominators determining $\mathcal{M}^{r,T_{l,2}}_{(\infty)}(T^*_{\C}M, \mathcal{C}_{l})$ and $\mathcal{M}^{\tilde r}_\infty(T_{\C}^*M, \mathcal{C}_{2})$ are given by $|s_l|^{2\tau}$ and $|s_2|^2$ respectively i.e. $\tilde T_{l,2} \in {\rm End}^r_\omega(T^*M\setminus \pi^{-1}(\mathcal{C}_{l}))$ and $\overline T^*\in {\rm End}^{\tilde r}_\omega(T^*M\setminus \pi^{-1}(\mathcal{C}_{2}))$, in fact in the transversal case we infer $r= {\bf 1}, \tilde r= {\bf 1}$. In the following, we will consider only (pairs of) 'singular' irreducible (generalized) standard Frobenius structures $\hat s_l, \hat s_2: M\rightarrow \hat P_{G/\tilde G}$ that are  endpoints of an (on $M\setminus \mathcal{C}_{2}$ resp. $M\setminus \mathcal{C}_{l}$) smooth homotopy of smooth maps in the sense of (the proofs of) Proposition \ref{classi} and Theorem \ref{genclass} intersecting the zero section of $T^*M$ transversally. We finally note that the set $C^\infty_0(M ,\mathcal{C}_{2}, \PI^1)$ (analogously $C^\infty_0(M ,\mathcal{C}_{l}, \PI^1)$) can be canonically identified with the fractional ideal $\mathfrak{p}(\mathcal{C}_{2})/\mathfrak{r}$ in the quotient field $K$ of $R=C^\infty(M, \C)$ where the ideal $\mathfrak{p}(\mathcal{C}_{2})$ of $C^\infty(M ,\C)$ is generated by $\prod_{x \in \mathcal{C}_2}{\mathfrak{m}}_x$, ${\mathfrak{m}}_x$ maximal ideal at $x\in M$, and $1/\mathfrak{r} \in K$ resp. $\mathfrak{p}(\mathcal{C}_{2})/\mathfrak{r}$ is an element of the (generalized) ideal quotient in $K$ of the ideals ${\bf 1}$ resp. $\mathfrak{p}(\mathcal{C}_{2})$ in the ring $R$.
\end{asslemma}
\begin{proof}
We have to show that we can choose $\tilde T_{l,2}, \overline T^*$ as described above so that their images lie in the $C_0^\infty(M,\C)$- submodules of $\mathcal{E}^0(T^*M, M\setminus \mathcal{C}_{l})$ resp. $\mathcal{E}^0(T^*M, M\setminus \mathcal{C}_{2})$ generated by the sum of the (fractional) ideals $\mathfrak{p}(\mathcal{C}_{l})$, $\mathfrak{p}(\mathcal{C}_{l})^{-1}$ and $\mathfrak{p}(\mathcal{C}_{2})$, $\mathfrak{p}(\mathcal{C}_{2})^{-1}$, respectively. But that will follow in the case of $\overline T^*$ from the definition of the homotoped almost complex structures $J$ defining a singular Frobenius structure that was defined in Proposition \ref{classi}, where the homotopy blows up at $\mathcal{C}_l$ resp. $\mathcal{C}_2$ in the case of the given maps $\hat s_l, \hat s_2: M\rightarrow \hat P_{G/\tilde G}$. Here if $\xi_1={\rm pr}_1\circ \hat s_l$, we defined the homotopy as $t \mapsto J_t, t \in [0,1]$ where $J_t=\frac{1}{(1-t)+t|\xi|^2}J$ on ${\rm ker}(\xi_1)^\perp$ and $J_t=((1-t)+t|\xi|^2)J$ on $J\circ{\rm ker}(\xi_1)^\perp$. Now since $|\xi|^2:M \rightarrow \R$ is differentiable on $M$, it is thus an element of $C_0^\infty(M ,\mathcal{C}_{l}, \PI^1)$, analogously for $\frac{1}{|\xi|^2}:M \rightarrow \PI$, analogously of course we can argue for $\hat s_2$. Then the pole order of $T_{l,2}|{\rm span}_{\C}(s_l)$ constructed in the sense of (\ref{matrix1}) at $x_i$ is determined by the pole order of $\frac{1}{|\xi|^{\tau}}:M \rightarrow \PI$ in the sense of the formal Laurent expansion associated to (\ref{laurent}) which in turn coincides with the order of vanishing of $|s_l|: M\rightarrow \R$ and thus with the order of vanishing of the elements of $s_l$ in the sense of (\ref{laurent}), by the Morse Lemma we can thus chose $r={\bf 1}$. On the other hand, $T_{l,2}|(s_l)^\perp$ (where $(\cdot)^\perp$ here refers to the pointwise $\omega(\cdot,J \cdot)$-orthogonal complement) can be always chosen to be the identity outside a neighbourhood $U_l$ of $\mathcal{C}_l$ and $U_2$ of $\mathcal{C}_2$ and the identity, composed with its restriction to any given one-dimensional subspace of $(s_l)^\perp$ (see the proof of Theorem \ref{theorem3} below) multiplied with appropriate elements of a sufficiently great (positive) power of $\mathfrak{p}(\mathcal{C}_{l})$ on $U_l$ and elements of appropriate positive (again, sufficiently great) of $\mathfrak{p}(\mathcal{C}_{2})$ on $U_2$ from which the claim for $T_{l,2}$ and $\tau=1$, namely $\tilde T_{l,2}(\mathcal{M}^{r,T_{l,2}}_\infty(T^*_{\C}M, \mathcal{C}_{l}))\subset \mathcal{M}^{\tilde r}_\infty(T_{\C}^*M, \mathcal{C}_{2})$ follows already, if we take into account that ${\rm pr}_1\circ \hat s_l$ and ${\rm pr}_1\circ \hat s_2$ in itself are smooth on $M$. A similar argument holds for $\overline T^*|\mathcal{M}^{\tilde r, T^*}_\infty(T_{\C}^*M, \mathcal{C}_{2})$, where $T^*$ acts on $T_{\C}^*M$ mapping $T_{\C}^*M^{\pm,J}\simeq T^*M$ bijectively onto $T_{\C}^*M^{\pm,T^*.J}\simeq T^*M$ ($J$ acting on $T^*M$ via $\omega$-duality) and $J$ is the almost complex structure corresponding to the $\hat U(n)$-reduction of $P$ induced by $\pi_{Mp}(\hat s_l)$. Note finally that in the case of a Kaehler manifold, we can choose an $\omega$-compatible almost complex structure $J$ which is parallel wrt the given symplectic connection $\nabla$ and an associated Frobenius structure $\hat s_l: M\rightarrow \hat P_{G/\tilde G}$ which is non-singular by Proposition \ref{classi}, thus the claimed trait of $\overline T^*$ follows trivially by smoothness.
\end{proof}
For the following recall the definition of the Koszul-bracket $[\cdot, \cdot]_{J, \Lambda_{\tilde G}}: \Gamma(T^*M)^2 \rightarrow \Gamma(T^*M)$ for a given $\omega$ compatible almost complex structure $J$ in Definition \ref{koszul} and a chosen $\hat O(n)$-reduction $P_{\tilde G}$ of $P^J$. we will below assume that we have chosen a symplectic connection $\nabla$ reducing to $P_{\tilde G}$, i.e. $\nabla J=0$ and $\nabla$ preserves the Lagrangian distribution $\Lambda_{\tilde G}: M\rightarrow {\rm Lag}(T^*M, \omega)$ defining $P_{\tilde G}$. Also, the almost complex structures $(J, J_T)= (J_1, J_{T,1})$ implicit in the objects ($R$-modules in th sense of Definition \ref{blaJext}) defined below will always be those associated to the primordial sections $\hat s_l, \hat s_2: M\rightarrow \hat P_{G/\tilde G}$ to which these objects are associated in the sense described above, if not remarked otherwise. Also, we will in the following denote by $J_0$ resp. $J_{T,0}$ the {\it non-singular} almost complex structures associated to $\hat s^0_l, \hat s^0_2: M\rightarrow \hat P_{G/\tilde G}$ in the sense of Assumption \ref{ass3}, that is before the endpoint-singular homotopy described in Proposition \ref{classi} resp. Theorem \ref{genclass} is applied, the corresponding map in the second member of (\ref{matrix1}) is denoted by $\overline T^*_0$. Recall also the notion of symplectic reduction of a symplectic vector space $(V, \omega)$ art a coisotropic subspace $W\subset Ann(W)$, where $Ann(W)=\{u\in V, \omega(u,v)=0 {\rm \ for\ all}\ v \in W\}$. Then $W/Ann(W)$ is a symplectic vector space wrt the projected symplectic form $\omega$. Further, for a Lagrangian subspace $L\subset V$ we have that $R_W(L)=(L\cap W)/Ann(W)$ is Lagrangian in $W/Ann(W)$.\\
Let now $\pi_{\mathcal{J}}: \mathcal{J}\rightarrow M$ denote the twistor bundle bundle of $\omega$-compatible almost complex structures on $M$. Relative to a fixed $\omega$-compatible almost complex structure $J$, inducing a $\hat U(n)$-reduction of a chosen metaplectic $P$ we can identify $\mathcal{J}\simeq P_{\hat U(n)} \times_{\rho,Ad}Sp(2n)/U(n)$. We note (compare the discussion below (\ref{dia1})) that for any fixed smooth section $J: M\rightarrow \mathcal{J}$, any auxiliary section $J_T: M\rightarrow \mathcal{J}$ in this sense gives rise to a map $T\in  {\rm End}^0_\omega(T^*M)$ or more generally, in the above discussed sense, our sections $\hat s_l, \hat s_2: M\rightarrow \hat P_{G/\tilde G}$ give rise to elements $T_{l,2} \in {\rm End}^r_\omega(T^*M\setminus \pi^{-1}(\mathcal{C}_{l}))$ resp. $\overline T^* \in {\rm End}^r_\omega(T^*M\setminus \pi^{-1}(\mathcal{C}_{2}))$. By complex linear continuation to $T^*_{\C}M$ and fibrewise projectivization, that is denoting the fibrewise projectivization of $T^*_{\C}M$ by $\PI^*M,$ the latter two can be interpreted as fibrewise symplectic endomorphisms in the bundle $\PI^n$, to be denoted by ${\rm End}^r_\omega(\PI^*M,\mathcal{C}_{l})$ resp. ${\rm End}^r_\omega(\PI^*M,\mathcal{C}_{2})$. Let now $c_1(L)\in H^2(\PI^n, \C)$ (cf. \cite{gilkey}) be the first Chern form induced by the tautologial line bundle $L$ over $n$-dimensional complex projective space $\PI^n$, i.e if $s=(z_0, \dots, z_n)$ locally over $U\subset\PI^n$, i.e.
\[
c_1(L)=\frac{i}{2\pi}\partial \overline \partial (1+|z|^2),
\]
not that $c_1(L)$ is invariant under the action of $U(n)$ on $\PI^n$. Globalizing this construction to $\PI^*M$, assuming that $T^*M$ and thus $\PI^*M$ is equipped with the canonical symplectic structure $\tilde \omega$ and a nearly complex structure $\tilde J$ that is compatible with the one on $M$ in the sense that $s_0^*\tilde \omega(\cdot, \tilde J\cdot)=\omega(\cdot J, \cdot)$, symplectic connection $\tilde \nabla$ satisfying $\tilde \nabla \tilde J=0$ and the existence of a $\tilde \nabla$-invariant horizontal Lagrangian polarization $\mathcal{H}\subset T(T^*M)$ wrt $\tilde \omega$ we arrive at a complex line bundle $\mathbb{L}\rightarrow \PI^*M$ and corresponding Chern class $c_1(\mathbb{L},\PI^*M) \in H^2(\PI^*M, \C)$ by defining over any open subset $U\subset M$ and a given local section $s_U:U\rightarrow P_{\hat U(n)}$  trivializing $\PI^*M|U$ by duality $\mathcal{L}|U=L\times (\PI^n\times U)$ and the complex two forms $c_1(\mathbb{L},\PI^*M)|(V\PI^*M|U)=c_1(\mathcal{L}|U)$ considering the observed $U(n)$-invariance of $c_1(L)$, here $V\PI^*M\subset T\PI^*M$ being the vertical tangent bundle of $\PI^*M$ and setting $c_1(\mathbb{L},\PI^*M)=0$ on $\mathcal{H}$. Since $\mathcal{H}$ is $\tilde \nabla$-parallel, this prodecure gives a well-defined element of $H^2(\PI^*M, \C)$, denoted by $c_1(\mathbb{L},\PI^*M)$. For two given smooth (possibly singular at $\mathcal{C}_2$) sections $J, J_T: M\rightarrow \mathcal{J}$ satisfying $\nabla J=\nabla J_T=0$ and the associated $\overline T^* \in {\rm End}^r_\omega(T^*M\setminus \pi^{-1}(\mathcal{C}_{2}))$ we note that the latter acts on $\mathbb{L}\subset T^*(\PI^*M)$ by pullback (for the latter inclusion, we refer to \cite{gilkey}, Lemma 2.3.2), that is maps $\mathbb{L}$ to $\overline T^*\mathbb{L}\subset T^*(\PI^*M)$, we associate an element $c_1(\PI^*M, J, J_T, \overline T^*)=c_1(\overline T^*\mathbb{L}, \PI^*M) \in H^2(\PI^*M, \C)$. Analogously for two closed sections $s_l, s_2: M\rightarrow T^*M$ we associate with $T_{l,2} \in {\rm End}^r_\omega(T^*M\setminus \pi^{-1}(\mathcal{C}_{l}))$ the element $c_1(\PI^*M, s_1, s_2, \overline T_{l,2})= c_1(\overline T_{l,2}^*\mathbb{L}, \PI^*M) \in H^2(\PI^*M, \C)$. Note that since $\overline T_{l,2}$ is a priorily not uniquely determined by $s_1, s_2$, the class $c_1(\PI^*M, s_1, s_2, \overline T_{l,2})$ still waits for a correct definition, the ambiguity will be fixed in the proof of Theorem \ref{theorem3} below. Finally, for a closed section $s:M\rightarrow T^*M$, we can look at its projectivization $s_{\Pi}:M\rightarrow \PI^*M$ (here $\PI^*M$ should be understood as the bundle that arises from the fibrewise projective completion $\PI(T_x^*M \oplus \C)$ of $T^*_{\C}M$) and define two elements of $\Omega^2(M, \C)$ by
\[
\mathfrak{c}_1(M, s,\overline T^*)= s^*_{\Pi}c_1(\PI^*M, J, J_T, \overline T^*), \quad \mathfrak{c}_1(M, s, \overline T_{l,2})=s^*_{\Pi}c_1(\PI^*M, s_1, s_2, \overline T_{l,2}).
\]
Note that both $\overline T_{l,2}$ and $\overline T^*$ still wait for a precise (unambiguous) definition, which will be given in the proof of Theorem \ref{theorem3}. We can state nonetheless
\begin{prop}\label{blachern}
Let $s:M\rightarrow T^*M$ be a closed section and assume the existence of a $\tilde \nabla$-invariant horizontal Lagrangian polarization $\mathcal{H}\subset T(T^*M)$ for a given $\hat U(n)$-reduction $P_{\hat U(n)}$ of $P$ and a given symplectic connection satisfying $\nabla J=0$. Assume we have a section $T \in \mathcal{E}(T^*M, M)$ such that $dT=0$ where $T$ is considered as an element of $\Omega^2(M, TM)$. Then we have that the form $\mathfrak{c}_1(M, s,T)=s_{\Pi}^*c_1(\overline T^*\mathbb{L}, \PI^*M) \in \Omega^2(M,\C)$ is closed and defines thus a class in $H^2(M, \C)$. In our concrete situation described above, assume that $s_l \in \mathcal{E}^0(T^*M, M\setminus \mathcal{C}_{l})$, $s_2 \in \mathcal{E}^0(T^*M, M\setminus \mathcal{C}_{2})$ are both closed, with $s_l$ $C^1$-small and $s_l, s_2$ intersecting the zero section of $T^*M$ transversally. Then $\mathfrak{c}_1(M, s_l,\overline T^*)$ and $\mathfrak{c}_1(M, s_2, \overline T_{l,2})$ define integral cohomology classes, that is elements of $H^2(M, \Z)$.
\end{prop}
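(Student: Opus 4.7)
The proposition has two parts. The first asserts closedness of $\mathfrak{c}_1(M, s, T) = s_\Pi^* c_1(\overline T^*\mathbb{L}, \PI^*M)$, so that it defines a class in $H^2(M, \C)$. The second strengthens this to integrality in the concrete situation of transversal closed sections $s_l, s_2$ with $s_l$ being $C^1$-small. The strategy in both cases is to show that the relevant object $\overline T^*\mathbb{L}$ (resp.~$\overline T_{l,2}^* \mathbb{L}$) is, or extends across the singular locus to, an honest complex line bundle on $\PI^*M$, whence its first Chern form is closed and its first Chern class lies in $H^2(\PI^*M, \Z)$; pullback by the smooth map $s_\Pi$ then transports closedness resp.~integrality to $M$.

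\textbf{Closedness.} One verifies first that $c_1(\mathbb{L}, \PI^*M)$ itself is closed. In any local trivialization $\PI^*M|U \simeq \PI^n \times U$ coming from a local section $s_U: U \to P_{\hat U(n)}$, the vertical part is the Fubini--Study form $c_1(L) = \frac{i}{2\pi}\partial \overline\partial \log(1+|z|^2)$ on $\PI^n$, which is both closed and $U(n)$-invariant, while the horizontal part is $0$ by construction on $\mathcal{H}$. The $\tilde\nabla$-invariance (flatness) of the horizontal Lagrangian polarization $\mathcal{H}$ makes the mixed and pure-horizontal terms of $d\,c_1(\mathbb{L}, \PI^*M)$ vanish, while the $U(n)$-invariance of $c_1(L)$ eliminates any contribution from the change of trivialization across overlaps. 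The hypothesis $dT=0$ is precisely what makes $\overline T^*$ act on $\mathbb{L}$ via a well-defined cocycle, so that $\overline T^* \mathbb{L}$ is a genuine line bundle (on the locus where $T$ is non-singular) and its Chern form is again closed; pullback by the smooth section $s_\Pi$ then yields a closed form $\mathfrak{c}_1(M, s, T) \in \Omega^2(M, \C)$.

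\textbf{Integrality.} In the concrete situation, Assumption/Lemma \ref{ass3} bounds the pole order of $\overline T^*, \overline T_{l,2}$ along $\mathcal{C}_2$ resp.~$\mathcal{C}_l$ by the multi-index $r=\tilde r=\mathbf{1}$ (via transversality and the Morse local form of $s_l, s_2$). These simple poles correspond exactly to fibrewise passage through the zero section of $T^*M$; but after projectivization in $\PI^*M = \PI(T^*M \oplus \C)$, this becomes passage through a single smooth point of $\PI^n$, i.e.~a non-singular event. Consequently $\overline T^*\mathbb{L}$ and $\overline T_{l,2}^*\mathbb{L}$ extend across $\mathcal{C}_l \cup \mathcal{C}_2$ to honest complex line bundles on all of $\PI^*M$, and hence their first Chern classes lie in $H^2(\PI^*M, \Z)$. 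Since $s_l, s_2$ are globally smooth and vanish transversally, the projectivized sections $s_{\Pi,l}, s_{\Pi,2}: M \to \PI^*M$ are smooth everywhere, so pullback preserves integrality, yielding $\mathfrak{c}_1(M, s_l, \overline T^*), \mathfrak{c}_1(M, s_2, \overline T_{l,2}) \in H^2(M, \Z)$.

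\textbf{Main obstacle.} The technical heart of the argument is the extension of $\overline T^* \mathbb{L}$ and $\overline T_{l,2}^* \mathbb{L}$ across $\mathcal{C}_l \cup \mathcal{C}_2$ to a genuine line bundle (not merely a coherent sheaf with torsion at the singular points). This rests on three ingredients: the Morse local normal form for $s_l, s_2$ supplied by transversality; the formal Laurent expansion pole-bound from Lemma \ref{laurent1} together with $r=\mathbf{1}$ from Assumption/Lemma \ref{ass3}; and the explicit homotopy through non-singular irreducible standard Frobenius structures provided by Proposition \ref{classi}, whose singular blow-up at $\mathcal{C}_l$ (resp.~$\mathcal{C}_2$) exactly compensates the simple vanishing of $s_l$ (resp.~$s_2$). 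Given this extension, integrality follows either directly or by deforming along the homotopy and invoking the discreteness of the inclusion $H^2(\PI^*M, \Z) \hookrightarrow H^2(\PI^*M, \C)$ to reduce to the manifestly non-singular endpoint, where $c_1$ of a complex line bundle is classically integral.
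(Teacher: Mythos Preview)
Your approach differs from the paper's in a meaningful way. The paper does not argue by extending $\overline T^*\mathbb{L}$ to an honest line bundle on all of $\PI^*M$ and then invoking the classical integrality of $c_1$. Instead it constructs, on each local chart $\tilde U_j = U \times (\PI^n \cap V_j)$, an explicit $1$-form
\[
\alpha_{U_j} = \sum_{i} \frac{z_j\, d\overline z_j}{1+|z|^2},
\]
pulls these back by $(\overline T^*)^*$, and shows that the resulting forms assemble into a global $1$-form $\tilde\alpha$ on the pulled-back principal bundle $\mathcal{P}=\pi^*(P_{\hat U(n)})$ which extends by continuity across the singular locus. The key step is that $\tilde\alpha$ descends to a $\C/\Z$-valued $1$-form $\alpha$ on $\PI^*M$, and $s^*\alpha$ is a primitive for the reduction modulo $\Z$ of $\mathfrak{c}_1(M,s,\overline T^*)$. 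Integrality then follows from the Cheeger--Simons theory of differential characters: a closed $2$-form whose mod-$\Z$ reduction is exact represents an integral class. Closedness is obtained as a byproduct of the same computation rather than via a separate Bianchi-type argument.

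Your line of attack---extend the pulled-back line bundle across $\mathcal{C}_{l,2}$ and quote standard Chern-class integrality---is a natural alternative, but the extension step is not adequately justified as written. The sentence ``simple poles correspond to passage through the zero section, which after projectivization becomes a smooth point'' conflates the pole of the \emph{endomorphism} $\overline T^*$ (a matrix with entries blowing up at $\mathcal{C}_2$) with the vanishing of a \emph{section}. A fibrewise linear map $T^*_xM\to T^*_xM$ whose matrix has a pole need not induce a well-defined self-map of $\PI(T^*_xM)$ at the singular fibre: whether it does depends on the precise structure of the polar part (e.g.\ whether all entries blow up homogeneously), and this is not supplied by the order-$\mathbf 1$ bound of Assumption/Lemma~\ref{ass3} alone. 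The paper circumvents this issue entirely by working at the level of the $\C/\Z$-valued primitive, for which only a continuity statement on the principal bundle is needed.
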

\begin{proof}
Consider an open cover $\mathcal{U}$ of $M\setminus \mathcal{C}_{l,2}$ so that for any $U\in \mathcal{U}$ we have that $P_{\hat U(n)}|U$ is trivial, that is $P_{\hat U(n)}|U\simeq U\times \hat U(n)$. If $\pi:\PI^*M\rightarrow M$ is the canonical projection, consider $\overline T^*\mathbb{L}|\pi^{-1}(U)$. Consider over $\pi^{-1}(U)$ the trivializing open sets for $\mathbb{L}$, of the form $\tilde U_j=U \times (\PI^n\cap V_j), j\in \{1,\dots, n\}$, where $V_j\subset \PI^n$ is the open set over which there exists the local coordinate system $(z^j_0,\dots, z_n^j)$ obtained by $z^j_k=z_k/z_j$ and deleting $1=z_j/z_j$. Over any of the 
$\tilde{U}_j$, we can define the $1$-form
\[
\alpha_{U_j}=\sum_{i=1}^n \frac{z_j d\overline z_j}{1+|z|^2}.
\]
Note that since $\overline T^*$ (as well as $T_{l,2}$) are pointwise isomorphisms of the fibres of $\PI^*M$, and this isomorphism decends to the fibres of $L$ over $M\setminus \mathcal{C}_{l,2}$, we can consider $\tilde z_j= T^*(z_j)$, analogously for $T_{l,2}$, as a set of coordinate systems of $\pi^{-1}(U)$ with respective charts $(\overline T^*(\tilde U_j), \tilde z_j)$ resp. $(T_{l,2}(\tilde U_j), \tilde z_j)$, we will use the same notation for $\tilde z_j$ and $z_j$. In this sense, the forms $(\overline T^*)^*(\alpha_{\tilde U_j})$ assemble to a global one form $\tilde \alpha$ on $\mathcal{P}=\pi^*(P_{\hat U(n)})$, note that $\mathcal{P}$ is a $\hat U(n)$-bundle over $\PI^*(M\setminus \mathcal{C}_{l,2})$, which extends by continuity to $\PI^*M$. It is then easy to see that $\tilde \alpha$ descends to a $\C/\Z$-valued one-form $\alpha$ on $\PI^*M$ (where $\Z$ is here the subring of real integers in $\C$) and that $s^*\alpha$, where $s$ is as in the statement of the proposition, is a primitive for $\pi_{\C/\Z}\circ \mathfrak{c}_1(M, s,\overline T^*)$, where $\pi_{\C/\Z}: \C \rightarrow \C/\Z$ is the canonical projection. This already gives the assertion for the class $\mathfrak{c}_1(M, s,\overline T^*)$ defined by $\overline T^*$, referring to the general theory of differential characters, cf. \cite{cheegersimons}. Of course the above arguments also show that $\mathfrak{c}_1(M, s,\overline T^*)$ is closed. The case of $\mathfrak{c}_1(M, s, \overline T_{l,2})$ is analogous. 
\end{proof}
Note that a closed section $s \in \mathcal{E}(T^*M, M\setminus \mathcal{C}_l)$ (with zero locus $\mathcal{C}_l$) and an element $J \in \mathcal{J}$ define a real $2$-plane section $E_{s, j}\subset T^*(M\setminus \mathcal{C}_l)$ by setting $E_{s, J}=span_{\R}(s, J^*s)$ over $M\setminus \mathcal{C}_l$ and thus a trivialization of a complex $1$-dimensional subbundle $L_{s, J}\subset \PI^*(M\setminus \mathcal{C}_l)$. Over each $x \in M\setminus \mathcal{C}_l$, $L_{s, J}$ spans a $1$-cell in $\PI^n$ wrt to the cell decomposition of $\PI^n$ into $n+1$ different cells of real dimension $2k, 0\leq k\leq n$ (that is, $E_{s, J}$ fixes a reduction of $P$ consisting of exactly those unitary frames whose first two basis vector coincides with the pair $(s, J^*s)$ spanning $E_{s,J}$, $L_{s,J}$ is thus a trivial line bundle over $M\setminus \mathcal{C}_l$ and we have $\PI^*(M\setminus \mathcal{C}_l)\simeq L_{s, J} \oplus \PI_{n-1}^*(M\setminus \mathcal{C}_l)$, where $\PI_{n-1}^*(M\setminus \mathcal{C}_l)$ has fibre $\PI^{n-1}$ over $M\setminus \mathcal{C}_l$ (but is in general non-tivial). We can then define a projection $\pi_{S}:\PI^*(M\setminus \mathcal{C}_l)\rightarrow SL_{s, J} \oplus \PI_{n-1}^*(M\setminus \mathcal{C}_l)$, where $SL_{s, J}$ is the unit sphere $S^1$-bundle in $L_{s, J}$, by defining locally $\pi_{S}((x, (z, v))= (x, (\pi_{S\PI^1}(z), v)), x \in M, v \in \PI^{n-1}$, where $\pi_{S\PI^1}: \PI^{1}\rightarrow S^1$ is the projection onto the equator $S=S^1\subset \PI^{1}\simeq S^2$. Note that since $H^*(\PI^{n},\C)\simeq \C\oplus 0\oplus \C \dots \oplus \C$ vanishes in uneven dimensions, by the long exact cohomology sequence for the pair $(\PI^n, S)$ (considering $S\subset \PI^1\subset \PI^n$), we can infer an isomorphism $H^q(\PI^n, S, \C)\simeq H^{q-1}(S, \C), 1\leq q\leq 2n$. We denote the generator of $H^1(S^1, \C)$ by $\eta=d\theta$, where $\theta: S^1 \rightarrow \R/\Z$ is the angular coordinate on $S^1$, and define the pullback of the (vertical) one-form $\eta \oplus 0 \in \Omega^1(SL_{s, J} \oplus \PI_{n-1}^*(M\setminus \mathcal{C}_l), \C)$ by $\pi_S$ as $\tilde \eta_s=\pi_S^*(\eta_s) \in \Omega^1(\PI^*(M\setminus \mathcal{C}_l), \C)$ (note that again $\eta \in H^1(S^1, \C)$ here is understood to be globalized to a $1$-form $\eta_s$ on $SL_{s, J}$ by choosing a horizontal distribution $\mathcal{H}\subset TL_{s, J}$ which is parallel wrt a given symplectic connection $\nabla$ and setting $\eta_s(v)=0\ \forall v \in \Gamma(\mathcal{H})$). Then the fact that $d\tilde \eta_s=0$ and in fact $\tilde \eta_s\in H^1(\PI^*(M\setminus \mathcal{C}_l), \Z)$ follows quite analogously as in in the proof of Proposition \ref{blachern} which we will not formalize here consequently. By the above construction, we have moreover that 
\[
\hat \eta_s:= s^*\tilde \eta_s \in H^1(M\setminus \mathcal{C}_l, \Z)
\]
is well-defined, we will address in the proof of Theorem \ref{theorem3} below the question in which sense we have that $\hat \eta_s$ extends to an integral element of $H^1(M, \C)$.\\
Note that for any module $M$ over a ring $R$ and given a fractional ideal $\mathfrak{k}=\mathfrak{r}/r\subset K$ with $r \in R$ and an ideal $\mathfrak{r}\subset R$ and $K$ the quotient field of $R$, we can regard $\mathfrak{k}$ as a ring by inheriting the multiplication in $\mathfrak{r}\subset R$. Thus we can regard the localization $M_{(0)}$ of $M$ at $(0)$ as a module over the fractional ideal $\mathfrak{k}\subset K$ with this ring structure. Assume now that we have a pair of $\mathfrak{k}$-modules $\overline P=(P_1,P_0)$ where $P_0, P_1$ are $\mathfrak{k}$-submodules of the localization $M_{(0)}$ of an $R$-module $M$ and $\mathfrak{k}$ is a fractional ideal in the quotient field $R$  of $K$ as above. Let $x \in \mathfrak{k}$ so that there exists a pair of maps $\overline p=(p_1,p_0), p_1:P_1 \rightarrow P_0, p_0: P_0 \rightarrow P_1$ so that $p_1\circ p_0=(x){\bf 1}_{P_0}$ and $p_0\circ p_1=(x){\bf 1}_{P_1}$ so that we have, after reduction by the ideal $(x)\subset \mathfrak{k}$, a complex $\mathcal{P}(p_1, p_0)$ over $B= \mathfrak{k}/(x)$ as in (\ref{matrixf}) (resp. after application of the functor $\cdot_R \otimes B$). We can alternatively regard $\mathcal{P}(p_1, p_0)\otimes_R B$ as a complex over $R$ resp. $\C[R^*]$ where $\C[R^*]$ is the group ring over $\C$ generated by the (multiplicative) group of units $R^*$ of $R$.
Note that we have here understood that $P_0, P_1$ are modules over the ring $R$ and $R$ is an algebra over $\C$, that is the group ring $\C[R^*]$ acts on the set of finite formal sums $\overline P_0[R^*]$ and $\overline P_1[R^*]$ if $\overline P_0, \overline P_1$ denote reduction of $P_0, P_1$ by $B$, in the classical way. We will denote the resulting complex over $\C[R^*]$ again by $\mathcal{P}(p_1, p_0)$. We denote the cohomology of the image of $\mathcal{P}(p_1, p_0)\otimes_R B$ considered as a complex over $R$ resp. $\C[R^*]$ under $Hom(\cdot, R)$ resp. $Hom(\cdot, \C[R^*])$ by $H^*(\overline P, (B,R))$ resp. $H^*(\overline P, (B,\C[R^*]))$.\\
For the following theorem, we work in the real analytic category, that is we assume that $M$ is real analytic, the maps $s_l, s_2$ are real analytic an $J, J_T \in \mathcal{J}$ are real analytic, expressions like $R=C^\infty(M, \C)$ mean real analytic functions with complex values on $M$ (that is each $f \in R=C^\infty(M, \C)$ can be locally written as a converging power series with complex coefficients). Above we have nearly proven the first assertion of
\begin{theorem}\label{theorem3}
Assume that $\hat s_l, \hat s_2: M\rightarrow \hat P_{G/\tilde G}$ define (cf. Theorem \ref{genclass}) a {\it dual} pair of (generalized) standard irreducible, in general singular Frobenius structures $(\Omega_l, \mathcal{L}_l)$ and $(\Omega_2, \mathcal{L}_2)$ on $(M,\omega)$ with $J, J_T$ associated to $\hat s_l, \hat s_2$ $\omega$-compatible, $\nabla J=0, \nabla J_T=0$, in the sense (and with notation) of Theorem \ref{theorem2} and assume that the $\hat O(n)$-reductions $P^J_L$, $P^{J_T}_L$ associated to $\hat s_l$, $\hat s_2$ are equivalent. Let $\mathcal{C}_{l}, \mathcal{C}_{2}$ the sets of (isolated, but not neccessarily non-degenerated) intersection points of the associated closed sections $s_l, s_2: M\rightarrow T^*M$ with the zero-section of $T^*M$ and $s_l$ being exact with primitive $S:M \rightarrow \R$, where $dS$ is diffeomorphic to the graph of a $C^1$-small Hamiltonian diffeomorphism $\Phi:M\rightarrow M$ as described above Theorem \ref{hopf}. Then with the above notations, there are (well-defined, while non-unique) maps $T_{l,2}:\mathcal{M}^{r, T_{l,2}, J_0}_\infty(T^*_{\C}M, \mathcal{C}_{l})\rightarrow \mathcal{M}^{\tilde r, J_{T,0}}(T_{\C}^*M, \mathcal{C}_{2})$ and $\overline T^*: \mathcal{M}^{\tilde r, \overline T^*_0, J_{T,0}}_\infty(T_{\C}^*M, \mathcal{C}_{2})\rightarrow \mathcal{M}^{r, J_0}_\infty(T^*_{\C}M, \mathcal{C}_{l})$ for appropriate $r \in (\N^+)^k, \tilde r \in (\N^+)^{\tilde k}$, where we further restrict $\overline T^*$ to $\mathcal{M}^{\tilde r, T_{l,2},\tilde T^*_0}_\infty(T_{\C}^*M, \mathcal{C}_{2}):=(\overline T^*)^{-1}(\mathcal{M}^{r, T_{l,2}, J_{0}}_\infty(T^*_{\C}M, \mathcal{C}_{l}))\cap \mathcal{M}^{\tilde r, \overline T^*_0, J_{T,0}}_\infty(T_{\C}^*M, \mathcal{C}_{2})$, arriving at a map $\overline T^*:\mathcal{M}^{\tilde r, T_{l,2}, \overline T^*_0}_\infty(T_{\C}^*M, \mathcal{C}_{2})\rightarrow \mathcal{M}^{\tilde r, T_{l,2}, J_{0}}_\infty(T_{\C}^*M, \mathcal{C}_{l})$ (in the transversal case we have $r= {\bf 1}, \tilde r= {\bf 1}$). Then we have setting $\tilde T_{l,2}:=W_{\tau, \eta} \ast T_{l,2},\ \tau, \eta \in \{0,1\}$ on $M \setminus \mathcal{C}_{l,2}$ that ${\rm im}(\tilde T_{l,2})\subset \mathcal{M}^{\tilde r, T_{l,2}, \overline T^*_0}_\infty(T_{\C}^*M, \mathcal{C}_{2})$ futhermore it holds that $\overline T^*.J=J_{T}$ and we have for the respective cases $\tau, \eta \in \Z_2$ 
\begin{equation}\label{matrix2}
\overline T^* \circ \tilde T_{l,2}= \tilde T_{l,2}\circ \overline T^*= W_{\tau, \eta}\cdot I^{\rm Gr}_n=W_{\tau, \eta} \ast,\quad d \tilde T_{l,2}= d \overline T^*=0,
\quad (\tilde T_{l,2})^*[\cdot, \cdot]_{J_{T, \overline \eta}, \Lambda_{\tilde G}}=[\cdot, \cdot]_{J_{\overline \tau}, \Lambda_{\tilde G}},
\end{equation}
and the latter two conditions are understood to hold on $M\setminus \mathcal{C}_{l,2}$. Further, $W_{\tau, \eta}=\mathcal{B}^{-1}_{\C}(J, J_T,s_2)\frac{(|s_l|^2)^\tau}{(|s_2|^2)^\eta}$, thus $W_{\tau, \eta} \in C_0^\infty(M ,\mathcal{C}_{l,2}, \PI^1)$ for $\eta= 1$ while $W_{\tau, \eta} \in C_0^\infty(M ,\mathcal{C}_{l,2}, \C)$ for $\eta=0$, the former being elements of the quotient field $K_0$ of $R_0=C^\infty_0(M, \C)$ defined by $\mathcal{C}_{l,2}$ as described above. Further $d$ is the exterior derivative induced on ${\rm End}_\omega(T^*M\setminus \pi^{-1}(\mathcal{C}_{l,2}))$ by $d$ acting on $\Omega^1(M, \C)$, interpreted as $d \tilde T_{l,2} \in \Omega ^2(M\setminus \mathcal{C}_{l,2}, TM)$ (see the remark below). Let $\mathcal{H}^*(\tau, \eta)=H^*((\overline T^*,\tilde T_{l,2}), (R_0/W_{\tau, \eta}, R_0))$ be the $\Z_2$-graded cohomology group associated to the matrix factorization $(\overline T^*,\tilde T_{l,2})$. Then we have the following results: 
\begin{enumerate}
\item $\mathcal{H}^*(\tau, \eta)=0$ if $\tau=0$, $\eta=0$ and $\tilde \chi_{J,J_T}=0$ in $H^1(M, \Z)$.
\item If $\tau=1$ or $\eta=1$ or $\tilde \chi_{J,J_T}\neq 0$ we have in general $\mathcal{H}^*(\tau, \eta)\neq 0$ while $\mathcal{H}^*(\tau, \eta)$ is finitely generated over the implied fractional ideals. For the case $(\tau, \eta)=(1,1)$ the module $\mathcal{H}^*(\tau, \eta)$ is non-zero and of finite type while in the case that both $s_l$ and $s_2$ intersect the zero section of $T^*M$ transversally and $\tilde \chi_{J,J_T}=0$, its Euler characteristic $\chi(R_0/W_{1,1})={\rm dim}_{\C}(\mathcal{H}^1(1, 1))- {\rm dim}_{\C}(\mathcal{H}^0(1, 1))$ vanishes.
\item (Riemann-Roch) Assume $M$ is compact, then in general the Euler characteristic $\chi(R_0/W_{1,1})={\rm dim}_{\C}(\mathcal{H}^1(1, 1))- {\rm dim}_{\C}(\mathcal{H}^0(1, 1))$ of the $\Z_2$ graded cohomology $\mathcal{H}^*(\tau, \eta)=H^*((\overline T^*,\tilde T_{l,2}), (R_0/W_{\tau, \eta}, R_0))$ is given by
\[
\chi(R_0/W_{1,1})= \langle \hat \eta_{s_2}, \tilde \chi_{J,J_T}\rangle +\langle \frac{s_2^*c_1(\mathbb{L},\PI^*M)}{\lvert s_l^*c_1(\mathbb{L},\PI^*M)\rvert^2},\mathfrak{c}_1(M, s_l, \overline T_{l,2})\rangle - \langle \frac{s_l^*c_1(\mathbb{L},\PI^*M)}{\lvert s_2^*c_1(\mathbb{L},\PI^*M)\rvert^2},\mathfrak{c}_1(M, s_2,\overline T^*)\rangle,
\]
where $\langle \cdot, \cdot\rangle$ denotes the cohomological Poincar\'e duality pairing defined by $(\omega, J)$ on $M$, thus giving the usual $L^2$-metric on $\Omega^*(M,\C)$ induced by $(\omega,J)$ and $\lvert\cdot\rvert_J$ denotes the induced pointwise norm on $\Lambda^*(T_xM,\C)_x, x \in M$. We note that the first integrand (involving $\hat \eta_{s_2}$) above is understood as being defined over $M\setminus \mathcal{C}_2$ but its integral is shown to converge in an appropriate sense on $M$.
\end{enumerate}
We have the following dichotomy (in the transversal case): if at least one of the Frobenius structures $(\Omega_l, \mathcal{L}_l)$ and $(\Omega_2, \mathcal{L}_2)$ is singular, either the maps $\tilde T^*, \tilde T_{l,2}$ smoothly extend to $\mathcal{M}^{\bf 1}(T^*_{\C}M, \mathcal{C}_{l}\setminus \mathcal{C}_{l}\cap \mathcal{C}_{2})$ resp. $\mathcal{M}^{\bf 1}(T^*_{\C}M, \mathcal{C}_{2}\setminus \mathcal{C}_{l}\cap \mathcal{C}_{2})$ or the map $W$ smoothly extends to a non-singular function $W \in C^\infty(M ,\mathcal{C}_{l,2}\setminus \mathcal{C}_{l}\cap \mathcal{C}_{2}, \C)$ (or none of these two alternatives hold), while in the non-singular case, both alternatives hold.\\
On the other hand, given a map $\hat s_l: M\rightarrow \hat P_{G/\tilde G}$ defining a standard irreducible, in general singular Frobenius structure $(\Omega_l, \mathcal{L}_l)$ being induced by the graph of a $C^1$-small Hamiltonian diffeomorphism on $M$ and a pair of fibrewise linear symplectic vector bundle automorphisms $\tilde T_{l,2}, \tilde T^*$ on $T^*(M \setminus \mathcal{C})$ for some discrete $\mathcal{C} \subset M$ (containing the singular locus of $(\Omega_l, \mathcal{L}_l)$) covering the identity on $M\setminus \mathcal{C}$ and satisfying (\ref{matrix2}), if the Euler characteristic of the associated cohomology group $H^*((\overline T^*,\tilde T_{l,2}), (R_0/W,R_0))$ vanishes, we can construct a unique (generalized) standard, in general singular irreducible Frobenius structure $\hat s_2: M\rightarrow \hat P_{G/\tilde G}$, denoted $(\Omega_2, \mathcal{L}_2)$ so that $\hat s_l, \hat s_2$ define a dual (not necessarily transversal) pair in the sense of Theorem \ref{theorem2} and so that $(\Omega_2, \mathcal{L}_2)$ is singular at most on $\mathcal{C}$. Finally, the latter assignment gives a left-inverse to any choice of the former assignment.
\end{theorem}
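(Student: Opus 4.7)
The construction of $T_{l,2}$ and $\overline T^*$ satisfying the matrix factorization equations proceeds in several stages, building on Theorem~\ref{theorem2} and Assumption/Lemma~\ref{ass3}. First one makes Lemma~\ref{affine} explicit by fixing an $Ad$-equivariant $\tilde g : P_G \to G$ with $\hat s_l.\tilde g = \hat s_2$ and choosing a section $\mathfrak{s}_1$ of $\mathrm{pr}_1 : P_G \to T^*M$; this yields $T_{l,2}$ and, via the twistor description, $\overline T^*$. The key observation is that on the one-dimensional $J$-complex distribution $E_{J,s_l}$ the restriction $\mathcal{B}(J,J_T)\lvert E_{J,s_l}$ is realized by the complex-valued function $\mathcal{B}_\C(J,J_T,s_l)$ of (\ref{cocycle2}). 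Setting $\tilde T_{l,2} := W_{\tau,\eta} \ast T_{l,2}$ with $W_{\tau,\eta} = \mathcal{B}_\C^{-1}(J,J_T,s_2)(\lvert s_l\rvert^2)^\tau/(\lvert s_2\rvert^2)^\eta$, the scalar factor is exactly what is needed so that $\overline T^* \circ \tilde T_{l,2}$ acts as multiplication by $W_{\tau,\eta}$ on the localized module structure. The identity $\overline T^*.J = J_T$ is tautological from the definition of $\overline T$, while $d\tilde T_{l,2} = d\overline T^* = 0$ follows from exactness of $dS$ and closedness of $s_2$ combined with the second equation in (\ref{global}); this same equation translates under $\omega$-duality into the Koszul-bracket identity $(\tilde T_{l,2})^*[\cdot,\cdot]_{J_{T,\overline\eta}} = [\cdot,\cdot]_{J_{\overline\tau}}$ by Definition/Lemma~\ref{koszul}.

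For the cohomology statements (1) and (2) the plan is to analyze the $\Z/2\Z$-graded complex separately in each regime. When $(\tau,\eta)=(0,0)$ and $\tilde\chi_{J,J_T}=0$, the function $W_{0,0}=\mathcal{B}_\C^{-1}(J,J_T,s_2)$ is a globally defined unit in $R_0$ (since $\mathcal{B}_\C$ admits a global logarithm precisely when the relative symplectic genus vanishes), so the complex becomes a pair of mutually inverse isomorphisms and $\mathcal{H}^*(0,0)=0$. When $\tau=1$ or $\eta=1$, the presence of $\lvert s_l\rvert^2$ resp.\ $\lvert s_2\rvert^2$ forces the complex to fail exactness precisely at the transversal zeros $\mathcal{C}_l$ resp.\ $\mathcal{C}_2$; finite-dimensionality of $\mathcal{H}^*(\tau,\eta)$ over the fractional-ideal ring then follows from the Morse-type normal form at each zero together with the Whitney expansion of Lemma~\ref{laurent1}. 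The vanishing of the Euler characteristic in the transversal case with $\tilde\chi_{J,J_T}=0$ and $(\tau,\eta)=(1,1)$ is obtained by counting local contributions at $\mathcal{C}_l$ and $\mathcal{C}_2$ with opposite signs, using that $\lvert\mathcal{C}_l\rvert=\lvert\mathcal{C}_2\rvert$ is forced by the matrix factorization identity combined with the first line of (\ref{global}).

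The Riemann-Roch formula (3) is the technical heart. The plan is to realize $\chi(R_0/W_{1,1})$ as a pairing of Cheeger-Simons differential characters, following Proposition~\ref{blachern}: the classes $\mathfrak{c}_1(M, s_l, \overline T_{l,2})$ and $\mathfrak{c}_1(M, s_2, \overline T^*)$ are pullbacks of the tautological Chern form on $\PI^*M$ twisted by the matrix-factorization data, and the angular class $\hat\eta_{s_2}$ captures the winding of $\mathcal{B}_\C(J,J_T,s_2)$ around $\mathcal{C}_2$. One localizes the integral defining $\chi$ near $\mathcal{C}_l\cup\mathcal{C}_2$ via a partition of unity subordinate to small geodesic balls---here real-analyticity guarantees convergence of the pole-order expansions via Lemma~\ref{laurent1}---and recognizes each local contribution as a residue of the relevant Chern form, which globalizes to the stated pairing. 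The hard part is to verify convergence of $\langle\hat\eta_{s_2},\tilde\chi_{J,J_T}\rangle$ across $\mathcal{C}_2$: one must show that the logarithmic singularity of $\hat\eta_{s_2}$ is exactly cancelled by the vanishing of $\tilde\chi_{J,J_T}$ at the same points, which follows from the cocycle identity for $\chi_J$ together with the explicit singular homotopy of Proposition~\ref{classi}.

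The dichotomy at the end follows from comparing the pole orders of $T_{l,2}$, $\overline T^*$ and $W_{\tau,\eta}$ at the possible singular configurations of $(\hat s_l,\hat s_2)$ allowed by Proposition~\ref{classi}, exploiting that one of these three factors must absorb any non-removable singularity. For the inverse construction, given $\hat s_l$ and fibrewise symplectic $(\tilde T_{l,2},\tilde T^*)$ satisfying (\ref{matrix2}), one sets $s_2:=\tilde T_{l,2}(s_l)$ and recovers $\hat s_2$ by equivariantly lifting $s_2$ to $\hat P_{G/\tilde G}$ using the map $\overline T$ encoded in $\tilde T^*$ on the twistor bundle $\mathcal{J}$. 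Vanishing of the Euler characteristic is the precise obstruction to extending $\hat s_2$ smoothly across $\mathcal{C}$; uniqueness (modulo the already-fixed $\hat O(n)$-gauge) follows from rigidity of the lift $\overline T$ via Lemma~\ref{affine}, and the left-inverse property reduces to the tautology $\tilde T_{l,2}(s_l)=s_2$ built into the first line of (\ref{global}).
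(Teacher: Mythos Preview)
Your high-level outline tracks the paper's strategy in broad strokes, but there are two concrete gaps and one genuine methodological difference worth noting.

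\textbf{Gap 1: the construction of $\tilde T_{l,2}$ on the orthogonal complement.} You treat the matrix-factorization identity $\overline T^*\circ\tilde T_{l,2}=W_{\tau,\eta}\ast$ as essentially automatic once the scalar $W_{\tau,\eta}$ is chosen. This is only true on the rank-two symplectic subbundle $\mathcal{S}_l=\mathrm{span}(s_l,Js_l)$, where the first line of (\ref{global}) indeed pins down $\tilde T^0_{l,2}$. On the complement $\mathcal{S}_l^\perp$ there is no such constraint coming from the dual-pair data, and the paper has to \emph{define} $\tilde T^c_{l,2}:=\frac{(|s_l|^2)^\tau}{(|s_2|^2)^\eta}\ast_{J_T}(\overline T^*)^{-1}$ explicitly, where the inverse is taken over the quotient field via the cofactor matrix $(\tilde T^*)^c$. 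One then has to check that this lands in the correct fractional-ideal module, which uses the specific form of the singular homotopy in Assumption/Lemma~\ref{ass3}. Without this two-step construction you have not actually exhibited a $\tilde T_{l,2}$ satisfying (\ref{matrix2}) on all of $T^*_\C M$.

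\textbf{Gap 2: closedness of $\overline T^*$.} Your claim that $d\overline T^*=0$ follows from exactness of $dS$, closedness of $s_2$, and the second line of (\ref{global}) is not correct: those ingredients control $d\tilde T_{l,2}$ restricted to $\mathrm{span}(s_l)$ (via $ds_l=0$, $ds_2=0$), but say nothing about $d\overline T^*$, which is an endomorphism of the full twistor bundle. The paper's argument here is genuinely geometric: assuming the K\"ahler condition (torsion-free $\nabla$ with $\nabla J_0=0$), one parallel-transports an $O(n)$-frame along geodesic rays and uses $\nabla J=0$ to deduce $d_\nabla(\tilde T^*)=0$ in $\Omega^1(M,\mathrm{End}_\omega(T^*_\C M))$, then antisymmetrizes. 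In the non-K\"ahler case this step fails for the undeformed $\tilde T^*_0$ (as the remark below the theorem notes), so the K\"ahler hypothesis is doing real work that your argument does not invoke.

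\textbf{Methodological difference for (3).} Your plan for the Riemann--Roch formula is an analytic localization via partition of unity and residues of Chern forms near $\mathcal{C}_l\cup\mathcal{C}_2$. The paper instead invokes Tate's \emph{algebraic} residue formalism (\cite{tate}): it forms the $\mathfrak{m}$-adic completions $\hat K_p$, $\hat R_p$ at each closed point $p$, argues that $\hat K_S/(K_0+\hat R_S)$ is finite-dimensional over $\C$, and identifies the relevant pairing with $\mathrm{res}^{K_0}_{R_S}(\omega)=\sum_{p\in S}\mathrm{res}^{\hat K_p}_{\hat R_p}(\omega)$. Both routes should in principle localize to the same data, but the algebraic route is what makes the real-analytic hypothesis natural (convergent power series at each $x_i$), whereas your analytic route would need a separate argument for why the local integrals assemble to the stated cohomological pairings. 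Relatedly, your argument for the vanishing of $\chi(R_0/W_{1,1})$ in (2) via ``$|\mathcal{C}_l|=|\mathcal{C}_2|$ forced by the matrix factorization'' is not what the paper does (it appeals to Borel--Moore cycles and the invariance of the Euler characteristic of $M$), and it is not clear that the cardinalities are equal without that deeper input.
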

{\it Remark.} For an element $T \in {\rm End}_\omega(T^*M)$, we interpret $dT$ as an element of $\Omega^2(M, TM)$ using $dT(s)=d(Ts)-T(ds)$, to be distinguished from interpreting $dT$ as an element of $\Omega^1(M, End_\omega(T^*M))$. The reason for this will be clear in the proof below resp. the subsequent sections, where sections of $\Lambda^*(M, TM)$ will be the central objects. Note that for the Kaehler case, if $\nabla$ is thus torsion-free satisfying $\nabla J=0$ resp. in the non-Kaehler case for the 'deformed' almost complex structure on $M \setminus \mathcal{C}_l$ and an associated symplectic connection $\nabla$ as defined in the proof of Proposition \ref{classi}, we can replace the condition $d \tilde T_{l,2}= d \tilde T^*=0$ by  $\nabla \tilde T_{l,2}= \nabla \tilde T^*=0$ on $M\setminus \mathcal{C}_l$ by arguing similarly as in the proof of Proposition \ref{classi}. Note that in the case of non-Kaehler manifolds $d \tilde T^*_0=0$ does in general not hold for the 'undeformed' $\tilde T^*_0 \in {\rm End}_\omega(T^*M)$ associated to $\hat s^0_l, \hat s^0_2: M\rightarrow \hat P_{G/\tilde G}$ as denoted above, the reason for this being the presence of torsion.
\begin{proof}
Given maps $\hat s_l, \hat s_2: M\rightarrow \hat P_{G/\tilde G}$ that define (cf. Theorem \ref{genclass}) a {\it dual} pair of (generalized) standard irreducible, in general singular Frobenius structures, the well-definedness of $\tilde T_{l,2}$ resp. $\overline T^*$ without the conditions $d \tilde T_{l,2}= d \tilde T^*=0$ and $\tilde T_{l,2})^*[\cdot, \cdot]_{J_{T, \overline \eta}}=[\cdot, \cdot]_{J_{\overline \tau}}$ (in the following called first resp. second integrability condition) was proven in Lemma \ref{ass3} resp. Lemma \ref{affine}, still it is not immediately clear that $\tilde T_{l,2}$ resp. $\overline T^*$ can be chosen so that the two integrability conditions are satisfied. As remarked above, we will in the following denote by $J_0$ resp. $J_{T,0}$ the {\it non-singular} almost complex structures associated to $\hat s^0_l, \hat s^0_2: M\rightarrow \hat P_{G/\tilde G}$ in the sense of Assumption \ref{ass3}, that is before the endpoint-singular homotopy described in Proposition \ref{classi} resp. Theorem \ref{genclass} is applied. Note that restricting $d\hat T_{l,2}=d\tilde T_{l,2}|({\rm span}\ s_l)$ when interpreting $d\tilde T_{l,2}$ as an element in $\Omega^2(M, TM)$, the closedness of $\hat T_{l,2}$ on $\mathcal{M}^r_\infty(T^*_{\C}M, \mathcal{C}_{l})\cap \Gamma(T^*(M\setminus \mathcal{C}_l)\cap ({\rm span}\ s_l))$ follows from the definition of $d\hat T_{l,2}(s)=d(\hat T_{l,2}s)-\hat T_{l,2}(ds), s \in \Omega^1(M\setminus \mathcal{C}_l)$, extending $\hat T_{l,2}$ to $\Lambda^*(M\setminus \mathcal{C}_l)$ by multilinearity and using $ds_l=0$ and $d(T_{l,2}s_l)=ds_2=0$. On the other hand, to achieve the well-definedness of $\tilde T_{l,2}$, we have to multiply $(T^*)^{-1}$ on $T^*M\cap ({\rm span}\ s_l)^\perp$, restricted to appropriate one-dimensional subbundles of $T^*M\cap ({\rm span}\ s_l)^\perp$ by appropriate elements of $\mathfrak{p}(\mathcal{C}_{2})$ resp. by appropriate elements $\mathfrak{p}(\mathcal{C}_{l})$ (or negative powers of it) as we will detail now. We will show that this can be done so that in fact $d\tilde T_{l,2}=0$ and $(\tilde T_{l,2})^*[\cdot, \cdot]_{J_{T, \overline \eta}}=[\cdot, \cdot]_{J_{\overline \tau}}$, while $\tilde T_{l,2}.(\frac{1}{|s_l|^2})^\tau\ast_{J} s_l=(\frac{1}{|s_2|^2})^\eta\ast_{J_{T}} s_2$. For reasons that will be apparent below, we will frequently revert to the language of symplectic reductions in the following (instead of considering symplectic subspaces), in the following thus we will for a coisotropic subspace $W\subset Ann(W)$ often identify the symplectic quotient $W/Ann(W)$ with an appropriate symplectic subspace of a given symplectic space $(V, \omega)$ wrt the projected symplectic form $\omega$ and do not explicitly describe the implicit isomorphism.\\
Consider first the symplectic subbundle $\mathcal{S}_l\subset T^*(M\setminus \mathcal{C}_{l,2})$ given by ${\rm span}(s_l, Js_l)\subset T^*(M\setminus \mathcal{C}_{l,2})$ and denote the restriction $\tilde T_{l,2}|\mathcal{S}_l$ by $\tilde T^0_{l,2}$. By the general Lemma \ref{affine}, $\tilde T^0_{l,2}$ can be chosen to preserve $\omega| \mathcal{S}_l$ but we need a little more so we construct $\tilde T^0_{l,2}$ explicitly. Note that in the situation of Theorem \ref{theorem2}, the first of the conditions (\ref{global}) implies for $s_2:M\rightarrow T^*M$ that $\alpha^-_{J}(s_l^*)=\alpha^-_{J_T}(s_2^*)$ which implies setting $\tilde s_l=\frac{1}{|s_l|^{2\tau}}\ast s_l$ and $\tilde s_2=\frac{1}{|s_2|^{2\eta}}\ast s_2$ that $\omega(\tilde s_l^*, J_{\overline \tau}\tilde s_l^*)=\omega(\tilde s_2^*, J_{T, \overline \eta}\tilde s_2^*)=1$ by the definition of $J, J_{\overline \tau}$ resp. $J_T, J_{T,\overline \eta}$, furthermore in a given local symplectic (unitary) basis $(e_1, J_0e_1)$ of $\mathcal{S}_l$, $\tilde s_l^*$ has the same coordinates as $s_2^*$ in the corresponding basis $(\tilde e_1, J_{T, \eta}\tilde e_1)$ of $\tilde T_{l,2}(\mathcal{S}_l)$, where $\tilde e_1=\tilde T^0_{l,2}.e_1$. From this we can deduce that ${\rm pr}_{\Lambda_{\tilde G, \mathcal{S}_l}}(\tilde s_l)={\rm pr}_{\Lambda_{\tilde G, \mathcal{S}_l}}(\tilde s_2)$, where $\Lambda_{\tilde G,\mathcal{S}_l}\subset \mathcal{S}_l$ is the symplectic reduction of $\Lambda_{\tilde G}$ wrt the coisotropic subspace $S_l=\mathcal{S}_l + \Lambda_{\tilde G}$ in $T^*(M\setminus \mathcal{C}_{l,2})$ ($\perp$ is defined wrt $\omega(\cdot, J_0\cdot)$, the sum is in general non-direct). Analogously ${\rm pr}_{J_{\overline \tau}\Lambda_{\tilde G, \mathcal{S}_l}}(\tilde s_l)={\rm pr}_{J_{T, \overline \eta}\Lambda_{\tilde G, \mathcal{S}_l}}(\tilde s_2)$. By this and setting $\tilde T^0_{l,2}(J_{\overline \tau}\tilde s_l)=J_{T, \overline \eta}(\tilde T^0_{l,2}\tilde s_l)$ finally follows the third equation in (\ref{matrix2}) for the restriction of $[\cdot, \cdot]_{J_{\overline \tau}, \Lambda}$ to $\mathcal{S}_2$, if we require that $\tilde T^*|\mathcal{S}_l$ is defined as the restriction of $\overline T^*$ defined in Lemma \ref{ass3} to $\mathcal{S}_l$ and thus satisfies $Ad(\tilde T^*).J|_{\mathcal{S}_2}=J_T|_{\mathcal{S}_2}$. Note that $\tilde T^0_{l,2}$ defined in the above way preserves $\omega$ and intertwines the (dualized) almost complex structures $J^*_{\overline \tau}$ and $J^*_{T, \overline \eta}$, that is $J^*_{T, \overline \eta}=\tilde T^0_{l,2}\circ J^*_{\overline \tau}\circ \tilde T^0_{l,2}$ on $T^*(M\setminus \mathcal{C}_{l,2})$.\\
We now prove that by the above we get a well-defined and closed map $\hat T^0_{l,2}:\mathcal{M}^{r, \tilde T^0_{l,2}, J_{0}}_\infty(T^*_{\C}M\cap \mathcal{S}_l, \mathcal{C}_{l})\rightarrow \mathcal{M}^{\tilde r, J_{T,0}}_\infty(T_{\C}^*M\cap \mathcal{S}_2, \mathcal{C}_{2})$ with $\mathcal{S}_2\subset T^*(M\setminus \mathcal{C}_{l,2})$ given by $\tilde T^0_{l,2}(\mathcal{S}_l)={\rm span}(s_2, J_{T, \overline \eta}s_2)\subset T^*(M\setminus \mathcal{C}_{l,2})$ for some appropriate pair $r\in (\N^+)^k,\tilde r  \in (\N^+)^{\tilde k}$. For this write $\tilde T^0_{l,2}=\frac{|s_l|^{2\tau}}{|s_2|^{2\eta}}\ast T^0_{l,2}$, where $T^0_{l,2}:\mathcal{M}^{r, T^0_{l,2}, J_0}(T^*_{\C}M\cap \mathcal{S}_l, \mathcal{C}_{l})\rightarrow \mathcal{M}^{\tilde r,J_{T,0}}(T_{\C}^*M\cap  \mathcal{S}_2, \mathcal{C}_{2})$ that is $T^0_{l,2}(s_l)=s_2$ on $M\setminus \mathcal{C}_{l,2}$ for appropriate $r, \tilde r$. Then, if $r, \tilde r$ are so that $s_l|U_i\in\mathfrak{m}^{r_i}_{x_i}\ast_{J_0} \mathcal{E}^0(T^*U_i, U_i\setminus x_i)$ for small nghbhds $U_i$ of $x_i \in \mathcal{C}_l$ resp. $s_2|\tilde U_i\in \mathfrak{m}^{\tilde r_i}_{\tilde x_i}\ast_{J_{T,0}} \mathcal{E}^0(T^*U_i, U_i\setminus \tilde x_i)$ for nghbhds $\tilde U_i$ of $\tilde x_i \in \mathcal{C}_2$ and $s_l, s_2$ are trivial in the respective modules generated by $\mathfrak{m}^{r_i-1}_{x_i}/\mathfrak{m}^{r_i}_{x_i}$ resp. $\mathfrak{m}^{\tilde r_i-1}_{\tilde x_i}/\mathfrak{m}^{\tilde r_i}_{\tilde x_i}$ we have evidently that $T^0_{l,2}\in {\rm End}^r_\omega(T^*M\setminus \pi^{-1}(\mathcal{C}_{l,2}))$ for the above fixed $r \in (\N^+)^k, \tilde r \in (\N^+)^{\tilde k}$ and then the above claim follows, given that $T^0_{l,2}$ is smooth in nghbhds $U_x$ of any $x \in \mathcal{C}_2$ and writing $s \in \Gamma(T^*U_x\cap \mathcal{S}_l, \R)$ as $C^\infty(U_x)$-linear combination of $s_l|U_l$ and $J_0^*s_l|U_l$ we see that $T^0_{l,2}(s)\in \mathcal{M}^{\tilde r}(T_{\C}^*U_x\cap \mathcal{S}_2, \mathcal{C}_{2})$ (multiplication by elements of $C^\infty(U_x)$ does only raise the vanishing order of a section, not lower it). Finally the closedness of $\tilde T^0_{l,2}$ follows from the closedness of $T^0_{l,2}$ and the fact that ${\rm ker}(d|s_l|)={\rm ker}(s_l)$ since $ds_l=0$ and $\nabla J=0$.\\
Before we examine $\tilde T^c_{l,2}:=\tilde T_{l,2}|\mathcal{S}_l^{\perp}$, we note that $\overline T^*: \mathcal{M}^{\tilde r, T^*, J_{T,0}}_\infty(T_{\C}^*M, \mathcal{C}_{2}) \rightarrow \mathcal{M}^{r, J_0}_\infty(T^*_{\C}M, \mathcal{C}_{l})$ can be factorized into the composition $\overline T^*=\tilde T^* \circ |s_2|^2\ast_{J_T}$ with $|s_2|^2\ast_{J_T}:\mathcal{M}^{\tilde r, \overline T^*, J_{T,0}}_\infty(T_{\C}^*M, \mathcal{C}_{2})\rightarrow \mathcal{M}^{\tilde r, \overline T^*, J_{T,0}}(T_{\C}^*M, \mathcal{C}_{2})$ and $\tilde T^*: \mathcal{M}^{\tilde r, \overline T^*, J_{T,0}}(T_{\C}^*M, \mathcal{C}_{2}) \rightarrow \mathcal{M}^{r, J_0}_\infty(T^*_{\C}M, \mathcal{C}_{l})$. Then, over the quotient ring of $R$, $K$, we can define pointwise 'inverse' $(\tilde T^*)^{-1}:= (\tilde T^*)^c \cdot det (\tilde T^*)^{-1}$, where $(\tilde T^*)^c$ is over any $x \in M\setminus \mathcal{C}_l$ the matrix of cofactors of $(\tilde T^*)$, i.e. $(\tilde T^*)^c\cdot (\tilde T^*)=det(\tilde T^*)$. Since $det(\tilde T^*)=det(\overline T^*)$ by the definition of the $R_0$-multiplication $\ast_{J_T}$ in the module $\mathcal{M}^{\tilde r, \overline T^*}_\infty(T_{\C}^*M, \mathcal{C}_{2})$ and by the definition of the matrix of cofactors, we see that with this 'inverse' $(\tilde T^*)^{-1}$ of $\tilde T^*$ over $K_0$ satisfies ${\rm im}((\tilde T^*)^{-1})\subset R_0$ and thus $(\overline T^*)^{-1} \subset \mathcal{M}^{\tilde r, \overline T^*}_\infty(T_{\C}^*M, \mathcal{C}_{2})$.\\
To examine $\tilde T^c_{l,2}=\tilde T_{l,2}|\mathcal{S}_l^{\perp}$ (here $\perp$ refers to the pointwise orthogonal complement wrt $\omega(\cdot, J\cdot)$) consider the symplectic reduction $\mathcal{S}_l^c$ of $T^*(M\setminus \mathcal{C}_{l,2})$ wrt the coisotropic subspace $S^c_l={\rm span}(s_l)+ \mathcal{S}_l^{\perp}$ ($\perp$ here understood as in the previous paragraphs), set $\mathcal{S}_l^0={\rm Ann}(S^c_l)$ and consider the symplectic reduction $\Lambda_{\tilde G, \mathcal{S}^c_l}=(\Lambda_{\tilde G} \cap S^c_l)/\mathcal{S}_l^0$ of $\Lambda_{\tilde G}$ wrt $S^c_l$. We then {\it define}, using the above declarations and for $j \in \{0,1\}$,
\[
\tilde T^c_{l,2}:\mathcal{M}^{r, \mathcal{T}^0_{l,2}}_\infty(T^*_{\C}M\cap \mathcal{S}^c_l, \mathcal{C}_{l})\rightarrow \mathcal{M}^{\tilde r}(T_{\C}^*M\cap \mathcal{S}^c_2, \mathcal{C}_{2})\,\quad \tilde T^c_{l,2}:=(\frac{(|s_l|^2)^{\tau}}{(|s_2|^2)^{\eta}})\ast_{J_{T}}(\overline T^*)^{-1}
\]
over $K_0$, the above considerations show that $\tilde T^c_{l,2}$ has image in $\mathcal{M}^{\tilde r}_\infty(T_{\C}^*M \cap \mathcal{S}^c_2, \mathcal{C}_{2})$. Moreover, $(\overline T^*)^{-1}:\mathcal{M}^{r, T_{l,2}}_\infty(T^*_{\C}M, \mathcal{C}_{l})\rightarrow \mathcal{M}^{\tilde r, T_{l,2},\tilde T^*}_\infty(T_{\C}^*M, \mathcal{C}_{2})$ by the very definitions of the respective $R_0$-modules involved above and thus $\tilde T^c_{l,2}$ is well-defined since the pole order $(\overline T^*)^{-1}$ near $\mathcal{C}_2$ (that is the maximal wrt the natural partial order $\tilde r \in \N^{\tilde k}$ so that ${\rm im}(\overline T^*)^{-1})\subset \mathcal{M}^{\tilde r}_\infty(T_{\C}^*M, \mathcal{C}_{2})$) is determined by $|s_2|^{-2}$.\\
We examine the question of closedness of the thus defined $\tilde T^c_{l,2}$. Assume first that $(M, \omega, J_0)$ is Kaehler and $\nabla$, reduced to $P_{\tilde G}$, is the Levi-Civita connection associated to $(M,\omega, J_0)$ and thus a symplectic torsion free connection satisfying $\nabla J_0=0$, where $J_0$ is the almost complex structure associated to $\hat s^0_l: M\rightarrow \hat P_{G/\tilde G}$ as denoted above. Choose around any $x \in M$ a nghbhd $U\subset M$ and use $\nabla$ to parallel transport a given $O(n)$-frame wrt $(\omega, J, \Lambda_{\tilde G, \mathcal{S}^c_l})$ in $x$, along geodesic curves on $U$ giving an $O(n)$-frame $s_J=(e_1, \dots, e_n,f_1, \dots, f_n):U\rightarrow R^J$ on $U$ so that $(e_1, \dots, e_n)$ spans $\Lambda_{\tilde G, \mathcal{S}^c_l}|U$ (note that parallel transport wrt $\nabla$ preserves $\omega$, $J$ and thus $g_J$ and $\Lambda_{\tilde G, \mathcal{S}^c_l}$) and that coincides at $x$ with the implied Riemannian normal coordinates frame. We then infer that for $\tilde T^*$ at $x \in U$, we can consider $\nabla \tilde T^*(s)=\nabla(\tilde T^*s)-\tilde T^*(\nabla s)$, where $s$ here is a section of the fibre bundle ${\rm Gr}(T^\mathcal{J}_{\C}M^\pm)|U=P_U \times_{Mp(2n,\R)} {\rm Gr}(T^{\mathcal{J}_0}_{\C}\R^{2n})^\pm$, where $P_U=\pi_P^{-1}(U)$ and $\nabla$ is the connection on ${\rm Gr}(T^\mathcal{J}_{\C}M^\pm)$ induced by the symplectic connection on $TM$ associated to $J$ (recall that $J$ is induced by $\hat s_l$), dualized to $T^*M$ using $\omega$. By choosing locally a $d_\nabla$-parallel section $s$ as above, we then infer that $d_\nabla (\tilde T^*|{\rm Im}(\alpha^\pm_J(TM))=0$ in $\Omega^1(M, End_\omega(T_{\C}^*M\cap {\rm Im}(\alpha^\pm_J(TM))))$. Since $\nabla J=0$, we can then infer $d_\nabla (\tilde T^*)=0$ in $\Omega^1(M, End_\omega(T^*M))$. Then antisymmetrization gives the claim $d \tilde T^*=0$ in $\Omega^2(M, TM)$.\\
Note that from the second condition in (\ref{global}) and our assumption on $\hat s_l$ ($s_l$ being exact with primitive $S:M \rightarrow \R$, where $dS$ is diffeomorphic to the graph of a $C^1$-small Hamiltonian diffeomorphism $\Phi:M\rightarrow M$) it follows immediately that if $\hat s_l, \hat s_2$ define a dual pair in the sense of Theorem \ref{theorem2}, then $[\cdot, \cdot]_J|{\rm span}(s_l)$ is preserved by the pair $(\tilde T_{l,2}, \tilde T^*)$ in the sense of the second integrability condition of (\ref{global}). We now claim that if we choose the elements $(g^j_i)$ constructed above so that locally the forms $d(g^j_i) \in \Omega^1(U_i)$ satisfy the second condition of (\ref{global}), then $T_{l,2}$ as constructed above satisfies indeed $(\tilde T_{l,2})^*[\cdot, \cdot]_{\tilde T^*.J}=[\cdot, \cdot]_J$, but this follows essentially from the definitions.\\
We sketch the proofs of the assertions (1)-(3) in the Theorem, beginning with (3). Considering $s_2=\tilde T^0_{l,2}.s_l$ and the fact that over any appropriate $U\subset M$, we can write (cf. \cite{gilkey}, Chapter 2.3) 
\[
\mathfrak{c}_1(M, s_l, \overline T_{l,2})|\mathcal{S}_l=s_l^*\frac{1}{2\pi i}(T^0_{l,2}.dz_J)\wedge (T^0_{l,2}.d\overline z_J)/(1+r^2)^2,\ r=\|{z}_J\|,
\]
where $z_J$ is an appropriate projective coordinate on the complex one dimensional bundle $\pi^{-1}(U)\cap \mathcal{S}_l$ (we choose a coordinate system $\Phi_U: \pi^{-1}(U) \rightarrow U \times \C^{n+1}$ on $\pi^{-1}(U)$ so that $\Phi_U(s(x))=(x, r(x),0, \dots, 0), x \in U$ for some positive function $r: U\rightarrow \R^+$), while analogously $s_2^*(c_1(\mathbb{L},\PI^*M)|\mathcal{S}_l)=s_l^*\frac{1}{2\pi i}\tilde (T^0_{l,2})^*(dz_J\wedge d\overline z_J)/(1+r^2)^2$ and the fact that we can choose $\tilde T^0_{l,2}|\mathcal{S}_l$ so that $\lvert\tilde T^0_{l,2}|\mathcal{S}_l\rvert_J=1$, we can infer
\[
\langle s_2^*c_1(\mathbb{L},\PI^*M),\mathfrak{c}_1(M, s_l, \overline T_{l,2})\rangle= {\rm det}\left[d_{\nabla}(\tilde T^0_{l,2}|\mathcal{S}_l).(\tilde T^0_{l,2})^{-1}|\mathcal{S}_l)\right] \left\lvert s_l^*c_1(\mathbb{L},\PI^*M)\right\rvert^2
\]
where the inverse $(\tilde T^0_{l,2})^{-1}$ is taken over the quotient field $K_0$ of $R_0$ and is here interpreted as an endomorphism of $T^v(T^*U)\simeq T^*U$. We can then invoke well-known formulas on abstract residues (cf. J. Tate, \cite{tate}) to evaluate the right hand side above, this then entails a localization around the critical points of $s_l$, in this case (since $\tilde T^0_{l,2}$ is singular on $\mathcal{C}_l$), analogous arguments hold for the other terms in Assertion (3), where we localize around the elements of $\mathcal{C}_2$. 
The idea here is that each 'closed' point $p \in M$ defines a completion $\hat K_p$ of the ring $K_0:=C^\infty_0(M, \mathcal{C}_{l,2}, \PI)$ (note we slightly shrink $K_0$ relative to the previous notation) by looking at the $\mathfrak{m}$-adic topology defined by the maximal ideal $\mathfrak{m}$ of $p$. $K_0$ naturally embeds into $\hat K_p$ for any $p \in M$, we denote the image of $K_0 \hookrightarrow \hat K_p$ by $K_p$. Analogously we can look at the completions $\hat R_p$ of the rings $R_0$ by $\mathfrak{m}$-adic topology associated too $p \in M$, $\hat R_p$ embeds naturally into $\hat K_p$ and $R_0$ embeds into $\hat R_p$, the image of the latter we denote by $R_p$. We can consider for any finite subset $S\subset M$ (in practice, only $S\subset \mathcal{C}_{l,2}$ will be proven to contribute) the intersection $R_S=\cap_{p\in S} R_p\subset R_0$. We can then define
\[
\hat R_S=\prod_{s \in S} \hat R_p,\ \hat K_S=\prod_{s \in S} \hat K_p
\]
We may then argue that $\hat K_S/(K_0+\hat R_S)$ is finite dimensional over $\C$ and analogously to the arguments in (\cite{tate}) this then implies that for the 'residue map' $res^{K_0}_A:\Omega ^1_{K_0/\C}\rightarrow \C$ on the module $\hat K_S$ over $K_0$ which is as defined in loc. cit. over any $\C$-subspace $A \subset \hat K_S$ it holds that
\[
res^{K_0}_{R_S}(\omega)=res^{\hat K_S}_{\hat R_S}(\omega)=\sum_{p \in S} res^{\hat K_p}_{\hat R_p}(\omega).
\]
Idnetifying this abstract residue map with our above analytic considerations, this 'algebraic localization' method in turn leads to the analytic localization one needs to prove the Assertion (3).\\
To prove that $\chi(R_0/W_{1,1})=0$ in the transversal case and if $\tilde \chi_{J,J_T}=0$, we can represent the class $s_2^*c_1(\mathbb{L},\PI^*M) - s_l^*c_1(\mathbb{L},\PI^*M) \in H^2(M, \Z)$ by an appropriate Borel-Moore cycle, this then entails the assertion using the functoriality of Borel-Moore homology under appropriate continuation mappings derived from the Morse theory of the two functions underlying $s_l$ resp. $s_2$, alternatively we may relate the two residue terms in (3.) which were discussed above in the transveral case to the Euler chracteristic of $M$, this then gives the result by the invariance of the latter, the details will appear in a continuation of this article.
\end{proof}


\begin{thebibliography}{99}

{\small 
\bibitem{audin}  
{\sc M. Audin}: {\it Symplectic geometry in Frobenius manifolds and Quantum cohomology}, Journal of Geometry and Physics 25 (1998)
\bibitem{bravermann}  
{\sc M. Bravermann, V. Silantyev }: {\it Kirwan-Novikov inequalities on a manifold with boundary}, Trans. Amer. Math. Soc. 358 (2006), 3329-3361 
\bibitem{cap}  
{\sc A. Cap, J. Slovak}: {\it Parabolic Geometries I}, Mathematical Surveys and Monographs 154, 2009
\bibitem{cheegersimons}
{\sc J. Cheeger, J. Simons}: {\it Differential characters and geometric invariants}, LNM 1167, pages 50–80. Springer Verlag, (1985)
\bibitem{cru1}  
{\sc A. Crumeyrolle}: {\it Classes des Maslov, fibrations spinorielles symplectiques et transformation des Fourier}, J. Math. pures et appl. 58 (1979)
\bibitem{deligne}
{\sc  P. Deligne}: {\it  Metaplectique. A letter to Kazhdan}, (1982).
\bibitem{deligneflicker}
{\sc  P. Deligne, Y. Flicker}: {\it Counting local systems with principal unipotent local monodromy },  Ann. Mathem. (to appear)
\bibitem{deligne2}
{\sc  P. Deligne, P. Griffith, J. Morgan, D. Sullivan}: {\it Real homotopy theory of Kaehler manifolds}, Invent. Mathem. 29 (1975)
\bibitem{dubrovin}
{\sc  B. Dubrovin}: {\it On almost duality for Frobenius manifolds}, Geometry, topology and mathematical physics, 75-132, Amer. Math. Soc. Transl. Ser. 2, 212 (2004)
\bibitem{eisenbud}
{\sc  D. Eisenbud}: {\it Homological algebra on a complete intersection..}, Trans. of Am. Math Soc. 260 (1980)
\bibitem{farberb}
{\sc M. Farber}: {\it Toplogy of closed $1$-forms}, Mathemat. Surveys and Monographs, Vol.108 (2004)
\bibitem{farber}
{\sc M. Farber}: {\it Zeros of closed 1-forms, homoclinic orbits and Lusternik-Schnirelman theory}, Top. Meth. Nonlin. Anal. Volume 19, 2002, 123–152
\bibitem{fedosov}
{\sc B. Fedosov}: {\it On normal Darboux coordinates}, MPI Bonn, Preprint, 2001
\bibitem{fernan}
{\sc J. Fernandez, G. Pearlstein}: {\it Opposite filtrations, variations of Hodge structure and Frobenius modules}, Aspects of Mathematics Volume 36, 2004, pp 19-43
\bibitem{faltings}
{\sc Faltings}: {\it Stable $G$-bundles and projective connections}, Journ. Alg. Geometry 2 (1993)
\bibitem{friedrich}
{\sc T. Friedrich}: {\it Dirac Operatoren in der Riemannschen Geometrie}, Vieweg 1997
\bibitem{gilkey}
{\sc P. Gilkey}: {\it Invariance theory, the Heat equation, ...}, Publish or Perish, 1984
\bibitem{guillemin}
{\sc V. Guillemin, S. Sternberg}: {\it Geometric Asymptotics}, AMS Math. Surv., No. 14, 1977
\bibitem{habermann}
{\sc  K. Habermann, L. Habermann}: {\it Symplectic Dirac operators}, Lecture Notes in Mathematics, Vol. 1887 (2006)
\bibitem{haberklein}
{\sc  K. Habermann, A. Klein}: {\it  Lie derivative of symplectic spinor fields, metaplectic representation and quantization}, Rostock. Math. Kolloq. 57, 71-91 (2003)
\bibitem{hofer}
{\sc  H. Hofer, I. Ekeland}: {\it Symplectic Topology and Hamiltonian dynamics}, Math. Zeit. 200 355-378 (1989)
\bibitem{hoermander}
{\sc L. Hoermander}: {\it Symplectic classification of quadratic forms and generalized Mehler formulas}, Math. Zeit. 219 (1995)
\bibitem{hutchings}
{\sc  M. Hutchings}: {\it Reidemeister torsion in generalized Morse theory}, Forum Mathem. 14 (2002), 209-244
\bibitem{lax}
{\sc  P. Lax}: {\it Integrals of nonlinear equations of Evolution and Solitary waves}, ACE res and dev report (1968)
\bibitem{leray}
{\sc J. Leray}: {\it Lagrangian Analysis and Quantum Mechanics, a mathematical
structure related to asymptotic expansions and the Maslov index}, MIT Press, Cambridge, Mass., (1981)
\bibitem{lionvergne}
{\sc Lion, M. Vergne}: {\it The Weil representaton, Maslov index and theta series}, Progress in Mathematics Volume 6, 1980 
\bibitem{kost}
{\sc B. Kostant}: {\it Symplectic Spinors}, Symposia Mathematica, vol. XIV (1974)
\bibitem{kleinf}
{\sc  A. Klein, K. Habermann}: {\it A Fourier-transform for symplectic spinors and applications}, Preprint Greifswald University 2002
\bibitem{kleinham}
{\sc  A. Klein}: {\it Hamiltonian spectral invariants, symplectic spinors and Frobenius structures I}, arXiv:1411.4237
\bibitem{kleinham3}
{\sc  A. Klein}: {\it Hamiltonian spectral invariants, symplectic spinors and Frobenius structures III}, to appear.
\bibitem{kleinlag}
{\sc  A. Klein}: {\it Lagrangian intersections, symplectic spinors and Frobenius structures}, to appear
\bibitem{klein4}
{\sc  A. Klein}: {\it bla}, to appear
\bibitem{lafforgue}
{\sc  V. Lafforgue, S. Lysenko}: {\it Geometric Weil representation: local field case}, Comp. Math., Vol. 145, Issue 01 (2009)
\bibitem{menichi}
{\sc  L. Menichi}: {\it Batalin-Vilkovisky algebras and cyclic cohomology of Hopf algebras}, arXiv: math/0311276
\bibitem{mumford}
{\sc D. Mumford, M. Nori, P. Norman}: {\it Tata lectures on theta III}, Birkh\"auser 1991
\bibitem{nerethin}
{\sc Y. Nerethin}: {\it Integral operators with Gaussian kernels and symmetries of canonical commutation relations}, Contemporary mathematical physics',eds. R. L. Dobrushin, R. A. Minlos, M. A. Shubin and A. M. Vershik, Amer. Math. Soc. Transl. Ser. 2, 175, 1996.
\bibitem{oh}
{\sc Yong-Geun Oh}: {\it Construction of spectral invariants of Hamiltonian paths on closed symplectic manifolds}, 
Progress in Mathematics Volume 232, 2005, pp 525-570 
\bibitem{orlov}
{\sc D. Orlov}: {\it Triangulated categories of singularities and D-branes in Landau-Ginzburg models}, 
arXiv:math/0302304
\bibitem{perelmov}
{\sc A.M. Perelmov}: {\it Coherent states and theta functions}, Functional Analysis and Its Applications Volume 6, Issue 4 , pp 292-300 
\bibitem{salamon}
{\sc Salamon/McDuff}: {\it Introduction to symplectic topology}, bla
\bibitem{sternbergwolf}
{\sc Sternberg/Wolf}: {\it Hermitian Lie algebras and metaplectic representation}, Trans. Amer. Math. Soc. Vol 238 (1978)
\bibitem{sternberg}
{\sc S. Sternberg}: {\it The Maslov index and the metaplectic representation}, Lecture notes Harvard University, 2001 
\bibitem{hopf}
{\sc  R. Taillefer}: {\it Hopf algebras and Quivers}, CIMPA school in Curitiba, 2013
\bibitem{tate}
{\sc J. Tate}: {\it Residues of differentials of curves}, Annales Scientifiques de l'ENS (1968)
\bibitem{tondeur}
{\sc Ph. Tondeur}: {\it Affine Zusammenh\"ange auf Mannigfaltigkeiten mit fast-symplektischer Struktur}, Comment. Math. Helv. 36, 234-244 (1961)
\bibitem{viterbo}
{\sc  C. Viterbo}: {\it Symplectic topology as the geometry of generating functions}, Math. Ann. 292 (1992)
\bibitem{whitney}
{\sc  H. Whitney}: {\it Analytic Extensions of..}, Trans. Amer. Math. Soc. 36 (1934)
\bibitem{wallach1}
{\sc  N. Wallach}: {\it Symplectic geometry and Fourier Analysis}, Math. Sci. Press 1977
\bibitem{weil}
{\sc  Andre Weil}: {\it Basic number theory}, Springer 1973
}
\end{thebibliography}
\end{document}